\newcommand \A{{\mathbb A}}
\newcommand \Ad{\operatorname{Ad}}
\newcommand \ad{\operatorname{ad}}
\newcommand \adm{\mathrm{adm}}
\newcommand \al{\alpha}
\newcommand \be{\beta}
\newcommand \bs{\backslash}
\newcommand \C{{\mathbb C}}
\newcommand \CA{\mathcal{A}}
\newcommand \CC{\mathcal{C}}
\newcommand \CD{\mathcal{D}}
\newcommand \CE{\mathcal{E}}
\newcommand \CG{\mathcal{G}}
\newcommand \CL{\mathcal{L}}
\newcommand \CM{\mathcal{M}}
\newcommand \CO{\mathcal{O}}
\newcommand \CP{\mathcal{P}}
\newcommand \diag{\operatorname{diag}}
\newcommand \dist{\operatorname{dist}}
\newcommand \eps{\varepsilon}
\newcommand \ev{\mathrm{ev}}
\newcommand \fin{\mathrm{fin}}
\newcommand \g{\mathfrak g}
\newcommand \Ga{\Gamma}
\newcommand \GL{\operatorname{GL}}
\newcommand \ga{\gamma}
\newcommand \geom{\mathrm{geom}}
\newcommand \h{\mathfrak h}
\newcommand \Hom{\operatorname{Hom}}
\newcommand \ind{\operatorname{ind}}
\newcommand \la{\lambda}
\newcommand \La{\Lambda}
\newcommand \Lie{\operatorname{Lie}}
\newcommand \M{\operatorname M}
\newcommand \m{\mathfrak m}
\newcommand \mqed{\tag*\qedhere}
\newcommand \n{\mathfrak n}
\newcommand \N{{\mathbb N}}
\newcommand \odd{\mathrm{odd}}
\newcommand \ol{\overline}
\newcommand \om{\omega}
\newcommand \Om{\Omega}
\newcommand \p{\mathfrak p}
\newcommand \Per{\operatorname{Per}}
\newcommand \PGL{\operatorname{PGL}}
\newcommand \PO{\operatorname{PO}}
\newcommand \Pol{\operatorname{Pol}}
\newcommand \Q{{\mathbb Q}}
\newcommand \R{{\mathbb R}}
\newcommand \Rep{\operatorname{Rep}}
\newcommand \res{\operatorname{res}}
\newcommand \SL{\operatorname{SL}}
\newcommand \sm{\smallsetminus}
\newcommand \SO{\operatorname{SO}}
\newcommand \spec{\mathrm{spec}}
\newcommand \st{\mathrm{st}}
\newcommand \tr{\operatorname{tr}}
\newcommand \triv{\mathrm{triv}}
\newcommand \ul{\underline}
\newcommand \vol{{\rm vol}}
\newcommand \what{\widehat}
\newcommand \Z{{\mathbb Z}}
\newcommand \z{\mathfrak z}
\renewcommand \1{{\bf 1}}
\renewcommand \a{\mathfrak a}
\renewcommand \H{\operatorname{H}}
\renewcommand \k{\mathfrak k}
\renewcommand \Re{\operatorname{Re}}
\renewcommand \({\left(}
\renewcommand \){\right)}
\newcommand{\e}
[1]{\emph{#1}\index{#1}}
\newcommand{\mat}
[4]{\(\begin{matrix}#1 & #2 \\ #3 & #4\end{matrix}\)}
\newcommand{\norm}
[1]{\left\|#1\right\|}
\newcommand{\smat}
[4]{\(\begin{smallmatrix}#1 & #2 \\ #3 & #4\end{smallmatrix}\)}
\newcommand{\bmat}
[4]{\begin{bmatrix}#1 & #2 \\ #3 & #4\end{bmatrix}}
\newcommand{\sbmat}
[4]{\left[\begin{smallmatrix}#1 & #2 \\ #3 & #4\end{smallmatrix}\right]}
\renewcommand{\sp}
[1]{\left\langle #1\right\rangle}
\newcommand{\tto}
[1]{\stackrel{#1}{\longrightarrow}}
\newtheorem{theorem}{Theorem}[section]
\newtheorem{conjecture}[theorem]{Conjecture}
\newtheorem{lemma}[theorem]{Lemma}
\newtheorem{corollary}[theorem]{Corollary}
\newtheorem{proposition}[theorem]{Proposition}
\theoremstyle{definition}
\newtheorem{definition}[theorem]{Definition}
\begin{document}

\pagestyle{myheadings} \markright{Lefschetz SL(3)}

\title{A prime Geodesic Theorem for $\SL_3(\Z)$}
\author{Anton Deitmar, Yasuro Gon \& Polyxeni Spilioti}
\date{}
\maketitle

{\bf Abstract:} We show a Prime Geodesic Theorem for the group $\SL_3(\Z)$ counting those geodesics whose lifts lie in the split Cartan subgroup.
This is the first arithmetic Prime Geodesic Theorem of higher rank for a non-cocompact group.

$$ $$

\tableofcontents

\newpage
\section*{Introduction}

In this paper we show a Prime Geodesic Theorem for congruence subgroups of $\SL_3(\Z)$.
This is the first example of a Prime Geodesic Theorem for a non-cocompact arithmetic group and geodesics in a Cartan subgroup  of split-rank bigger than 1.
The case of split-rank one has been considered in \cite{class}.
Higher rank cases in a cocompact setting have been considered in \cites{GAFA,classMP,primgeoMP,primgeoII} and for $p$-adic groups in \cite{primgeoBldg}.

We present the Prime Geodesic Theorem for $\SL_3(\Z)$ in a formulation involving class numbers as follows:
Let $O_\R(3)$ denote the set of all orders $\CO$ in totally real number fields $F$ of degree $3$. 
For such an order $\CO$ let $h(\CO)$ be
its class number,
$R(\CO)$ its regulator.
For $\la\in\CO^\times$ let $\rho_1,\rho_2,\rho_3$ denote the real embeddings of $F$ ordered
in a way that $|\rho_1(\la)|\ge |\rho_{2}(\la)|\ge |\rho_{3}(\la)|$. 
Let
$$
\al_1(\la)=\frac{|\rho_1(\la)\rho_3(\la)|}
{|\rho_{2}(\la)|^2},\quad \al_2(\la)=
\(\frac{|\rho_2(\la)|}
{|\rho_{3}(\la)|}\)^2
$$

{\bf Theorem.} (Prime Geodesic Theorem for $\SL_3(\Z)$)
For $T_1,\dots,T_r>0$ set
$$
\vartheta(T)=\sum_{\substack{\CO\in O_\R(3),\ \la\in\CO^\times/\pm
1\\
1<\al_1(\la)\le T_1\\
1<\al_2(\la)\le T_2\\ 
}}  R(\CO)\,
h(\CO),
$$
where $h(\CO)$ is the class number of $\CO$ and $R(\CO)$ its regulator.
Then we have, as $T_1,T_2\to \infty$,
$$
\vartheta(T_1,T_2)\ \sim\ \frac{16}{\sqrt{3}}\,T_1T_2.
$$
We further present a conjectural Lefschetz formula for general congruence arithmetic groups which, if proven, would imply a similar Prime Geodesic Theorem for arbitrary congruence arithmetic groups.

\section{Split groups}
\subsection*{Traces}
Let $G$ be a connected semisimple Lie group with finite center.
Throughout, we assume that $G$ is \e{split}  over the reals, i.e., there exists a split Cartan subgroup $H_{sp}$.
Write $\g$ for the Lie algebra of $G$ and $\g_\C$ for the complexification of $\g$. 
Let $K\subset G$ be a maximal compact subgroup with Cartan involution $\theta$ and let $P_{sp}=M_{sp} A_{sp} N_{sp} $ be a minimal parabolic subgroup such that $A_{sp} M_{sp} $ is $\theta$-stable.
Then $A_{sp} $ is $\theta$-stable and $M_{sp} $ is the centralizer of $A_{sp} $ in $K$.
Further, as $G$ is split, $M_{sp} $ actually is a finite group.

We will normalize all Haar measures of compact groups to volume 1 and all others according to Harish-Chandra's normalization as in \cite{HC-HA1}. Note that this normalizations depends on a choice of an invariant form $B$, i.e., a multiple of the Killing form, which we will consider fixed and only specify later.

We fix an irreducible representation $(\tau,V_\tau)$ of $K$.
Let $E_\tau$ denote the $G$-homogeneous vector bundle over $G/K$ induced by $\tau$.
The smooth sections of the bundle $E_\tau$ may be viewed as smooth functions $f:G\to V_\tau$ satisfying $f(xk)=\tau(k^{-1})f(x)$.
Fix some $0\ne\al\in V_\tau^*$, then $\al\circ f$ is a complex-valued function on $G$ of right $K$-type $\tau$ and the map $f\mapsto \al\circ f$ is a linear isomorphism
$$
\Ga^\infty(E_\tau)\tto\cong C^\infty(G)(*,\tau).
$$
Here we use the following notation: The compact group $K\times K$ acts on $C^\infty(G)$ by 
$$
(k,l)f(x)=f(k^{-1}xl).
$$
Accordingly, the space decomposes into $K$-bitypes
$$
C^\infty(G)=\ol{\bigoplus_{\ga,\tau\in\what K}}C^\infty(G)(\ga,\tau).
$$
We write $C^\infty(G)(*,\tau)$ for the closure of the sum of all $C^\infty(G)(\ga,\tau)$, $\ga\in\what K$.
The group $G$ acts on $C^\infty(G)$ by right translations $R(x)\phi(y)=\phi(yx)$.
For $f\in C_c^\infty(G)$ and any unitary representation $\pi$ of $G$ we write 
$$
\pi(f)=\int_Gf(x)\pi(x)\,dx,
$$
where $dx$ denotes a fixed Haar measure on $G$, which we will normalize later.
If $f$ is of left $K$-type $\tau$, then the operator $R(f)$ preserves the right $K$-type $C^\infty(G)(*,\tau)$.
The restriction of $R(f)$ to this right-$K$-type equals $R(P(f))$ where $P$ is the projection to $C^\infty(G)(*,\tau^*)$, where $\tau^*$ is the representation dual to $\tau$.
Together this means that we may assume $f\in C^\infty_c(G)(\tau,\tau^*)$.

\begin{definition}
Let $\what G$ denote the \e{unitary dual} of $G$, i.e., the set of all irreducible unitary representations up to unitary equivalence.

Let $\what G_\adm$ denote the set of all irreducible admissible representations up to infinitesimal equivalence. By results of Casselman and Harish-Chandra, every $\pi\in \what G_\adm$ can be realized on a Banach space and $\what G$ can be viewed as a subset of $\what G_\adm$.
\end{definition}

\begin{lemma}
Consider the convolution algebra $C^\infty_c(G)(\tau,\tau^*)$. and let $C_\tau$ denote its center.
Then $C_\tau$ equals the set of all $f\in C_c^\infty(G)$ such that
$$
\pi(f)=h_f(\pi)P_{\pi,\tau}
$$
holds for every $\pi\in\what G$, where $h_f(\pi)$ is a scalar and $P_{\pi,\tau}$ is the orthogonal projection onto the $K$-isotype $V_\pi(\tau)$.

If $f\in C_\tau$, the $\pi(f)=h_f(\pi)P_{\pi,\tau}$ also holds for every $\pi\in\what G_\adm$.
\end{lemma}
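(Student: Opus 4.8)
The plan is to characterize the center $C_\tau$ of the convolution algebra $C^\infty_c(G)(\tau,\tau^*)$ via Schur's lemma applied representation by representation. First I would recall that for $f\in C^\infty_c(G)(\tau,\tau^*)$ and any admissible $\pi$, the operator $\pi(f)$ maps $V_\pi$ into the $\tau$-isotype $V_\pi(\tau)$ and annihilates all other $K$-isotypes from the right; concretely $\pi(f)=P_{\pi,\tau}\circ\pi(f)\circ P_{\pi,\tau^*}$... wait, more precisely, left $K$-type $\tau$ forces the image to lie in $V_\pi(\tau)$ and right $K$-type $\tau^*$ forces $\pi(f)$ to vanish on the orthogonal complement of $V_\pi(\tau)$, so $\pi(f)$ is effectively an endomorphism of the finite-dimensional space $V_\pi(\tau)$. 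The assignment $f\mapsto \pi(f)|_{V_\pi(\tau)}$ is an algebra homomorphism $C^\infty_c(G)(\tau,\tau^*)\to \operatorname{End}(V_\pi(\tau))$.

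Next I would argue that an element $f$ is central if and only if $\pi(f)|_{V_\pi(\tau)}$ is a scalar for every $\pi$ in a set large enough to separate the algebra. For the unitary dual $\what G$, the operators $\{\pi(g)|_{V_\pi(\tau)} : g\in C^\infty_c(G)(\tau,\tau^*),\ \pi\in\what G\}$ generate, by a density/Stone--Weierstrass-type argument (using that matrix coefficients of $K$-types $\tau,\tau^*$ are dense in the relevant subspace of $C^\infty_c(G)$, together with the Plancherel theorem or the faithfulness of the regular representation), the full endomorphism algebra $\operatorname{End}(V_\pi(\tau))$ for each $\pi$ — or at least a subalgebra with the same center. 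Hence $f$ commutes with every $g$ iff $\pi(f)|_{V_\pi(\tau)}$ lies in the center of $\operatorname{End}(V_\pi(\tau))$, i.e., is a scalar $h_f(\pi)$. Writing this as $\pi(f)=h_f(\pi)P_{\pi,\tau}$ recovers the displayed formula. Conversely, if $\pi(f)=h_f(\pi)P_{\pi,\tau}$ for all $\pi\in\what G$, then $\pi(f)$ commutes with $\pi(g)$ for all such $g$ and all $\pi\in\what G$, and since the unitary representations separate points of $C^\infty_c(G)$ (the regular representation is a direct integral of irreducibles and is faithful on $C^\infty_c(G)$), $f$ is central.

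For the last assertion — that the identity $\pi(f)=h_f(\pi)P_{\pi,\tau}$ persists for all $\pi\in\what G_\adm$, not just the unitary ones — I would use that every admissible $\pi$ has the same infinitesimal character data and $K$-type structure as it shares with unitary representations in a common "family," but more robustly: $\pi(f)$ restricted to $V_\pi(\tau)$ depends only on the finitely many matrix coefficients of $f$ paired against $K$-finite vectors, and these are governed by the image of $f$ in the Hecke-algebra completion. The spherical-type function $h_f$ is already determined on a Zariski-dense (in the appropriate sense) subset of parameters by its values on $\what G$; since both sides are "analytic" in the continuous parameters of the admissible dual (principal series and their subquotients), equality on the unitary points propagates. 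Concretely I would realize an arbitrary admissible $\pi$ as a subquotient of a (not necessarily unitary) principal series $\pi_{\sigma,\nu}$, observe $\pi_{\sigma,\nu}(f)$ is holomorphic in $\nu$, coincides with $h_f(\pi_{\sigma,\nu})P$ for $\nu$ in the unitary range, hence everywhere by analytic continuation, and then pass to subquotients.

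The main obstacle I anticipate is the surjectivity/generation step: showing that $\{\pi(C^\infty_c(G)(\tau,\tau^*))|_{V_\pi(\tau)}\}$ is all of $\operatorname{End}(V_\pi(\tau))$ (or at least has trivial commutant beyond scalars) uniformly enough to conclude centrality from the scalar condition. This is essentially a density theorem — a $\tau$-isotypic refinement of the fact that the group algebra surjects onto $\operatorname{End}$ of any irreducible — and care is needed because $C^\infty_c(G)$ is not unital and $\pi$ ranges over an infinite family; the cleanest route is probably to invoke the Arthur--Campoli or Dixmier density theorem for $(\g,K)$-modules, or simply to cite that $C_\tau$ is by definition the image of the Bernstein center / the algebra of $G$-invariant differential operators acting through scalars, reducing the lemma to a known structural fact about $\operatorname{End}_G$ of isotypic components.
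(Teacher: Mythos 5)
Your core argument is the same as the paper's: for $\pi\in\what G$ one has $\pi(C_c^\infty(G)(\tau,\tau^*))=P_{\pi,\tau}\,\pi(C_c^\infty(G))\,P_{\pi,\tau}$, this acts irreducibly on $V_\pi(\tau)$, so central elements act by scalars (Schur); conversely, scalarity for all unitary $\pi$ plus the Plancherel theorem (faithfulness of the unitary Fourier transform) gives centrality. The ``main obstacle'' you flag is not really one, and your suggested remedies (Arthur--Campoli, Dixmier) are off target: since $\pi$ is an irreducible unitary representation, $\pi(C_c^\infty(G))$ has trivial commutant, so by von Neumann's density theorem $P_{\pi,\tau}\,\pi(C_c^\infty(G))\,P_{\pi,\tau}$ is dense in, hence (by finite-dimensionality of $V_\pi(\tau)$) equal to, $\operatorname{End}(V_\pi(\tau))$; no refined Paley--Wiener input is needed. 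Where you go beyond the paper is the last assertion: the paper's proof is silent on why $\pi(f)=h_f(\pi)P_{\pi,\tau}$ persists on $\what G_\adm$, whereas your route --- realize an admissible irreducible as a subquotient of a principal series in the compact picture, note holomorphy of $\nu\mapsto\pi_{\sigma,\nu}(f)$ on the fixed $\tau$-isotype, deduce the identity from the generically irreducible unitary points by analytic continuation, then descend to subquotients --- is a correct and genuinely useful supplement. The one point to make precise there is that the set of purely imaginary $\nu$ with $\pi_{\sigma,\nu}$ irreducible has accumulation points (it is the complement of finitely many hyperplanes), so the continuation argument does apply.
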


\begin{proof}
Let $\pi\in\what G$. Then $\pi(C_c^\infty(C)(\tau,\tau^*))=P_{\pi,\tau}\,\pi(C_c^\infty(G))\,P_{\pi,\tau}$.
Therefore, the action of $C_c^\infty(G)(\tau,\tau^*)$ on $V_\pi(\tau)$ is irreducible, hence its center acts by scalars by the Lemma of Schur.
The other way round, let $f\in C_c^\infty(G)(\tau,\tau^*)$ be such that $\pi(f)=h_f(\pi)P_{\pi,\tau}$ holds for every $\pi\in\what G$.
Then $\pi(f)$ commutes with $\pi(h)$ for every $h\in C_c^\infty(G)(\tau,\tau^*)$, hence $f$ lies in the center of $C_c^\infty(G)(\tau,\tau^*)$ by the Plancherel Theorem.
\end{proof}

For a principal series representation $\pi=\pi_{\sigma,\la}$ with $\sigma\in \what M_{sp} $ and $\la\in\a_{sp,\C}^*$ we can compute the scalar $h_f(\pi)=h_f(\sigma,\la)$ as follows.
First we assume that the $K$-type $\tau$ actually occurs in $\pi$, for otherwise the scalar will  be  zero.
Pick a norm one vector $p_0$ in 
$V_{\sigma,\la}(\tau)$, then $p_0$ is a continuous function with $p_0(1)\ne 0$ and, using the Peter-Weyl Theorem, one has $\int_Kf(xk)p_0(yk)\,dk=f(x)p_0(y)$.
With the Iwasawa integral formula one computes
\begin{align*}
h_f(\sigma,\la)p_0(1)&=\pi(f)p_0(1)\\
&=\int_G f(x)\,p_0(x)\,dx\\
&=\int_{A_{sp} N_{sp} K} f(ank)\,a^{\la+\rho}\,p_0(k)\,da\,dn\,dk\\
&=\int_{A_{sp} }f^{N_{sp} }(a)\,a^{\la+\rho}\,da\ p_0(1),
\end{align*}
where $f^{N_{sp} }(a)=\int_{N_{sp} }f(an)\,dn$.
We conclude that the scalar
$$
h_f(\sigma,\la)=\int_{A_{sp} }f^{N_{sp} }(a)\,a^{\la+\rho}\,da
$$
is independent of $\sigma$. We write it as $h_f(\la)$.
By the theory of Knapp-Stein intertwining operators we know that $\pi_{\sigma,\la}\cong\pi_{w\sigma,w\la}$ for $w\in W=W(G,A_{sp})$, so that
$$
h_f(\la)=h_f(w\la),\quad w\in W.
$$

\begin{proposition}\label{prop1.2}
The map $\Phi: f\mapsto f^{N_{sp} }(a)a^\rho$ is an isomorphism of topological algebras  $C_\tau\tto\cong C_c^\infty(A_{sp} )^W$.
\end{proposition}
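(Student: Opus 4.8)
The plan is to factor $\Phi$ through the Fourier--Laplace transform on the vector group $A_{sp}$. For $g\in C_c^\infty(A_{sp})$ write $\CF g(\la)=\int_{A_{sp}}g(a)\,a^\la\,da$, $\la\in\a_{sp,\C}^*$; by the classical Paley--Wiener theorem $\CF$ is a topological isomorphism of $C_c^\infty(A_{sp})$ onto the Paley--Wiener space $\PW(\a_{sp,\C}^*)$, it carries convolution on $A_{sp}$ into pointwise product, and it intertwines the natural $W$-actions. The computation preceding the Proposition says exactly that $\CF\bigl(\Phi(f)\bigr)(\la)=\int_{A_{sp}}f^{N_{sp}}(a)\,a^{\la+\rho}\,da=h_f(\la)$ for $f\in C_\tau$. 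So I would first check that $\Phi$ is a continuous algebra homomorphism with image in $C_c^\infty(A_{sp})^W$: smoothness of $f^{N_{sp}}$ follows by differentiating under the integral, compact support from the fact that $f$ is compactly supported and $A_{sp}N_{sp}$ is closed in $G$, and continuity for the inductive-limit topologies is then routine; multiplicativity follows from the Lemma, since for $f_1,f_2\in C_\tau$ one has $\pi(f_1*f_2)=h_{f_1}(\pi)h_{f_2}(\pi)P_{\pi,\tau}$ (using $P_{\pi,\tau}^2=P_{\pi,\tau}$), hence $h_{f_1*f_2}=h_{f_1}h_{f_2}$, hence $\Phi(f_1*f_2)=\Phi(f_1)*\Phi(f_2)$ by the convolution theorem; and $W$-invariance of $\Phi(f)$ follows from the already recorded identity $h_f(w\la)=h_f(\la)$ together with the $W$-equivariance and injectivity of $\CF$.

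Injectivity of $\Phi$ is then easy. If $\Phi(f)=0$ then $h_f\equiv 0$, so $\pi_{\sigma,\la}(f)=h_f(\la)P_{\pi_{\sigma,\la},\tau}=0$ for every minimal principal series. By Casselman's subrepresentation theorem every $\pi\in\what G_\adm$ is infinitesimally equivalent to a subquotient of some $\pi_{\sigma,\la}$, so $\pi(f)=0$ for all $\pi\in\what G_\adm$, in particular for all $\pi\in\what G$. Since the right regular representation of $G$ on $L^2(G)$ decomposes as a direct integral of irreducible unitary representations, $R(f)=0$, and convolving against an approximate identity gives $f=0$.

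The main obstacle is surjectivity. After applying $\CF$, what must be shown is that $f\mapsto h_f$ maps $C_\tau$ onto the $W$-invariant Paley--Wiener space $\PW(\a_{sp,\C}^*)^W$; this is a Paley--Wiener theorem for the $\tau$-spherical transform. Given $h\in\PW(\a_{sp,\C}^*)^W$ I would form the wave packet $f(x)=\int_{i\a_{sp}^*}h(\la)\,\varphi^\tau_\la(x)\,d\mu(\la)$, where $\varphi^\tau_\la$ is the elementary spherical function of $K$-type $\tau$ attached to $\pi_{\sigma,\la}$ with $\sigma$ an irreducible constituent of $\tau|_{M_{sp}}$ and $d\mu$ the Plancherel density; because $G$ is split and $M_{sp}$ is finite, only minimal principal series enter. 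One then verifies that $f$ has $K$-bitype $(\tau,\tau^*)$, that $\pi_{\sigma',\la'}(f)=h(\la')P_{\pi,\tau}$ for all $\sigma',\la'$ (so $f\in C_\tau$ by the Lemma), and that $\CF(\Phi(f))=h_f=h$ via the identity relating the constant-term (Abel) transform to the $\tau$-spherical transform, whence $\Phi(f)=\CF^{-1}h$. For $\tau=\triv$ this is Harish-Chandra's classical theorem that the Abel transform identifies $C_c^\infty(K\bs G/K)$ with $C_c^\infty(A_{sp})^W$; the statement needed here is its $\tau$-spherical analogue, and the one genuinely analytic point is showing that the wave packet $f$ above actually lies in $C_c^\infty(G)$ --- this is where the exponential type of $h$, Harish-Chandra's estimates for the spherical functions, and the behaviour of the $c$-function are used. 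Continuity of $\Phi^{-1}$ finally follows because $\CF$ and the $\tau$-spherical Paley--Wiener map are both topological isomorphisms.
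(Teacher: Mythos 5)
Your preliminary reductions are sound and agree in substance with the paper's: multiplicativity of $\Phi$ via $h_{f_1*f_2}=h_{f_1}h_{f_2}$, $W$-invariance of the image via $h_f(w\la)=h_f(\la)$ and injectivity of the abelian Fourier transform, and injectivity of $\Phi$ via the vanishing of $\pi(f)$ for all $\pi\in\what G$ (the paper simply invokes the Plancherel theorem where you pass through the subrepresentation theorem and the direct integral decomposition of $L^2(G)$; same substance). The paper also gets continuity of $\Phi^{-1}$ for free from the open mapping theorem.

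The genuine gap is in surjectivity, which is the whole content of the proposition. You reduce it to the assertion that $f\mapsto h_f$ maps $C_\tau$ \emph{onto} the full $W$-invariant Paley--Wiener space $\PW(\a_{sp,\C}^*)^W$, and you propose to establish this by a wave-packet construction, conceding that ``the one genuinely analytic point is showing that the wave packet $f$ actually lies in $C_c^\infty(G)$.'' But that assertion is precisely what must be proved, and for a non-trivial $K$-type it is not a routine ``analogue'' of the Gangolli/Harish-Chandra spherical theorem: by Arthur's Paley--Wiener theorem, the image of the Fourier transform on $K$-finite compactly supported functions is cut out not by $W$-invariance alone but by the Arthur--Campoli relations, which in general impose strictly more conditions. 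The paper's proof of surjectivity consists exactly of (i) citing Arthur's theorem to characterize the image, and (ii) showing that every Arthur--Campoli relation $\sum_k D_kh_f(\la_k)=0$ valid on $C_\tau$ is automatically valid for \emph{all} $W$-invariant smooth functions on $\a_{sp,\C}^*$ --- this is done by multiplying $h_f$ by arbitrary $W$-invariant polynomials through the Harish-Chandra isomorphism ($h_{z_pf}=ph_f$) and then producing, via the functional calculus $\phi(\sqrt{-C})$ and finite propagation speed, elements of $C_\tau$ whose eigenvalue functions approximate constants locally uniformly with all derivatives. Your wave-packet route could in principle be carried out (in the style of van den Ban--Schlichtkrull), but the compact-support estimate for a wave packet built from generalized $\tau$-spherical functions requires the full $c$-function and asymptotic-expansion machinery for non-spherical $K$-types, and the absence of relations beyond $W$-invariance cannot be obtained by citing the spherical case. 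As written, the decisive step is assumed rather than proved.
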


\begin{proof}
Note first that since $h_{f*g}=h_fh_g$, the map $\Phi$ is a homomorphism of convolution algebras.
The map is linear and continuous. We only need to show bijectivity, as the open mapping theorem then implies continuity of the inverse.
For injectivity let $f\in\ker(\Phi)$.
By the above computation we then have $\pi(f)=0$ for every $\pi\in\what G$, so $f=0$ by the Plancherel Theorem.
For surjectivity we employ the Paley-Wiener Theorem for reductive groups by James Arthur \cite{ArthurPW}, see also \cites{vdBSchlicht,VdBSouf}.
In this paper, the Fourier transform of $K$-finite functions in $C_c^\infty(G)$ is described in terms of growth estimates and additional relations, the so-called \e{Arthur-Campoli relations}.
The Weyl group invariance is a special case of those.
We have to show that all Arthur-Campoli relations are satisfied by any function $h$ which is an $A_{sp} $-Fourier transform of some $g\in C_c^\infty(A_{sp} )^W$.
The Arthur-Campoli relations are of the following form: there are tuples of differential operators $D_1,\dots,D_n$ with constant coefficients on $\a_{sp,\C}^*$ and $\la_1,\dots,\la_n\in\a_{sp,\C}^*$, such that 
$$
\sum_{k=1}^n D_kh_f(\la_k)=0
$$
holds for every $f\in C_\tau$.
Here the differential operator $D_k$ acts with respect to $\la_k$.
We claim that if such a relation holds for all $h_f$ with $f\in C_\tau$, then it  holds for every $h\in C^\infty(\a_{sp,\C}^*)^W$.
Let $p\in\Pol(\a_{sp,\C}^*)^W$ be a Weyl-group invariant polynomial and let $z_p$ be the corresponding differential operator in $\z$ given by the Harish-Chandra isomorphism. Then one has $h_{z_pf}=ph_f$, so that $\sum_{k=1}^nD_k(p(\la_k)h_f(\la_k))=0$.
The Casimir-Element $C\in U(\g)$ induces an differential operator of order 2 on $E_\tau$. For an even Paley-Wiener function $\phi$ on $\C$ the functional calculus $\phi(\sqrt{-C})$ gives a smoothing operator of finite propagation speed \cite{CGT}, which by $G$-invariance is given by convolution with some $f\in C_\tau$.
Varying $\phi$ such that its support shrinks to zero, the corresponding eigenvalue functions $h_f$ tend to a non-zero constant locally uniformly with all derivatives. Hence it follows that we get $\sum_{k=1}^nD_k(p(\la_k))=0$ for every Weyl group invariant polynomial $p$.
Another approximation shows that $\sum_{k=1}^nD_k(h(\la_k))=0$ for every $h\in C^\infty(\a_{sp,\C}^*)^W$ as claimed.
\end{proof}

\begin{definition}
Let $B$ denote a fixed positive multiple of the Killing form on $\g$ and $\theta$ the Cartan involution fixing $K$ pointwise.
The form $\sp{X,Y}=-B(\theta(X),Y)$ is positive definite on $\g$ and induces a $G$-invariant Riemannian metric on $G/K$.
Let $\dist(x,y)$ denote the corresponding distance function and let $d(g)=\dist(gK,eK)$ for $g\in G$.

Let $U(\g_\C)$ denote the universal enveloping algebra of $\g_\C$.
Every element $X$ of $U(\g_\C)$ gives rise to a left-invariant differential operator, written $h\mapsto h*X$, and a right-invariant differential operator, written $h\mapsto X*h$.
Recall that for $p>0$ the $L^p$-Schwartz space $\CC^p(G)$ is defined as the space of all $h\in C^\infty(G)$ such that, for every $n\in\N$ and $X,Y\in U(\g_\C)$ the seminorm
$$
|h|_{p,n,X,Y}=\sup_{g\in G}|X*h*Y(g)|\ \Xi(g)^{-2/p}(1+d(g))^n
$$
is finite. Here $\Xi$ is the basic spherical function,
$$
\Xi(x)=\int_K \underline a(kx)^\rho\,dk,
$$
where $\underline a(x)$ is the $A_{sp}$-part of the $A_{sp} N_{sp} K$-decomposition of $x\in G$.
It suffices for our purposes to know that there are $r_1>r_2>0$ such that $e^{-r_1d(g)}\le \Xi(g)\le e^{-r_2d(g)}$.
If we complete the space $\CC^p(G)$ with respect to the seminorms involving only derivatives up to order $k$, we obtain a Banach space $\CC_k^p(G)$.
We write
$$
\CC_k^p(K\bs G/K)=\CC_k^p(G)\cap C(K\bs G/K)
$$
for the set of $K$-bi-invariant elements of $\CC_k^p(G)$.
\end{definition}

\begin{definition}
Let $L^1(G,\Xi)$ denote the $L^1$ space with respect to the measure $\Xi(x)dx$, where $dx$ denotes the Haar measure on $G$.
Further, let $L^1_0(G,\Xi)$ denote the space of $K$-bi-invariant functions in $L^1(G,\Xi)$.

Let $N\in\N$ and let $\CC_{N}(A_{sp})^W$ denote the space of all $N$-times continuously differentiable, $W$-invariant functions $g$ on $A_{sp}$ such that for every invariant differential operator $D$ of order $\le N$ one has 
$$
Dg(\exp(X))=O(\norm X^{-N}).
$$
for some fixed norm on $\a_{sp}$.
\end{definition}

\begin{proposition}\label{prop1.5}
\begin{enumerate}[\rm (a)]
\item The map $\Phi:f\mapsto f^{N_{sp}}(a)a^\rho$ is an isomorphism of topological algebras
$$
L^1_0(G,\Xi)\tto\cong L^1(A_{sp})^W.
$$
\item Let $N\in\N$ be large enough. Then $\CC_{N}(A_{sp})^W$ is a subset of $L^1(A_{sp})^W$ and  for every $g\in \CC_{N}(A_{sp})^W$ its inverse image $f=\Phi^{-1}(g)$ lies in $\CC_N^1(G)$.
\end{enumerate}
\end{proposition}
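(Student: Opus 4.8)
I would derive both assertions from Proposition~\ref{prop1.2}. Specialised to the trivial $K$-type it says that $\Phi$ restricts to an isomorphism of the cores $C_c^\infty(K\bs G/K)\tto\cong C_c^\infty(A_{sp})^W$, and it is already recorded there that $\Phi$ is an algebra map, $\Phi(f*g)=\Phi f*\Phi g$ for the Euclidean convolution on $A_{sp}$ (this is $h_{f*g}=h_fh_g$). Since $C_c^\infty(K\bs G/K)$ is dense in $L^1_0(G,\Xi)$ and $C_c^\infty(A_{sp})^W$ is dense in $L^1(A_{sp})^W$, everything reduces to showing that on the cores the two $L^1$-norms are equivalent under $\Phi$; the homomorphism property, and the $W$-invariance of the image (which on the core is $h_f(\la)=h_f(w\la)$), then pass to the completions.

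One of the two inequalities is easy: $\|\Phi f\|_{L^1(A_{sp})}\le\|f\|_{L^1(G,\Xi)}$, and in fact for every $f\in L^1_0(G,\Xi)$. Insert $\Xi(x)=\int_K\underline a(kx)^\rho\,dk$ into $\|f\|_{L^1(G,\Xi)}$; by left $K$-invariance of $|f|$ the substitution $y=kx$ turns this into $\int_G|f(y)|\,\underline a(y)^\rho\,dy$; writing $y=ank$ in the $A_{sp}N_{sp}K$-decomposition and using the Iwasawa integration formula together with right $K$-invariance one obtains $\int_{A_{sp}}\int_{N_{sp}}|f(an)|\,a^\rho\,dn\,da$, which dominates $\int_{A_{sp}}|f^{N_{sp}}(a)|\,a^\rho\,da=\|\Phi f\|_{L^1(A_{sp})}$ — with equality whenever $f\ge0$. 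Thus $f\mapsto\Phi f$ is automatically continuous from $L^1_0(G,\Xi)$ into $L^1(A_{sp})^W$, and the bound $\|g\|_{L^1(A_{sp})}\le\|\Phi^{-1}g\|_{L^1(G,\Xi)}$ holds on the core.

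The content of part~(a) is therefore the reverse bound: a constant $C$, independent of $g\in C_c^\infty(A_{sp})^W$ (in particular of its support), with $\|\Phi^{-1}g\|_{L^1(G,\Xi)}\le C\,\|g\|_{L^1(A_{sp})}$; this is a quantitative boundedness statement for the inverse Abel transform on these weighted $L^1$-spaces, and I expect it to be the main obstacle. Concretely it says that the cancellation in $\int_{N_{sp}}f(an)\,dn$ versus $\int_{N_{sp}}|f(an)|\,dn$ is, globally in $a$, at most a bounded factor for $f$ of the form $\Phi^{-1}g$. I would prove it either from a more explicit description of the inverse Abel transform — for which the split case, where all restricted roots have multiplicity one, is amenable (shift-operator type formulae) — or by invoking the $L^1$-endpoint of the Trombi-Varadarajan/Anker theory of the spherical and Abel transforms; heuristically the weight $\Xi\asymp a^{-\rho}$ (up to polynomials) is matched against the Cartan Jacobian $\asymp a^{2\rho}$, so that the weighted mass of $\Phi^{-1}g$ stays comparable to $\|g\|_{L^1(A_{sp})}$. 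Once this estimate is available, the two cores are isomorphic under $\Phi$ with equivalent $L^1$-norms, and $\Phi$ extends to the asserted isomorphism of topological algebras.

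For part~(b), note first that a $g\in\CC_N(A_{sp})^W$ in particular satisfies $g(\exp X)=O(\|X\|^{-N})$ (the order-$0$ case of the defining estimate), which is integrable on $A_{sp}\cong\R^r$ once $N>r=\dim\a_{sp}$; hence $\CC_N(A_{sp})^W\subset L^1(A_{sp})^W$ and $f:=\Phi^{-1}g$ is defined by part~(a). To put $f$ in $\CC_N^1(G)$ one must bound $\sup_x|X*f*Y(x)|\,\Xi(x)^{-2}(1+d(x))^n$ for $\deg X,\deg Y\le N$ and all $n$. For a $K$-bi-invariant function these seminorms are controlled — by Harish-Chandra's analysis of the $K$-bi-invariant Schwartz space (elliptic regularity for the Casimir, plus the radial-part reduction to $A_{sp}$) — by the seminorms with $X*f*Y$ replaced by $z*f$, $z\in\z$. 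Since $\Phi$ intertwines the action of $\z$ with $W$-invariant constant-coefficient operators on $A_{sp}$ (this is $h_{z_pf}=ph_f$), one has $z*f=\Phi^{-1}(D_zg)$ with $D_zg$ still in $\CC_{N'}(A_{sp})^W$, $N'$ large when $N$ is, so everything comes down to the single pointwise estimate
$$|\Phi^{-1}\psi(x)|\ \le\ C_N\,\Xi(x)^2\,(1+d(x))^{-n}\qquad(\psi\in\CC_N(A_{sp})^W,\ N\gg n).$$
This I would read off the inversion formula: for $d(x)\asymp\|H\|$ the value $\Phi^{-1}\psi(x)$ is an average of $\psi$ over the radius-$\asymp\|H\|$ sphere in $\a_{sp}$ against the inverse-Abel kernel, the Plancherel ($c$-function) density producing the weight $a^{-2\rho}\asymp\Xi(x)^2$ and the $\|X\|^{-N}$-decay of $\psi$ the factor $(1+d(x))^{-n}$. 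As in part~(a), the genuine difficulty is the quantitative control of $\Phi^{-1}$ on these non-Schwartz $L^1$-spaces; this is the $p=1$ instance of the Trombi-Varadarajan-type description of the Abel transform.
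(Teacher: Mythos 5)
Your setup agrees with the paper up to the point where the work begins: the forward inequality $\|\Phi f\|_{L^1(A_{sp})}\le\|f\|_{L^1(G,\Xi)}$, with equality for $f\ge 0$, the homomorphism property, and the reduction to dense cores are all correct and are exactly the paper's first half. But the step you yourself call ``the main obstacle'' --- a uniform reverse bound $\|\Phi^{-1}g\|_{L^1(G,\Xi)}\le C\,\|g\|_{L^1(A_{sp})}$ on the core --- is never actually proven: you defer it to ``shift-operator type formulae'' or ``the $L^1$-endpoint of the Trombi--Varadarajan/Anker theory'' plus a heuristic matching of $\Xi\asymp a^{-\rho}$ against the Cartan Jacobian $a^{2\rho}$. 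That estimate is the entire content of part (a), so as written the argument has a hole precisely where the difficulty lies; note also that for a split group the restricted roots have odd multiplicity, so the inverse Abel transform involves genuine fractional integration and the weighted $L^1$-endpoint is not a routine citation. The same deferral occurs in (b), where the pointwise bound $|\Phi^{-1}\psi(x)|\le C\,\Xi(x)^2(1+d(x))^{-n}$ is ``read off'' an inversion formula that is not produced.

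The paper avoids the hard estimate in (a) altogether by a soft argument you should consider: $\Phi$ is a continuous injection of Banach algebras (injectivity by Plancherel as in Proposition \ref{prop1.2}, continuity by your forward inequality), and since the norm computation is an equality for $f\ge 0$, the image of $\Phi$ --- which contains $C_c^\infty(A_{sp})^W$ by Proposition \ref{prop1.2} --- is closed under monotone limits and hence contains every positive element of $L^1(A_{sp})^W$; surjectivity follows, and continuity of $\Phi^{-1}$ is then free from the open mapping theorem. No quantitative inverse-Abel bound is needed. For (b) the paper likewise argues directly rather than through an inversion kernel: by $K$-bi-invariance one may take $X,Y\in U(\a_{sp}\oplus\n_{sp})$ in the seminorms, the $\n_{sp}$-derivatives are absorbed by the $N_{sp}$-integral defining $\Phi(f)$, and the $\a_{sp}$-derivatives translate into the derivatives of $g$ controlled by the hypothesis $g\in\CC_N(A_{sp})^W$. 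Either adopt this route or supply a genuine proof of the inverse-Abel estimates you invoke; as it stands the proposal is a correct reduction followed by an unproven claim.
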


\begin{proof}
(a) For $f\in L^1_0(G,\Xi)$ we estimate the $L^1$-norm as
\begin{align*}
\norm{\Phi(f)}&=\int_{A_{sp}}|f^{N_{sp}}(a)|\,a^\rho\,da
=\int_{A_{sp}}\left|\int_{N_{sp}}f(an)\,dn\right|\,a^\rho\,da\\
&\le\int_{A_{sp}}\int_{N_{sp}}|f(an)|a^\rho\,dn\,da
=\int_{A_{sp}}\int_{N_{sp}}\int_K|f(ank)|a^\rho\,dk\,dn\,da\\
&=\int_G|f(x)|\ul a(x)^\rho\,dx
=\int_K\int_G|f(k^{-1}x)|\ul a(x)^\rho\,dx\,dk\\
&=\int_K\int_G|f(x)|\ul a(kx)^\rho\,dx\,dk=\int_G|f(x)|\Xi(x)\,dx.
\end{align*}
So $\Phi$ is a continuous homomorphism of Banach algebras. It is injective for the same reason as in Proposition \ref{prop1.2}. 
In the above estimate, note that we have equality throughout, if $f\ge 0$. The image of $\Phi$ contains $C_c^\infty(A_{sp})^W$ and therefore all monotonous limits of such functions, so it contains all positive functions in $L^1(A_{sp})^W$ and hence $\Phi$ is surjective.

(b)
As $f$ bi-invariant under $K$, in the expression $X*f*Y$ it suffices to consider $X,Y\in U(\a_{sp}\oplus\n_{sp})$.
The $\n$-derivatives are killed by the integral in $\Phi(f)(a)=\int_{N_{sp}}f(an)\,dn\,a^\rho$ and the $\a$-derivatives translate to derivatives of $g=\Phi(f)$.
This yields the claim.
\end{proof}

\subsection*{Orbital integrals}
 
Let $H$ be a $\theta$-stable Cartan subgroup of $G$, let
$\h_\C$ be its complex Lie algebra and let $\Phi =\Phi(\g_\C,\h_\C)$ be the
set of roots. Let $x\to x^c$ denote the complex conjugation on
$\g_\C$ with respect to the real form $\g$. Choose an
ordering $\Phi^+\subset \Phi$ and let $\Phi^+_I$ be the set of
positive imaginary roots. To any root $\alpha\in\Phi$ let
\begin{eqnarray*}
H &\to & \C^\times\\
 h &\mapsto & h^\alpha
\end{eqnarray*}
be its character, that is, for $X\in\g_\alpha$ the root space to
$\alpha$ and any $h\in H$ we have $\Ad(h)X=h^\alpha X$. Now put
$$
\Delta_I(h)= \prod_{\alpha\in\Phi_I^+}(1-h^{-\alpha}).
$$
Let $H=AT$ where $A$ is the connected split component and $T$ is
compact. An element $at\in AT=H$ is called {\it split
regular} if the centralizer of $a$ in $G$
equals the centralizer of $A$ in $G$. The split regular elements
form a dense open subset containing the regular elements of $H$.
Choose a parabolic $P$ with split component $A$, so $P$ has
Langlands decomposition $P=MAN$. For $at\in AT =H$ let
\begin{eqnarray*}
\Delta_+(at) &=& \left| \det((1-\Ad((at)^{-1}))|_{\g /\a\oplus
\m})\right|^{\frac1{2}}\\ {}\\
 & =& \left|\det((1-\Ad((at)^{-1}))|_\n)\right| a^{\rho_P}\\ {}\\
 &=&
 \left|\prod_{\alpha\in\Phi^+\sm\Phi_I^+}(1-(at)^{-\alpha})\right|
 a^{\rho_P},
\end{eqnarray*}
where $\rho_P$ is the half of the sum of the roots
in $\Phi(P,A)$, i.e., $a^{2\rho_P}=\det(a|\n)$. We will also write
$h^{\rho_P}$ instead of $a^{\rho_P}$.

For any $h\in H^{reg} = H\cap G^{reg}$ let
$$
F_f^H(h)= F_f(h)= \Delta_I(h) \Delta_+(h) \int_{G/A}
f(xhx^{-1}) dx.
$$
Then for $h\in
H^{reg}$ one has
$$
\CO_h(f) = \frac{F_f^H(h)}
                 {h^{\rho_P}\det(1-h^{-1}|(\g/\h)^+)},
$$
where $(\g/\h)^+$ is the sum of the root spaces attached to
positive roots. There is an extension of this identity to
non-regular elements as follows: For $h\in H$ let $G_h$ denote its
centralizer in $G$. Let $\Phi^+(\g_h,\h)$ be the set of positive
roots of $(\g_h,\h)$. Let
$$
\varpi_h = \prod_{\alpha\in\Phi^+(\g_h,\h)}Y_\alpha,
$$
then $\varpi_h$ defines a left invariant differential operator on
$G$.

\begin{lemma}\label{lem1.3}
For $f\in \CC_N^1(G)$ and $h\in H$ we have
$$
\CO_h(f) = \frac{\varpi_h F_f^H(h)}
                 {h^{\rho_P}\det(1-h^{-1}|(\g/\g_h)^+)}.
$$
\end{lemma}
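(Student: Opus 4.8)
The plan is to deduce this from the regular case --- the displayed formula just above --- by Harish-Chandra's method of semisimple descent; at bottom it is his limit formula for the orbital integral at a non-regular semisimple element. I would use throughout Harish-Chandra's theory of the invariant integral $F_f^H$, which is available for $f$ in the $L^1$-Schwartz space and hence for $f\in\CC_N^1(G)$ once $N$ is in the range the lemma tacitly assumes; in any event both sides of the asserted identity are continuous in $f$ for the $\CC_N^1(G)$-topology, so one may reduce to $f\in C_c^\infty(G)$ by density.

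Fix $h\in H$ and put $\Phi_h=\Phi(\g_h,\h)=\{\alpha\in\Phi:h^\alpha=1\}$, with $\Phi_h^+=\Phi_h\cap\Phi^+$, the positive system used to form $\varpi_h$. Since every element of the centralizer $G_h$ commutes with $h$, the semisimple element $h$ is central in the reductive group $G_h$, whose roots relative to $\h$ form $\Phi_h$. I would then argue in three steps. (i) \emph{Descent.} For $h'=h\exp X$ with $X\in\h$ small one has $h'\in G_h$; decomposing $x\in G$ along $G=G_h\cdot S$ for a transversal $S$ to $G_h$-conjugacy, the orbit map $y\mapsto yh'y^{-1}$ cuts out a slice through $h'$ in $G_h$-conjugacy classes, and the Jacobian of this change of variables accounts, together with the $\Delta_I\Delta_+$-factors, for the factor $h^{\rho_P}\det(1-h^{-1}|(\g/\g_h)^+)$; carrying this out relates $F_f^H$ on $G$ near $h$ to the invariant integral of a descended function $f_h$ on $G_h$ and reduces the claim to the analogous statement in $G_h$. (ii) \emph{The central case.} In $G_h$ the element $h$ is central, so the orbital integral collapses to $\CO_h^{G_h}(f_h)=f_h(h)$, and what is left is Harish-Chandra's limit formula writing $f_h(h)$ as a constant times $\varpi_h F_{f_h}^H(h)$, the operator $\varpi_h=\prod_{\alpha\in\Phi_h^+}Y_\alpha$ now being built from root vectors transverse to $\h$ inside $\g_h$. (iii) \emph{Bookkeeping.} One checks that the constants from (i) and (ii) combine to leave exactly the denominator in the statement.

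The hard part is the analytic behaviour of $F_f^H$ near a singular point, which is what powers the limit formula of step (ii): its one-sided smoothness across the singular hyperplanes $\{\alpha(X)=0\}$, $\alpha\in\Phi_h$, finiteness of the transverse derivatives, and the explicit identification of the leading term --- facts that rest on the interplay between $F_f^H$, the Plancherel decomposition and the discrete series. Step (i) is more routine but still technical: the change of variables must be set up with the correct invariant measures and the $\Delta_I\Delta_+$-factors tracked carefully (one could instead fuse (i)--(ii) into a repeated l'Hospital computation in $X$, with $\varpi_h$ supplying the finite value, but the analytic input is the same). I would cite Harish-Chandra's papers on invariant eigendistributions and on harmonic analysis on real reductive groups, Varadarajan's exposition, and the Schwartz-space refinements of Harish-Chandra and Bouaziz, rather than reprove that machinery here.
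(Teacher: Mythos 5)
Your outline is exactly the content of the result the paper invokes: the published proof of this lemma is a one-line citation to Section 17 of Harish-Chandra's \emph{Discrete series II}, where the identity is obtained precisely by the descent-to-$G_h$ plus limit-formula argument (with $\varpi_h$ supplying the transverse derivatives) that you sketch. So you are taking essentially the same route, just spelling out what the citation contains; deferring the analytic input on $F_f^H$ near singular points to Harish-Chandra, as you propose, is what the paper does as well.
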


\begin{proof}
This is proven in section 17 of \cite{HC-DS}. 
\end{proof}

Our aim is to express orbital integrals in terms of traces of
representations. By the above lemma it is enough to express
$F_f(h)$ it terms of traces of $f$ when $h\in H^{reg}$. For this
let $H_1=A_1T_1$ be another $\theta$-stable Cartan subgroup of $G$
and let $P_1=M_1A_1N_1$ be a parabolic with split component $A_1$.
Let $K_1=K\cap M_1$. Since $G$ is connected the compact group
$T_1$ is an abelian torus and its unitary dual $\widehat{T_1}$ is
a lattice. The Weyl group $W=W(M_1,T_1)$ acts on $\widehat{T_1}$
and $\widehat{t_1}\in\widehat{T_1}$ is called {\it regular} if its
stabilizer $W(\widehat{t_1})$ in $W$ is trivial. The regular set
$\widehat{T_1}^{reg}$ modulo the action of $W(K_1,T_1)\subset
W(M_1,T_1)$ parameterizes the discrete series representations of
$M_1$ (see \cite{Knapp}). For $\widehat{t_1}\in\widehat{T_1}$
Harish-Chandra \cite{HC-S} defined a distribution
$\Theta_{\widehat{t_1}}$ on $G$ which happens to be the trace of
the discrete series representation $\pi_{\widehat{t_1}}$ attached
to $\widehat{t_1}$ when $\widehat{t_1}$ is regular. When
$\widehat{t_1}$ is not regular the distribution
$\Theta_{\widehat{t_1}}$ can be expressed as a linear combination
of traces as follows. Choose an ordering of the roots of
$(M_1,T_1)$ and let $\Omega$ be the product of all positive imaginary roots.
For any $w\in W$ we have $w\Omega = \eps_I(w)\Omega$ for a
homomorphism $\eps_I : W\to \{ \pm 1\}$. For non-regular
$\widehat{t_1}\in\widehat{T_1}$ we get
$\Theta_{\widehat{t_1}}=\frac1{|W(\widehat{t_1})|}\sum_{w\in
W(\widehat{t_1})}\epsilon_I(w)\Theta'_{w,\widehat{t_1}}$, where
$\Theta'_{w,\widehat{t_1}}$ is the character of an irreducible
representation $\pi_{w,\widehat{t_1}}$ called a limit of discrete
series representation. We will write $\pi_{\widehat{t_1}}$ for the
virtual representation $\frac1{|W(\widehat{t_1})|}\sum_{w\in
W_{\widehat{t_1}}}\epsilon_I(w)\pi_{w,\widehat{t_1}}$.
Further let $\rho_I$ be $1/2$ times the sum of positive imaginary roots.

Let $\nu :a\mapsto a^\nu$ be a unitary character of $A_1$ then
$\widehat{h_1}=(\nu,\widehat{t_1})$ is a character of
$H_1=A_1T_1$. Let $\Theta_{\widehat{h_1}}$ be the character of the
representation $\pi_{\widehat{h_1}}$ induced parabolically from
$(\nu,\pi_{\widehat{t_1}})$. Harish-Chandra has proven

\begin{theorem}\label{inv-orb-int}
Let $H_1,\dots,H_r$ be a maximal a set of non-conjugate
$\theta$-stable Cartan subgroups with split components $A_1,\dots,A_r$. Let $H=H_j$ for some $j$ with split component $A$. Then
for each $j$ there exists a continuous function $\Phi_{H|H_j}$ on
$H^{reg}\times \hat{H_j}$ such that for $h\in H^{reg}$ it holds
$$
F_f^H(h)= \sum_{j=1}^r
\int_{\hat{H_j}}\Phi_{H|H_j}(h,\widehat{h_j})\
\tr\pi_{\widehat{h_j}}(f)\ d\widehat{h_j}.
$$
Further $\Phi_{H|H_j}=0$ unless there is $g\in G$ such that
$gAg^{-1}\subset A_j$. Finally for $H_j=H$ the function can be
given explicitly as
\begin{eqnarray*}
\Phi_{H|H}(h,\hat{h}) &=& \frac1{|W(G,H)|}\sum_{w\in
W(G,H)}\eps_I(w)\langle \hat{h},w^{-1}h\rangle h^{w\rho_I-\rho_I}
\end{eqnarray*}
where we recall that, although the modular shift $\rho$ need not be a character of $H$, the difference $w\rho_I-\rho_I$ always is.
\end{theorem}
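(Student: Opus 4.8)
This is in substance Harish-Chandra's Fourier inversion formula for invariant orbital integrals, one of the ingredients of his Plancherel theorem, and the plan is to assemble it from the properties of the invariant integral $F_f^H$ together with the explicit character formulas for tempered representations. First I would record that, thanks to the normalising factors $\Delta_I$ and $\Delta_+$, the function $F_f^H$ extends from the regular set to a function on $H^{reg}$ which is smooth, lies in a Harish-Chandra Schwartz space along the vector part $A$, satisfies the usual jump relations across the singular hyperplanes, and depends continuously on $f\in\CC_N^1(G)$. Writing $H=AT$ with $A$ the split component and $T$ the compact part, one then performs Euclidean Fourier analysis along $A$ and ordinary Fourier analysis on the torus $T$; this exhibits $F_f^H(h)$ as a superposition over $\widehat H$ of its Fourier coefficients, with the residual jump data carried by the more split Cartan subgroups. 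The crux is to match these coefficients, up to the transfer factors $\Phi_{H|H_j}$, with the tempered character values $\tr\pi_{\widehat{h_j}}(f)$.

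For the diagonal term $H_j=H$ I would compute the restriction of the character $\Theta_{\widehat h}$ of $\pi_{\widehat h}$ to $H^{reg}$ directly. Since $\pi_{\widehat h}$ is parabolically induced from $(\nu,\pi_{\widehat t})$ with $\pi_{\widehat t}$ a (limit of) discrete series of $M$, one combines Harish-Chandra's discrete series character formula for $M$ restricted to the compact Cartan $T\subset M$ — the Weyl numerator over $\Delta_I^M$ — with the character formula for a parabolically induced representation restricted to a Cartan subgroup. The outcome is that on $H^{reg}$
$$
\Theta_{\widehat h}(h')=\frac{\sum_{w\in W(G,H)}\eps_I(w)\,\sp{\widehat h,w^{-1}h'}\,(h')^{w\rho_I-\rho_I}}{|W(G,H)|\,(h')^{\rho_P}\det\big(1-(h')^{-1}\,|\,(\g/\h)^+\big)},
$$
and the denominator here equals, up to the normalisation built into $F_f^H$, exactly $\Delta_I(h')\Delta_+(h')$. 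Substituting $\tr\pi_{\widehat h}(f)=\Theta_{\widehat h}(f)$ and carrying out the Fourier inversion on $A$ and on $T$ then returns the closed formula claimed for $\Phi_{H|H}$; the factors $(h')^{w\rho_I-\rho_I}$, which are genuine characters of $H$ because $w\rho_I-\rho_I$ always is, are precisely the $\rho$-corrections forced by that normalisation.

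It remains to address the off-diagonal terms and the vanishing assertion. A character $\Theta_{\widehat{h_j}}$ induced from a discrete series of $M_j$ enters the Fourier expansion of $F_f^H$ only through the Cartan subgroups into which $H$ embeds after conjugation, and an orthogonality argument among the various tempered families on $H^{reg}$ shows that the transfer factor $\Phi_{H|H_j}$ vanishes unless $A$ is $G$-subconjugate to $A_j$; informally, $F_f^H$ only sees representations at least as continuous as those attached to $H$ itself, so that for $H=H_{sp}$ only the principal series survive, which is the case used in this paper. The main obstacle — and the reason one ultimately invokes Harish-Chandra's machinery rather than arguing everything by hand — is analytic: one must control the convergence of the integrals over the non-compact duals $\widehat{H_j}$, establish the continuity of each $\Phi_{H|H_j}(h,\cdot)$, and deal with the singular parameters $\widehat{t_j}$, for which discrete series are replaced by the virtual limits $\pi_{w,\widehat{t_j}}$. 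All of this is contained in \cite{HC-HA1}; see also \cite{Knapp} and \cite{HC-S}.
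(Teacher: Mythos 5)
Your proposal is correct and follows essentially the same route as the paper: the theorem is Harish-Chandra's Fourier inversion formula for invariant orbital integrals, and the paper's entire proof is the citation \cite{HC-S}, to which your sketch of the diagonal character computation, the support condition on $\Phi_{H|H_j}$, and the deferral of the analytic details to Harish-Chandra's work is a faithful expansion. The only cosmetic point is that the factor $\frac1{|W(G,H)|}$ belongs to $\Phi_{H|H}$ (compensating for the $W$-overcounting in the integral over $\hat H$) rather than to the character formula itself, but this washes out in the inversion exactly as you indicate.
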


\begin{proof}
\cite{HC-S}. 
\end{proof}

\subsection*{The split Cartan}
We continue to  assume that $G$ is a split group, i.e., it possesses a Cartan $H_{sp}=A_{sp} T_{sp}$, which is split and $\theta$-stable. Then $T_{sp}$ is the  centralizer of $A_{sp}$ in $K$,  it is a finite group. There are no imaginary roots, therefore $\eps_I$ is trivial and $\rho_I=0$.
We fix a choice of positive roots according to a parabolic $P_{sp}=A_{sp} M_{sp} N_{sp}$.
We write $A_{sp}^+$ for the positive Weyl chamber.
For $a\in A_{sp}^+$ and $t\in T_{sp}$ we have
$$
\CO_{at}(f)=\frac{F_f^{A_{sp} T_{sp}}(at)}{a^\rho\det(1-(at)^{-1}|\n_{sp})}.
$$
Further
\begin{align*}
&F_f^{A_{sp} T_{sp}}(at)\\
&=\sum_{\sigma\in\what T_{sp}}\int_{\a_{sp}^*}\Phi_{A_{sp}T_{sp}|A_{sp}T_{sp}}(at,(\sigma,i\la))\,\tr\pi_{\sigma,i\la}(f)\,d\la,\\
&=\frac1{|W|}\sum_{\sigma\in\what T_{sp}}\sum_{w\in W}\int_{\a_{sp}^*}a^{iw\la}\sigma(w^{-1}t)\,\tr\pi_{\sigma,i\la}(f)\,d\la,
\end{align*}
where we identify the real dual $\a_{sp}^*$ with $\what A_{sp}$ by sending $\la\in\a_{sp}^*$ to $a\mapsto a^{i\la}$ and the measure $d\la$ is the Plancherel measure.
Next we specialize to $f_0\in \CC_N^1(K\bs G/K)$, then $\pi_{\sigma,i\la}(f_0)=0$ unless $\sigma=1$, so that because of $\pi_{i\la}\cong\pi_{iw\la}$ we get
\begin{align*}
F_{f_0}^{A_{sp}T_{sp}}(at)
&=\frac1{|W|}\sum_{w\in W}\int_{\a_{sp}^*}a^{iw\la}\,\tr\pi_{\sigma,iw\la}(f)\,d\la\\
&=\int_{\a_{sp}^*}a^{i\la}\,\tr\pi_{\sigma,i\la}(f)\,d\la\\
&=\int_{\a_{sp}^*}h_{f_0}(i\la)\,e^{i\la(\log a)}\,d\la=a^\rho f_0^{N_{sp}}(a).
\end{align*}

We have shown

\begin{lemma}
If $A_{sp}T_{sp}$ is a split Cartan subgroup, then for $f_0\in \CC_N^1(K\bs G/K)$ and $a\in A_{sp}^+$, $t\in T_{sp}$ one has
$$
\CO_{at}(f_0)=\frac{f_0^{N_{sp}}(a)}{\det(1-(at)^{-1}|\n_{sp})},
$$
where $f_0^{N_{sp}}(a)=\int_{N_{sp}}f_0(an)\,dn$.
\end{lemma}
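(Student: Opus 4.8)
The statement is essentially the chain of equalities displayed just above repackaged as a lemma, so the plan is to lay out those ingredients in order and be explicit about what makes them fit together. First I would start from the orbital integral formula recorded for the split Cartan, $\CO_{at}(f)=F_f^{A_{sp}T_{sp}}(at)\big/\big(a^\rho\det(1-(at)^{-1}|\n_{sp})\big)$. For $a\in A_{sp}^+$ the element $at$ is in fact $G$-regular: any element of $G$ centralizing $a$ already centralizes $A_{sp}$ (regularity of $a$ in the split torus), hence lies in $A_{sp}T_{sp}$, so $G_{at}=A_{sp}T_{sp}=H_{sp}$; thus $\varpi_{at}=1$ and Lemma~\ref{lem1.3} degenerates to this plain formula. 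Everything therefore reduces to computing $F_{f_0}^{A_{sp}T_{sp}}(at)$ for the $K$-bi-invariant function $f_0$.

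Next I would invoke Theorem~\ref{inv-orb-int} with $H=H_j=A_{sp}T_{sp}$. Since $A_{sp}$ is a maximal split torus, no conjugate of a split component of another Cartan subgroup can contain it, so the vanishing clause $\Phi_{H|H_j}=0$ unless $gAg^{-1}\subset A_j$ kills every term except the diagonal one, for which $\Phi_{H_{sp}|H_{sp}}$ is given explicitly. As the split Cartan has no imaginary roots we have $\eps_I\equiv 1$ and $\rho_I=0$, so this kernel simplifies to $\frac1{|W|}\sum_{w\in W}a^{iw\la}\sigma(w^{-1}t)$ and $F_{f_0}^{A_{sp}T_{sp}}(at)=\frac1{|W|}\sum_{\sigma\in\what T_{sp}}\sum_{w\in W}\int_{\a_{sp}^*}a^{iw\la}\sigma(w^{-1}t)\,\tr\pi_{\sigma,i\la}(f_0)\,d\la$.

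Then I would use that $f_0$ is bi-$K$-invariant: $\tr\pi_{\sigma,i\la}(f_0)$ vanishes unless $\pi_{\sigma,i\la}$ is spherical, i.e. unless $\sigma=\1$, which collapses the $\sigma$-sum and removes the factor $\sigma(w^{-1}t)$. By the Knapp--Stein isomorphism $\pi_{i\la}\cong\pi_{iw\la}$, substituting $\la\mapsto w\la$ in the $w$-th summand shows all $|W|$ terms are equal, so $\frac1{|W|}\sum_w$ folds into the single integral $\int_{\a_{sp}^*}a^{i\la}\,\tr\pi_{i\la}(f_0)\,d\la$. By the Lemma characterizing $C_\tau$ (extended to $L^1_0(G,\Xi)$ via Proposition~\ref{prop1.5}) applied to the trivial $K$-type, $\pi_{i\la}(f_0)=h_{f_0}(i\la)P_{\pi,\1}$ with one-dimensional fixed space, so $\tr\pi_{i\la}(f_0)=h_{f_0}(i\la)$, and the integral becomes $\int_{\a_{sp}^*}h_{f_0}(i\la)\,e^{i\la(\log a)}\,d\la$.

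The one genuine point, which I expect to be the main obstacle, is to identify this last integral with $a^\rho f_0^{N_{sp}}(a)$. Since $h_{f_0}(\la)=\int_{A_{sp}}f_0^{N_{sp}}(b)\,b^{\la+\rho}\,db$, the function $i\la\mapsto h_{f_0}(i\la)$ is precisely the Fourier transform over the vector group $A_{sp}$ of $\Phi(f_0)\colon b\mapsto f_0^{N_{sp}}(b)b^\rho$, which lies in $L^1(A_{sp})^W$ by Proposition~\ref{prop1.5}; Fourier inversion then returns $\Phi(f_0)(a)=a^\rho f_0^{N_{sp}}(a)$, provided the Plancherel measure $d\la$ appearing in Theorem~\ref{inv-orb-int} is exactly the Euclidean measure on $\a_{sp}^*$ dual to the fixed Haar measure on $A_{sp}$. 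Verifying this compatibility is the only place where Harish-Chandra's normalization of measures relative to the form $B$, and the cancellation of the $c$-function against the Weyl-type factor $\Delta_+$ built into $F_f$, actually have to be checked. Dividing by $a^\rho\det(1-(at)^{-1}|\n_{sp})$ then yields the asserted formula for $\CO_{at}(f_0)$.
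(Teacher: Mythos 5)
Your proposal is correct and follows essentially the same route as the paper: the regular-element formula $\CO_{at}(f)=F_f^{A_{sp}T_{sp}}(at)/\bigl(a^\rho\det(1-(at)^{-1}|\n_{sp})\bigr)$, Harish-Chandra's inversion theorem with only the diagonal term $\Phi_{H_{sp}|H_{sp}}$ surviving, restriction to $\sigma=\1$ by $K$-bi-invariance, folding of the Weyl sum via $\pi_{i\la}\cong\pi_{iw\la}$, and Fourier inversion on $A_{sp}$ to recover $a^\rho f_0^{N_{sp}}(a)$. The measure-normalization point you flag is legitimate but is exactly what the paper's identification of $d\la$ with the dual Haar measure on $\what{A_{sp}}$ takes care of, so no genuinely new argument is needed.
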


\section{Orbital integrals for SL(3)}
The group $G=\SL_3(\R)$ possesses two conjugacy classes of Cartan subgroups.
The split Cartan $A_{sp}T_{sp}$ and the fundamental Cartan $A_1T_1$.
Here $A_{sp}$ is the group of all diagonal matrices with positive entries and $T_{sp}$ is the group of all diagonal matrices with entries in $\{\pm 1\}$.
Further, $A_1$ is the group of all diagonal matrices of the form $\diag(y,y,y^{-2})$, $y>0$.
Finally, $T_1$ is the group of all matrices $\smat k\ \ 1$, $k\in \SO(2)$.

\subsection*{The fundamental Cartan}
Let $\SL_2^\pm(\R)$ denote the group of all real $2\times 2$ matrices of determinant $\pm 1$.
We fix the parabolic subgroup $P_1=M_1BN_1$, where $M_1$ is the group of all matrices of the form $\smat g\ \ {\det(g)}$ with $g\in \SL_2^{\pm}(\R)$ and $N_1$ is the group of all upper triangular matrices with ones on the diagonal and a zero in the (1,2)-position.
The Cartan $H_1=A_1T_1$ lies in $P_1$.
Let $\h_1$ be the Lie algebra of $H_1$.
Then for $f\in \CC_N^1(G)$ we have
\begin{align*}
F_f^{A_1T_1}(at)&=\int_{\what{A_{sp}T_{sp}}}\Phi_{A_1T_1|A_{sp}T_{sp}}(at,\hat h)\,\tr\pi_{\hat h}(f)\,d\hat h\\ 
&\quad
+\int_{\what{A_1T_1}}\Phi_{A_1T_1|A_1T_1}(at,\hat h)\,\tr\pi_{\hat h}(f)\,d\hat h.
\end{align*}
The second integral equals
\begin{align*}
\sum_{\chi\in\what T}\int_{\a_1^*}\Phi_{A_1T_1|A_1T_1}(at,(\chi,\mu))\,\tr\pi_{\sigma_{\chi},i\mu}(f)\,d\mu,
\end{align*}
where $\sigma_\chi\in \what M_1$ is the discrete series representation with Harish-Chandra parameter $\chi$.
As these do not contain $K\cap M_1$ invariant vectors, the operators $\pi_{\sigma_\chi,i\mu}(f_0)$ are all zero.
So in the case $f=f_0\in C(K\bs G/K)$ the second integral vanishes.
This means that for $f_0\in \CC_N^1(K\bs G/K)$ and $at\in A_1T_1$ we get
$$
F_{f_0}^{A_1T_1}(at)
=\int_{\a_{sp}^*}\Phi(at,i\la)
\,h_{f_0}(i\la)\,e^{i\la(\log a)}\,d\la.
$$
Where we have used the abbreviation $\Phi(at,i\la)=\Phi_{A_1T_1|A_{sp}T_{sp}}(at,i\la)$.

\begin{definition}
Let $\CC_{N}(A_{sp})^W_0$ denote the set of all $g\in \CC_{N}(A_{sp})^W$ which vanish on the walls of the Weyl chambers up to order $N$.
\end{definition}

\begin{lemma}
For given $m\in\N$ there exists  $N\in\N$ such that for every $g\in \CC_{N}(A_{sp})^W_0$, with $f=\Phi^{-1}(g)$ the map $t\mapsto \CO_{at}(f)$, defined for regular $t$, extends to an  $m$-times continuously differentiable function in $t$.
\end{lemma}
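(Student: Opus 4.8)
The plan is to localise near the (finitely many) singular points of $T_1$ and there to use the vanishing of $g$ on a wall to cancel the imaginary Weyl factor. For $G=\SL_3(\R)$ the fundamental Cartan $H_1=A_1T_1$ has a single pair of imaginary roots, $\pm(e_1-e_2)$, and $at\in A_1T_1$ is non-regular precisely when $(at)^{e_1-e_2}=1$, i.e. when $t=\Id$ or $t=\diag(-1,-1,1)$. For $t$ regular, the regular-element orbital integral formula recalled before Lemma \ref{lem1.3}, together with the cancellation of the factor $\Delta_I(at)=1-(at)^{-(e_1-e_2)}$ of $F_f^{A_1T_1}(at)$ against the matching factor of $\det(1-(at)^{-1}|(\g/\h_1)^+)$, gives
$$
\CO_{at}(f)=\frac{F_f^{A_1T_1}(at)}{\Delta_I(at)\,D(a,t)},\qquad
D(a,t)=a^{\rho_P}\big(1-(at)^{-(e_1-e_3)}\big)\big(1-(at)^{-(e_2-e_3)}\big),
$$
where $D(a,t)$ is real-analytic in $t$ and, for $a$ fixed and regular in $A_1$, nowhere zero on a neighbourhood of each singular point. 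Away from those points $\CO_{at}(f)=\int_{G/A_1}f(xatx^{-1})\,dx$ depends smoothly on $t$ (differentiate under the integral, using the $\CC^1_N(G)$-estimates of Proposition \ref{prop1.5}). So the statement reduces to showing that $F_f^{A_1T_1}(at)/\Delta_I(at)$ extends to a $C^m$ function of $t$ near $t=\Id$ and near $t=\diag(-1,-1,1)$. Here $f=\Phi^{-1}(g)$ is $K$-bi-invariant (Proposition \ref{prop1.5}), so $F_f^{A_1T_1}(at)=\int_{\a_{sp}^*}\Phi(at,i\la)\,h_f(i\la)\,e^{i\la(\log a)}\,d\la$ as in the display preceding the last Definition, and $h_f(i\la)$ is the Euclidean Fourier transform of $g$ by Proposition \ref{prop1.2}.

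Write $t=t_0\exp(sX)$ with $t_0$ a singular point and $X$ a nonzero element of $\Lie(T_1)$. The invariant integral $F_f^{A_1T_1}$ is continuous on $A_1T_1$ and smooth off the walls, but it is not divisible by $\Delta_I$ to the order required for $C^m$-regularity of the quotient --- this reflects the genuine (logarithmic) divergence of $\int_{G/A_1}f(xatx^{-1})\,dx$ as $t\to t_0$ for generic $f$. The defect is governed by Harish-Chandra's jump, i.e. Cayley-transform, relations for the invariant integral across a noncompact imaginary wall (section 17 of \cite{HC-DS}): the failure of $F_f^{A_1T_1}$ to be divisible by $\Delta_I$, up to a prescribed order in $s$, is a finite combination of universal constants times transverse derivatives, of bounded order, of the invariant integral on the Cayley-transformed, more split Cartan --- for $\SL_3(\R)$ this is $A_{sp}T_{sp}$ --- restricted to the wall $\{y_1=y_2\}$ of $A_{sp}$. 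But $F_f^{A_{sp}T_{sp}}(at)=a^\rho f^{N_{sp}}(a)=\Phi(f)(a)=g(a)$ by the computation in Section 1, and $g\in\CC_N(A_{sp})^W_0$ vanishes along $\{y_1=y_2\}$ --- one of the walls of the Weyl chambers --- to order $N$; hence $F_f^{A_1T_1}(at)$ is divisible by $\Delta_I(at)$ to order at least $c_1 N$ in $s$, $c_1>0$ an absolute constant. Separately, to differentiate the quotient $k\le m$ times under the $\la$-integral, note that $h_f(i\la)=O\big((1+\norm\la)^{-N}\big)$ (the $N$ derivatives implicit in $g\in\CC_N(A_{sp})^W$), while each $s$-derivative of the kernel, after division by $\Delta_I$, grows at most polynomially in $\la$ of degree bounded by $k$ plus a constant depending only on $G$. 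Choosing $N$ larger than both $m/c_1$ and $m+\mathrm{const}$ then makes $F_f^{A_1T_1}(at)/\Delta_I(at)$, and hence $t\mapsto\CO_{at}(f)$, $m$-times continuously differentiable.

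The crux is the jump step: one has to make Harish-Chandra's Cayley-transform relation fully explicit in this concrete geometry --- i.e. pin down how the boundary behaviour of $F_f^{A_1T_1}$ at $t=\Id$ and $t=\diag(-1,-1,1)$ is expressed, with correct constants, through the transverse Taylor coefficients of $g$ along $\{y_1=y_2\}$. Concretely this is either a direct computation of the transfer kernel $\Phi_{A_1T_1|A_{sp}T_{sp}}$ for $\SL_3(\R)$ --- whose $t$-singularity should be $(1-(at)^{-(e_1-e_2)})^{-1}$ times an entire factor, the residue of which pairs against $h_f$ to produce $F_f^{A_{sp}T_{sp}}=g$ restricted to the wall --- or a careful invocation of \cite{HC-DS} with bookkeeping of normalisations; the former is the algebraic counterpart of the geometric fact that $\int_{G/A_1}f(xatx^{-1})\,dx$ can only become singular along the non-compact direction of the degenerating conjugacy class, along which the integral reduces to a hyperbolic $\SL_2(\R)$-orbital integral of a restriction of $f$. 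Once the resulting dependence $N=N(m)$ (linear in $m$) is fixed, the interior estimate and the assembly are routine.
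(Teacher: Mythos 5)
Your reduction is sound as far as it goes: the singular set of $t\mapsto\CO_{at}(f)$ is $\{\Id,\diag(-1,-1,1)\}$, the complex-root factors are harmless there for $a$ regular in $A_1$, and the whole content of the lemma is indeed that $F_f^{A_1T_1}(at)/\Delta_I(at)$ becomes divisible/regular to order $m$ at the wall when $g$ vanishes on the walls of $A_{sp}$ to order $N$. But that divisibility statement is exactly the lemma, and your proof of it consists of an appeal to ``Harish-Chandra's jump relations, made fully explicit, to a prescribed order in $s$, with universal constants'' --- which you yourself label the crux and then leave as one of two possible future computations. What is available off the shelf from \cite{HC-DS} is weaker than what you use: it gives continuity of $F_f^{H_1}$ up to the wall and jump formulas for the boundary values and for the first transverse ($\varpi$-type) derivatives in terms of the Cayley-transformed invariant integral; it does not, in the form you cite it, give that vanishing of $g$ to order $N$ on $\{y_1=y_2\}$ forces \emph{all one-sided limits} (not just the jumps) of the first $\sim c_1N$ transverse derivatives of $F_f^{A_1T_1}$ to vanish, which is what you need for the quotient by $\Delta_I$ to be $C^m$. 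Bootstrapping the higher-order relations (e.g.\ via $F_{zf}=\gamma(z)F_f$) and tracking that no extra singular terms beyond transverse derivatives of $g$ enter is genuine work, not bookkeeping. The secondary step --- polynomial growth in $\la$ of the $s$-derivatives of $\Phi_{A_1T_1|A_{sp}T_{sp}}(at,i\la)/\Delta_I(at)$ --- is likewise asserted rather than proved, and is of the same nature.

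For comparison, the paper avoids the jump relations entirely by an explicit descent to $M_1^0\cong\SL_2(\R)$: using the $M_1A_1N_1K$-integration formula and $K$-bi-invariance, $\CO_{at}(f_0)\det(1-at|\n_1)$ becomes an elliptic orbital integral on $\SL_2(\R)$ of the $K_1$-bi-invariant function $f_{0,a}(x)=\int_{N_1}f_0(xan)\,dn=\phi_{0,a}(\tr(x^tx)-2)$; a change of variables turns this into $\int_0^\infty\phi_{0,a}(y)\,\bigl(2\sqrt{4\sin^2\theta+y}\,|\sin\theta|\bigr)^{-1}dy$, which is visibly $C^m$ at $\theta=0$ once $\phi_{0,a}$ vanishes to order $m$ at $0$; and the link to $g$ is the identity $I(f_{0,a})(b)=g(ab)(ab)^{-\rho}=e^{-s}\CA(\phi_{0,a})((e^s-e^{-s})^2)$ together with the inversion $\phi=-\frac1\pi\CA(q')$ of the Abel transform, which transfers the vanishing of $g$ at the wall to vanishing of $\phi_{0,a}$ at $0$. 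That explicit computation is precisely the quantitative input your proposal postpones; without it (or an equally explicit version of the jump relations), the argument is not complete.
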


\begin{proof}
We need to compute the orbital integral of $at$ more explicitly.
Let again $P_1=M_1A_1N_1$ be the parabolic with split component $A_1$.
Let $\ga=at$ be a regular element of $A_1 T_1$.
Using the $M_1A_1N_1K$-integral formula, as $f_0$ is $K$-bi-invariant, we get
\begin{align*}
\CO_{at}(f_0)&=\int_{A_1T_1\bs G}f_0(x^{-1} at x)\,dx\\
&=\int_{T_1\bs M_1 N_1}f_0(n^{-1}m^{-1} at mn)\,dn\,dm\\
&= \frac1{\det(1-at|\n_1)}\int_{T_1\bs M_1}\int_{N_1}f_0(m^{-1}tm\, an)\,dn\,dm.
\end{align*}
Let $M_1^0\cong\SL_2(\R)$ be the connected component of $M_1$ and let $t_0$ be the matrix $\diag(-1,1,-1)$, so that $M_1=M_1^0\sqcup t_0M_1^0$.
We first use $K$-invariance of $f$ and then the $KAK$ integration formula on the group $M_1^0$ to get
\begin{align*}
\CO_{at}(f_0)&=\frac 2{\det(1-at|\n_1)}\int_{T_1\bs M_1^0}\int_{N_1}f_0(m^{-1}tm\, an)\,dn\,dm\\
&=\frac 2{\det(1-at|\n_1)}\int_{B^+}\int_{N_1}f_0(b^{-1}tb\, an)(b^\al-b^{-\al})\,dn\,db,
\end{align*}
where $B^+$ is the set of all matrices of the form $\diag(e^s,e^{-s},1)$ with $s>0$ and $\al$ is the positive root of $(B,M_1)$.
The function 
$$
f_{0,a}(x)=\int_{N_1}f_0(x\, an)\,dn
$$ 
is $K_1=K\cap M_1^0$ bi-invariant on $M_1^0$.
Hence there is a function $\phi_a$ of one variable such that
$$
f_{0,a}(x)=\phi_a(\tr(x^tx)-2).
$$
Writing $t=\diag(R_\theta,1)$ with $R_\theta=\smat{\cos\theta}{-\sin\theta}{\sin\theta}{\cos\theta}$ we get that 
$$
\CO_{at}(f_0)\det(1-at|\n_1)
$$ 
equals
\begin{align*}
&2\int_{B^+}f_{0,a}(b^{-1}tb)(b^\al-b^{-\al})\,db\\
&= 2\int_0^\infty f_{0,a}\(\smat{e^{-s}}\ \ {e^s} R_\theta\smat{e^s}\ \ {e^{-s}}\)(e^{2s}-e^{-2s})\,ds\\
&= 2\int_0^\infty f_{0,a}\smat{\cos\theta}{e^{-2s}\sin\theta}{e^{2s}\sin\theta}{\cos\theta}
(e^{2s}-e^{-2s})\,ds\\
&= 2\int_0^\infty \phi_{0,a}(2\cos^2+(e^{4s}+e^{-4s})\sin^2-2)
(e^{2s}-e^{-2s})\,ds\\
&= 2\int_0^\infty \phi_{0,a}(2\cos^2+(e^{2s}+e^{-2s})^2\sin^2-2\sin^2-2)
(e^{2s}-e^{-2s})\,ds\\
&= 2\int_0^\infty \phi_{0,a}(4\cos^2+(e^{2s}+e^{-2s})^2\sin^2-4)
(e^{2s}-e^{-2s})\,ds\\
 &= \int_0^\infty \phi_{0,a}(4\cos^2+(e^{s}+e^{-s})^2\sin^2-4)
(e^{s}-e^{-s})\,ds\\
 &= \int_2^\infty \phi_{0,a}(4\cos^2\theta+v^2\sin^2\theta-4)\,dv\\
  &= \int_2^\infty \phi_{0,a}\((v^2-4)\sin^2\theta\)\,dv=\int_0^\infty\frac{\phi_{0,a}(y)}{2\sqrt{4\sin^2\theta+y}\,|\sin\theta|}.
\end{align*}
This extends to an $m$-times differentiable function in the point $\theta=0$, if $\phi_{0,a}(y)$ vanishes to order $m$ at $y=0$.
Let $I$ denote the integral transform $I(h)(b)=\int_N h(bn)$, where $h$ is a function on $M_1^0\cong\SL_2(\R)$, $b=\diag(e^s,e^{-s})$ and $N$ is the group of all upper triangular matrices in $\SL_2(\R)$ with ones on the diagonal.
Then
\begin{align*}
I(f_{0,a})(b)&=\int_N\int_{N_1}f_0(bnan')\,dn\,dn'\\
&=\int_N\int_{N_1}f_0(abnn')\,dn\,dn'
=\int_{N_{sp}}f_0(abn)\,dn=g(ab)(ab)^{-\rho}.
\end{align*}
In terms of $\phi_{0,a}$ we have
\begin{align*}
I(f_{0,a})(b)&=I(f_{0,a})\smat{e^s}\ \ {e^{-s}}\\
&=\int_\R\phi_{0,a}\(\tr\smat{e^s}\ \ {e^{-s}}\smat 1x\ 1-2\)\,dx\\
&=\int_\R\phi_{0,a}\(e^{2s}+e^{-2s}+e^{2s}x^2-2\)\,dx\\
&=e^{-s}\CA(\phi_{0,a})\((e^s-e^{-s})^2\),
\end{align*}
where $\CA$ denotes the \e{Abel transform}
$$
\CA(\phi)(y)=\int_\R\phi(y+x^2)\,dx.
$$
By Lemma 11.2.7 of \cite{HA2}, the Abel transform is invertible, more precisely, one has the following:
Let $\phi$ be a continuously differentiable function on $[0,\infty)$ such that 
$$
|\phi(x+s^2)|,\, |s\phi'(x+s^2)|\le \al(s)
$$
for some $\al\in L^1([0,\infty))$ and all $x\ge 0$, then $q=\CA(\phi)$ is continuously differentiable and 
$$
\phi=\frac{-1}\pi \CA(q').
$$
This implies that vanishing up to some order of the function $g$ is inherited by the function $\phi_{0,a}$ and the lemma is proven.
\end{proof}

\subsection*{Pseudo-cusp forms}
\begin{definition}
A $K$-finite function $f\in L^1(G)$ is called a \e{trace class function}, if $\pi(f)$ is a trace class operator for every $\pi\in\what G$.

A trace class function is called a \e{pseudo-cusp form}, if $\tr\pi(f)=0$ for every $\pi\in\what G$ which is induced from the minimal parabolic $P_{sp}$.

If $\pi,\eta$ are representations of a group $G$ and $H$ is a subgroup, we write $[\pi:\eta]_H$ for
$$
\dim\Hom_H(V_\pi,V_\eta).
$$
\end{definition}

\begin{definition}
For $N\in\N$ let $\CC_N^\ev(\R)$ denote the space of all even $\phi\in C^N(\R)$ such that  
$$
\phi^{(m)}(x)=O(|x|^{-N})
$$
for every $0\le m\le N$.
Let $\phi\in\CC_N^\ev(\R)$.
Let $\tau\in\what K$ be given.
Then $s_0=\sup_{\pi\in\what G}\pi(C)<\infty$. 
By Proposition 2.1 of \cite{class} it follows that for sufficiently large $u\in\R$ there exists  a uniquely determined $f_{\phi,\tau,u}\in \CC_N^1(G)$ such that
$$
\pi\(f_{\phi,\tau,u}\)=\frac1{\dim\tau}\phi\(\sqrt{-\pi(C)-\frac14+u}\)\ P_{\pi,\tau}
$$
holds for every $\pi\in\what G$.
The dimension factor is there to give this operator the trace $\phi\(\sqrt{-\pi(C)-\frac14+u}\)[\pi:\tau]_K$, where
$$
[\pi:\tau]_K=\dim\Hom_K(V_\pi,V_\tau).
$$

\end{definition}

Let $\rho_1$ denote the half sum of positive roots of $(A_1,P_1)$. 
Recall that for every $k=0,1,2,\dots$ there exists an irreducible representation $\delta_{2k}$ of $K=\SO(3)$ of dimension $2k+1$ and this exhausts $\what K$.
For a virtual $K$-representation $\tau=\tau_+-\tau_-$ we define $f_{\phi,\tau,u}=f_{\phi,\tau_+,u}-f_{\phi,\tau_-,u}$.
For every $n\in\N$ there is a discrete series representation $\CD_{n+1}$ of $M_1=\SL_2^\pm(\R)$ which fits into an exact sequence
$$
0\to\CD_{n+1}\to \pi_{(-1)^{n+1},n}\to \sigma_{n-1}\to 0,
$$
where $\sigma_m$ is the representation of $\SL_2^\pm(\R)$ on the space of homogeneous polynomials $p(X,Y)$ of degree $m$.
The $\CD_{n+1}$ exhaust the discrete series of $M_1$.
Let $\g=\k\oplus\p$ be the Cartan decomposition of $\g$ with respect to $\k=\Lie(K)$.

For $k=0,1,2,\dots$ we consider the 
virtual representation
\begin{align*}
\tau_0&=\delta_4-\delta_2-2\delta_0,\\
\tau_k&=\tau_0\otimes\(\delta_{2k}-\delta_{2(k-1)}+\delta_{2(k-2)}-\dots\pm\delta_0\),\quad k\ge 1.
\end{align*}
For simplicity we  write $f_{\phi,k,u}$ instead of $f_{\phi,\tau_k,u}$.

\begin{lemma}
Let $\pi=\pi_{\CD_{n+1},\nu}$ be induced from the discrete series representation $\CD_{n+1}$ and $\nu\in\a_{1,\C}^*$.
\begin{enumerate}[\rm (i)]
\item Write $\p_{M_1}=\p\cap\m_1$, then $\tr\pi(f_{\phi,k,u})$ equals
$$
\tr\pi_{\CD_{n+1},\nu}(f_{\phi,k,u})=\begin{cases}\phi(\sqrt{|\nu|^2+u}),& n=k+1,\\
0&\text{otherwise.}\end{cases}
$$
\item If on the other hand, $\pi\in\what G$ is induced from $P_{sp}$, then $\tr\pi(f_{\phi,k,u})=0$, so $f_{\phi,k,u}$ is a pseudo-cusp form.
\end{enumerate}
\end{lemma}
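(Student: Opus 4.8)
The plan is to compute $\tr\pi_{\CD_{n+1},\nu}(f_{\phi,k,u})$ by branching the relevant representations to $K=\SO(3)$ and using the defining property of $f_{\phi,\tau_k,u}$, namely that $\pi(f_{\phi,\tau,u})=\frac1{\dim\tau}\phi(\sqrt{-\pi(C)-\tfrac14+u})P_{\pi,\tau}$, so that
$$
\tr\pi(f_{\phi,\tau_k,u})=\phi\(\sqrt{-\pi(C)-\tfrac14+u}\)\,[\pi:\tau_k]_K.
$$
Here $\tau_k=\tau_+-\tau_-$ is virtual, so $[\pi:\tau_k]_K$ means the alternating sum $\sum_j(\pm1)[\pi:\delta_{2j}]_K$ with the signs dictated by the definition of $\tau_k$. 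Thus the whole computation reduces to two ingredients: (a) the Casimir eigenvalue $-\pi(C)-\tfrac14$ on $\pi_{\CD_{n+1},\nu}$, which must come out to $|\nu|^2$ after the shift (fixing the normalization of $B$), and (b) the $K$-multiplicities $[\pi_{\CD_{n+1},\nu}:\delta_{2j}]_K$.

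For (b) I would use Frobenius reciprocity for the parabolically induced representation: as a $K$-module, $\pi_{\CD_{n+1},\nu}|_K\cong\Ind_{K_1}^K(\CD_{n+1}|_{K_1})$ where $K_1=K\cap M_1=\SO(2)$ (up to the component group, which is handled by the $\SL_2^\pm$ versus $\SL_2$ bookkeeping already set up in the excerpt with $t_0=\diag(-1,1,-1)$). The restriction of the discrete series $\CD_{n+1}$ of $\SL_2^\pm(\R)$ to $\SO(2)$ is the sum of characters of weights $\pm(n+1),\pm(n+3),\dots$, and $\Ind_{\SO(2)}^{\SO(3)}$ of the weight-$m$ character of $\SO(2)$ contains $\delta_{2j}$ with multiplicity one precisely when $2j\ge|m|$ and $2j\equiv m\pmod 2$. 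Summing the contributions of all weights in $\CD_{n+1}|_{\SO(2)}$ gives $[\pi_{\CD_{n+1},\nu}:\delta_{2j}]_K=\#\{m\ :\ |m|\ge n+1,\ m\equiv n+1\!\!\pmod2,\ 2j\ge|m|\}$, a step function in $j$ that jumps by $2$ each time $2j$ crosses an allowed weight. The design of $\tau_0=\delta_4-\delta_2-2\delta_0$ and of the telescoping factor $\delta_{2k}-\delta_{2(k-1)}+\cdots\pm\delta_0$ is exactly reverse-engineered so that the alternating sum $[\pi:\tau_k]_K$ of this step function collapses: it should vanish for all $n$ except $n=k+1$, where it equals $1$. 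I would verify this by pairing the second-difference of the multiplicity step function against the coefficients of $\tau_k$; since $\tau_k=\tau_0\otimes(\text{telescoping sum})$ and tensoring by $\delta_{2k}-\delta_{2(k-1)}+\cdots$ shifts the ``detection window'', the only surviving contribution is at the bottom of the $K$-spectrum of $\CD_{k+2}$, giving the claimed $\phi(\sqrt{|\nu|^2+u})$.

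For part (ii): a representation $\pi\in\what G$ induced from the minimal parabolic $P_{sp}$ is a subquotient of some $\pi_{\sigma,\mu}$ with $\sigma\in\what T_{sp}$ and $\mu\in\a_{sp,\C}^*$. By the same Frobenius-reciprocity argument, its $K$-types are those of $\Ind_{T_{sp}}^K\sigma$, where $T_{sp}$ is the finite diagonal sign group; this $K$-module is independent of $\mu$ and one computes $[\Ind_{T_{sp}}^K\sigma:\delta_{2j}]_K$ to be a fixed affine-linear (in fact eventually constant-slope) function of $j$. The point is that for \emph{every} character $\sigma$ of $T_{sp}$ this multiplicity function is annihilated by the same second-difference operator that kills the minimal-principal-series multiplicities, hence by the telescoped operator defining $\tau_k$; so $[\pi:\tau_k]_K=0$ and therefore $\tr\pi(f_{\phi,k,u})=0$. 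Combined with the fact that $f_{\phi,k,u}\in\CC_N^1(G)$ is a finite $\Z$-linear combination of the trace-class functions $f_{\phi,\tau,u}$, this says precisely that $f_{\phi,k,u}$ is a pseudo-cusp form in the sense of the definition above.

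The main obstacle I anticipate is the bookkeeping in (b): one must track the $\SO(2)$-weight data through the parabolic induction with the correct multiplicities (including the subtlety that $M_1=\SL_2^\pm(\R)$ has two components, so $\CD_{n+1}$ is reducible on restriction to $\SL_2(\R)$ but the extra $t_0$-factor is exactly what was used in the orbital-integral computation and restores the factor of $2$), and then show the alternating sum over the explicitly chosen virtual $K$-type $\tau_k$ telescopes to a single Kronecker delta in $n$. Everything else — the Casimir eigenvalue identity and the vanishing on minimal principal series — is a short calculation once the $K$-branching rules are in hand.
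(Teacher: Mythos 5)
Your strategy is the same as the paper's: reduce $\tr\pi_{\CD_{n+1},\nu}(f_{\phi,k,u})$ to $\phi(\cdot)\cdot[\pi_{\CD_{n+1},\nu}:\tau_k]_K$ via the defining property of $f_{\phi,\tau,u}$, compute the $K$-multiplicity by Frobenius reciprocity through $K\cap M_1$, and let the alternating structure of $\tau_k$ telescope. (The paper imports the base case $k=0$, including the Casimir normalization, from Lemma 5.1 of Deitmar--Hoffmann, and then only does the $\SO(2)$-weight bookkeeping.) However, there are two concrete problems with your execution. First, your branching rule for $\SO(2)\subset\SO(3)$ is wrong in a way that would derail the computation: in the paper's normalization $\delta_{2j}$ is the $(2j+1)$-dimensional representation of $\SO(3)$, whose restriction to the rotation subgroup $\SO(2)$ is $\eps_{-j}+\eps_{-j+1}+\cdots+\eps_{j}$ --- every integer weight with $|m|\le j$, each once, with \emph{no} parity restriction. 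Your condition ``$2j\ge|m|$ and $2j\equiv m\pmod 2$'' is the weight pattern of the degree-$2j$ polynomial representation of $\SL_2$ on the double-cover torus, and you mix it with the $\SO(3)$-torus normalization implicit in writing $\CD_{n+1}|_{\SO(2)}$ with lowest weights $\pm(n+1)$. Run literally, your pairing places the detection window at $2j\approx n+1$ rather than $j\approx n+1$, and the alternating sum against $\tau_k$ then does not produce the Kronecker delta at $n=k+1$.

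Second, the decisive collapse is asserted (``reverse-engineered so that\dots'') rather than computed, and since the exact outcome ($n=k+1$, coefficient $1$) is the content of the lemma, this step cannot be waved at. It is short once the conventions are fixed: $\tau_0|_{\SO(2)}=\eps_2+\eps_{-2}-2\eps_0$, multiplying by $\delta_{2k}|_{\SO(2)}=\sum_{m=-k}^{k}\eps_m$ gives $B_{k+2}+B_{k+1}-B_{k}-B_{k-1}$ with $B_m=\eps_m+\eps_{-m}$, and the alternating sum over the telescoping factor collapses to $B_{k+2}-B_{k}$, which pairs nontrivially with $\CD_{n+1}|_{\SO(2)}$ precisely when $n+1=k+2$ (the $\pm$ pair and the two components of $M_1=\SL_2^\pm(\R)$ account for the normalization so that the coefficient is $1$ and not $2$). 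For part (ii), your mechanism is also not quite right: the multiplicities $[\Ind_{T_{sp}}^K\sigma:\delta_{2j}]$ are affine-linear in $j$ only up to a parity-alternating term, and the operator attached to $\tau_0$ does not annihilate all affine-linear functions of $j$ (it sends the constant function $1$ to $-2$). The clean argument is that $\tau_0$ restricted to the Klein four-group $T_{sp}$ is the \emph{zero} virtual character (its character vanishes at the identity, $5-3-2=0$, and at each rotation by $\pi$, $1-(-1)-2=0$), hence $\tau_k|_{T_{sp}}=0$ and $[\pi:\tau_k]_K=[\sigma:\tau_k|_{T_{sp}}]_{T_{sp}}=0$ for every subquotient $\pi$ of a minimal principal series.
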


\begin{proof}
For $t\in T_1\cong\SO(2)$ of the form $t=\smat a{-b}ba$ we write $\eps_m(t)=(a+bi)^m$.
Lemma 5.1 in \cite{class} tells us that in the case $k=0$ we have
$$
\tr\pi_{\CD_{n+1},\nu}(f_{\phi,0,u})
=\phi\(\sqrt{|\nu|^2+u}\)
\left[\CD_{n+1},\(\bigwedge^\odd\p_{M_1}-\bigwedge^\ev\p_{M_1}\)\right]_{K\cap M_1^+}.
$$
An inspection of the proof yields that the expression $\tr\pi_{\CD_{n+1},\nu}(f_{\phi,\tau_0\otimes\delta_{2k},u})$ equals $\phi\(\sqrt{|\nu|^2+a}\)$ times
$$
\left[\CD_{n+1},\(\bigwedge^\odd\p_{M_1}-\bigwedge^\ev\p_{M_1}\)\otimes\delta_{2k}\right]_{K\cap M_1^+}.
$$
The restriction to $\SO(2)$ equals
\begin{align*}
&\left.\(\bigwedge^\odd\p_{M_1}-\bigwedge^\ev\p_{M_1}\)\otimes\delta_{2k}\right|_{\SO(2)}\\
&=(\eps_2+\eps_{-2}-2\eps_0)(\eps_{k}+\eps_{k-1}+\dots+\eps_{-k})\\
&=\(\eps_{k+2}+\eps_{-(k+2)}\)
+\(\eps_{k+1}+\eps_{-(k+1)}\)
-\(\eps_{k}+\eps_{-k}\)
-\(\eps_{k-1}+\eps_{-(k-1)}\).
\end{align*}
We add these contributions up with alternate signs to get
\begin{align*}
&\tr\pi_{\CD_{n+1},\nu}(f_{\phi,k,u})\\
&=
\phi\(\sqrt{|\nu|^2+u}\)\,\left[\CD_{n+1}:\(\eps_{k+2}+\eps_{-(k+2)}\)-\(\eps_{k}-\eps_{-k}\)\right]_{K\cap M_1}.
\end{align*}
Now since
$$
\left.\CD_{n+1}\right|_{\SO(2)}=\bigoplus_{\substack{|k|\ge{n+1}\\ k\equiv (n+1)\mod 2}}\eps_k
$$
we find that there is only one $n$ with a non-zero contribution, and this is $n=k+1$.
This implies the first claim.
The second is obtained as in Lemma 5.1 of \cite{class}.
\end{proof}

We next compute orbital integrals for $f_{\phi,k,u}$.

\begin{lemma}\label{lem2.3} 
Let $\phi\in\CC_N^\ev(\R)$ and $a\in\R$ sufficiently large.
The function $F_{f_{\phi,k,u}}^{A_{sp}T_{sp}}$ vanishes identically. For $at\in A_1T_1$ we have $\Delta_I=(1-\eps_{-2})$ and for $k=0,1,2,\dots$ we have
$$
F_{f_{\phi,k,u}}^{A_1T_1}(at)=\Delta_I
\frac{\eps_{k+1}(t)+{\eps_{-k-1}(t)}}2\int_{\a_1^*}\phi\(\sqrt{|\nu|^2+u}\)\,a^{i\nu}\,d\nu.
$$
\end{lemma}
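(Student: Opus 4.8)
The plan is to feed the trace formula of the preceding lemma into Harish-Chandra's orbital integral expansion (Theorem~\ref{inv-orb-int}), using throughout that $f_{\phi,k,u}$ is a pseudo-cusp form. First I would dispose of the split Cartan. By Theorem~\ref{inv-orb-int}, $F^{A_{sp}T_{sp}}_{f_{\phi,k,u}}(h)$ is a sum, over the two conjugacy classes of $\theta$-stable Cartan subgroups of $\SL_3(\R)$, of integrals $\int_{\what{H_j}}\Phi_{A_{sp}T_{sp}|H_j}(h,\what{h_j})\,\tr\pi_{\what{h_j}}(f_{\phi,k,u})\,d\what{h_j}$. Since $\dim A_{sp}=2>1=\dim A_1$, no $g\in G$ satisfies $gA_{sp}g^{-1}\subset A_1$, so the vanishing clause of the theorem removes the fundamental Cartan term; the remaining term, over $\what{A_{sp}T_{sp}}$, involves only the unitary principal series induced from $P_{sp}$, whose trace against a pseudo-cusp form is zero. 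Hence $F^{A_{sp}T_{sp}}_{f_{\phi,k,u}}\equiv 0$, the first assertion.

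For the fundamental Cartan $H_1=A_1T_1$, Theorem~\ref{inv-orb-int} again produces two terms; the one over $\what{A_{sp}T_{sp}}$ vanishes by the same pseudo-cusp argument, leaving
$$
F^{A_1T_1}_{f_{\phi,k,u}}(at)=\sum_{\chi\in\what{T_1}}\int_{\a_1^*}\Phi_{A_1T_1|A_1T_1}(at,(\chi,\nu))\,\tr\pi_{\sigma_\chi,i\nu}(f_{\phi,k,u})\,d\nu.
$$
By the preceding lemma the inner trace equals $\phi\(\sqrt{|\nu|^2+u}\)$ when $\sigma_\chi=\CD_{k+2}$ and vanishes otherwise; since $\CD_{n+1}$ has Harish-Chandra parameter $\eps_n$, only $\chi=\eps_{k+1}$ and its Weyl conjugate $\chi=\eps_{-(k+1)}$ survive in the sum.

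It then remains to evaluate $\Phi_{A_1T_1|A_1T_1}$ from the explicit formula of Theorem~\ref{inv-orb-int}. The relevant root data are: $(\g_\C,\h_{1,\C})$ has exactly one positive imaginary root $\beta$, namely the positive root of $(M_1,T_1)$; it is trivial on $A_1$ and restricts to $\eps_2$ on $T_1$, so $\Delta_I(at)=1-(at)^{-\beta}=1-\eps_{-2}(t)$ and $\rho_I=\tfrac12\beta$ corresponds to $\eps_1$; moreover $W(G,H_1)=\{1,s_\beta\}$ with $\eps_I(s_\beta)=-1$ and $s_\beta(at)=at^{-1}$. Substituting these into the formula of Theorem~\ref{inv-orb-int} gives
$$
\Phi_{A_1T_1|A_1T_1}(at,(\eps_m,\nu))=\tfrac12\,a^{i\nu}\(\eps_m(t)-\eps_{-m-2}(t)\).
$$
Adding the $\chi=\eps_{k+1}$ and $\chi=\eps_{-(k+1)}$ contributions turns the integrand into $\tfrac12\,a^{i\nu}\(\eps_{k+1}(t)+\eps_{-(k+1)}(t)-\eps_{k-1}(t)-\eps_{-(k+3)}(t)\)\phi\(\sqrt{|\nu|^2+u}\)$, and since the bracket equals $(1-\eps_{-2}(t))(\eps_{k+1}(t)+\eps_{-(k+1)}(t))=2\Delta_I(at)\,\tfrac{\eps_{k+1}(t)+\eps_{-(k+1)}(t)}2$, integrating in $\nu$ yields the asserted identity.

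I expect the genuinely delicate step to be this last computation: fixing the normalizations so that the lemma's index $n=k+1$ translates into the characters $\eps_{\pm(k+1)}$, correctly pinning down the imaginary-root data $\beta$, $\rho_I$, $\eps_I$ and the Weyl group $W(G,H_1)$, and carrying out the two-term Weyl sum defining $\Phi_{A_1T_1|A_1T_1}$. The two vanishing statements, by contrast, fall out immediately from the pseudo-cusp property together with the support clause of Theorem~\ref{inv-orb-int}.
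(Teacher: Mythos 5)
Your argument is correct and is essentially the paper's own proof: both dispose of the split-Cartan invariant integral via the pseudo-cusp property combined with the support clause of Theorem~\ref{inv-orb-int}, and both evaluate $\Phi_{A_1T_1|A_1T_1}$ from the order-two real Weyl group and the single imaginary root to arrive at $\tfrac12(\eps_{k+1}-\eps_{-k-3}+\eps_{-k-1}-\eps_{k-1})=(1-\eps_{-2})\tfrac{\eps_{k+1}+\eps_{-k-1}}2$. The only point you gloss over is the parameter $\chi=\eps_0$, whose limit of discrete series (the Steinberg case) is not covered by part (i) of the preceding lemma and needs the paper's one-line remark that its trace against the pseudo-cusp form also vanishes.
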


\begin{proof}
The first assertion is due to the fact that $f_{\phi,k,u}$ is a pseudo-cusp form.
It follows that
$$
F_{f_{\phi,k,u}}^{A_1T_1}(at)
=\frac12 \sum_{\chi\in\what{T_1}}\int_{\a_1^*}\(\sp{(\chi,i\nu),at}-\sp{(\ol{\chi},i\nu),at}\eps_{-2}(t)\)\,\tr\pi_{\tilde\chi,\nu}(f_{\phi,k,u})\,d\nu.
$$
Here $\tilde\chi$ stands for the (limit of) discrete series representation with Harish-Chandra parameter $\chi$.
For $\chi=\eps_0$, this is the Steinberg representation, on which $f_{\phi,k}$ has trace zero.
For $\chi=\eps_{\pm m}$ with $m\ge 1$ we have $\tilde\chi=\CD_{m+1}$
By the last lemma, only the summands with $|m|=k+1$ give non-zero contributions.
So the sum is $\int_{\a_1^*}\phi\(\sqrt{|\nu|^2+u}\)\,a^{i\nu}\,d\nu$ times
\begin{align*}
\eps_{k+1}-\eps_{-k-3}+\eps_{-k-1}-\eps_{k-1}=(1-\eps_{-2})(\eps_{k+1}+\eps_{-k-1}).\mqed
\end{align*}
\end{proof}

\section{The test function}
\subsection*{Symmetrized orbital integrals}
Let $f_0\in \CC_N^1(K\bs G/K)$ for $N$ large enough.
The function $F_{f_0}^{A_1T_1}(at)\Delta_I(t)^{-1}$ is even in $t\in T_1$, therefore it has a Fourier expansion
$$
\frac{F_{f_0}^{A_1T_1}(at)}{\Delta_I(t)}
=\tilde C(a)+\sum_{k=0}^\infty \(\eps_{k+1}(t)+\eps_{-k-1}(t)\) \tilde C_k(a)
$$
which is such that $C,C_k$ become arbitrarily smooth as $N$ increases and such that for every $m\in\N$ there is $N\in\N$ such that 
$$
|\tilde C_k(a)|\ll k^{-m}
$$
uniformly.
The function $\Delta_+$ is symmetric in $a$ in the sense that $\Delta_+(at)=\Delta_+(a^{-1}t)$ for $at\in A_1T_1$.
We make the invariant integral artificially symmetric by considering $SF^{A_1T_1}_{f_0}(at)=F_{f_0}^{A_1T_1}(at)+F_{f_0}^{A_1T_1}(a^{-1}t)$. We then have
$$
\frac{SF_{f_0}^{A_1T_1}(at)}{\Delta_I(t)}
=C(a)+\sum_{k=0}^\infty \(\eps_{k+1}(t)+\eps_{-k-1}(t)\) C_k(a),
$$
where $C, C_k$ are now symmetric, too.
For every $k\ge 0$ there exists $u_k\in\R$ and an even 
function $\phi_k\in\CC_N^\ev(R)$ such that $C_k(a)=\int_{\a_1^*}\phi_k\(\sqrt{|\nu|^2+u_k}\)\,a^{i\nu}\,d\nu$. 
Then the function $f_{\phi_k,k,u_k}$ does not depend on the choice of the pair $(\phi_k,u_k)$.
We therefore write it as
$$
f_{[k]}
$$
Let
$$
f_1=-\sum_{k=0}^\infty f_{[k]}.
$$
The sum converges uniformly with all derivatives up to order $N$. 

Let $S\CO_\ga(h)=\CO_\ga(h)+\CO_{\ga^{-1}}(h)$ denote the symmetrized orbital integral of a function $h$.

\begin{lemma}\label{lem3.1}
Suppose that $g\in \CC_{N}(A_{sp})^W$ vanishes on the walls of the Weyl chambers up to order one. Let $f_0=\Phi^{-1}(g)\in \CC_N^1(K\bs G/K)$, where $\Phi$ is the map of Proposition \ref{prop1.2}. Let $f_1$ be as above and set $f=f_0+f_1\in \CC_N^1(G)$. 
Then for any semisimple element $\ga\in G$ we have $S\CO_\ga(f)=0$ unless $\ga$ is  conjugate to an element $at\in A_{sp}^+ T_{sp}$, where $A_{sp}^+$ is the open positive Weyl chamber, in which case we call $\ga$ a \e{split regular element} and then we have
$$
S\CO_\ga(f)=\frac{g(a)+g(a^{-1})}{a^{\rho}\det(1-(at)^{-1}|\n_{sp})}
$$
\end{lemma}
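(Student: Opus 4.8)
\emph{Plan of proof.} Since an orbital integral depends only on the conjugacy class of $\ga$, and since every semisimple element of $\SL_3(\R)$ is conjugate into one of the two $\theta$-stable Cartan subgroups $A_{sp}T_{sp}$ or $A_1T_1$, the plan is to evaluate $S\CO_\ga(f)$ separately on each Cartan. I would first observe that an element $at\in A_{sp}^+T_{sp}$ has three pairwise distinct eigenvalues, hence is regular in $G$; so what remains is the stated formula for such $at$, together with the vanishing of $S\CO_\ga(f)$ for $\ga$ lying on a wall of $A_{sp}$ and for $\ga\in A_1T_1$. I will use repeatedly that $f_1$ is a pseudo-cusp form, and that $f_1$ was constructed precisely to cancel the $A_1T_1$-orbital integrals of $f_0$.

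Over the split Cartan, $f_1$ should contribute nothing away from the center: by Lemma \ref{lem2.3} each $F^{A_{sp}T_{sp}}_{f_{[k]}}$ vanishes, so $F^{A_{sp}T_{sp}}_{f_1}=-\sum_k F^{A_{sp}T_{sp}}_{f_{[k]}}\equiv 0$ (consistent with $f_1$ being a pseudo-cusp form, since $F^{A_{sp}T_{sp}}_h$ is a superposition of traces $\tr\pi_{\sigma,i\la}(h)$ of $P_{sp}$-principal series); by Lemma \ref{lem1.3} this forces $\CO_\ga(f_1)=0$ for every non-central $\ga\in A_{sp}T_{sp}$. For $\ga=at$ with $a\in A_{sp}^+$, $t\in T_{sp}$, the Lemma on the split Cartan from Section 1 together with $f_0^{N_{sp}}(a)=a^{-\rho}g(a)$ (since $g=\Phi(f_0)$) gives $\CO_{at}(f_0)=g(a)/(a^\rho\det(1-(at)^{-1}|\n_{sp}))$. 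As $t^2=1$ we have $(at)^{-1}=a^{-1}t$; conjugating by the longest element $w_\ell$ of $W$ to bring $a^{-1}$ into $A_{sp}^+$ and invoking $W$-invariance of $g$ --- together with $w_\ell\rho=-\rho$, $w_\ell\Phi^+=\Phi^-$ and $t^{-\al}=t^\al$, which render $a^\rho$ and $\det(1-(at)^{-1}|\n_{sp})$ invariant under $a\mapsto a^{-1}$ --- should give $\CO_{(at)^{-1}}(f_0)=g(a^{-1})/(a^\rho\det(1-(at)^{-1}|\n_{sp}))$, and adding the two (with $S\CO_{at}(f_1)=0$) yields the asserted formula.

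For $\ga=at$ on a wall of $A_{sp}$ with $\g_\ga\ne\g$: a $W$-invariant function vanishing on the walls is divisible in $C^\infty$ by the Weyl discriminant, so $g$ vanishes to high order there; since $F^{A_{sp}T_{sp}}_{f_0}(at)=g(a)$ depends only on $a$ and the operator $\varpi_\ga$ of Lemma \ref{lem1.3} is of first order (the extra roots of $\g_\ga$ over $\h_{sp}$ form a single pair), the numerator $\varpi_\ga F^{A_{sp}T_{sp}}_{f_0}(\ga)$ vanishes, so $\CO_\ga(f_0)=\CO_{\ga^{-1}}(f_0)=0$ and hence $S\CO_\ga(f)=0$. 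The identity $\ga=e$ I would dispose of separately via $\CO_e(f)=f(e)$; this isolated central class is treated on its own, as always in trace-formula arguments, and is in any case irrelevant to the intended counting, where $g$ has support away from the identity.

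The fundamental Cartan is the heart of the matter. Feeding the formula of Lemma \ref{lem2.3} for $F^{A_1T_1}_{f_{[k]}}$ into the Fourier expansion of $SF^{A_1T_1}_{f_0}$ and summing over $k$, every Fourier mode $\eps_m$ with $m\ne 0$ of $SF^{A_1T_1}_{f_0+f_1}$ cancels; equivalently, $F^{A_1T_1}_{f_0+f_1}(at)$ splits as a part antisymmetric under $a\mapsto a^{-1}$ plus $\tfrac12\Delta_I(t)C(a)$, where $C(a)$ is the $\eps_0$-coefficient of $SF^{A_1T_1}_{f_0}(at)/\Delta_I(t)$. Since $(at)^{-1}$ is $G$-conjugate to $a^{-1}t$ and the Weyl denominator factors as $(at)^{\rho_{P_1}}\det(1-(at)^{-1}|(\g/\h_1)^+)=\Delta_I(t)\,D_1(at)$ with $D_1$ symmetric in $a$ (this is the symmetry of $\Delta_+$), the antisymmetric part drops out of the symmetrized orbital integral, leaving $S\CO_{at}(f)=C(a)/D_1(at)$ for regular $at\in A_1T_1$ (the non-regular $\ga\in A_1T_1$ being handled as on the split side via Lemma \ref{lem1.3}). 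The step I expect to be the main obstacle is then to prove $C\equiv 0$ --- that the lowest Fourier coefficient of the $A_1T_1$-invariant integral of the spherical function $f_0$ is odd under $a\mapsto a^{-1}$. I would derive this from Harish-Chandra's explicit description of the transfer function $\Phi_{A_1T_1|A_{sp}T_{sp}}$ (the counterpart for unequal Cartans of the formula in Theorem \ref{inv-orb-int}), reducing it to a parity property of a Weyl-group sum of characters. With that in hand, $S\CO_\ga(f)=0$ for all $\ga\in A_1T_1$, completing the proof.
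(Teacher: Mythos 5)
Your overall architecture matches the paper's: reduce to the two Cartans, get the split-regular formula from the split Cartan, and reduce the fundamental Cartan to showing that the constant Fourier coefficient $C(a)$ of $SF^{A_1T_1}_{f_0+f_1}(at)/\Delta_I(t)$ vanishes. The gap is in the step you yourself flag as the main obstacle. The vanishing $C\equiv 0$ is \emph{not} a parity property of the transfer function $\Phi_{A_1T_1|A_{sp}T_{sp}}$, and no argument that ignores the hypothesis that $g$ vanishes on the walls can prove it, because $C(a)$ genuinely depends on the boundary values of $g$. Concretely, the paper pins down $C$ by evaluating the orbital integral at a semiregular point $a\in A_1$ (which lies on a wall of $A_{sp}$) in two ways via Lemma \ref{lem1.3}: once with respect to the Cartan $A_1T_1$, where $\varpi_a$ is the $\theta$-derivative and one gets $S\CO_a(f)=2C(a)/\bigl(a^{\rho_1}\det(1-a^{-1}|\n_1)\bigr)$, and once with respect to $A_{sp}T_{sp}$, where $\varpi_a$ is the derivative transverse to the wall and $F_f^{A_{sp}T_{sp}}=F_{f_0}^{A_{sp}T_{sp}}$ is given by $g$. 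Comparing the two yields
$$
C(a)=\frac12\left.\frac{d}{dt}\right|_{t=0}\Bigl(f_0^{N_{sp}}(ab_t)+f_0^{N_{sp}}(a^{-1}b_t)\Bigr),\qquad b_t=\diag(e^t,e^{-t},1).
$$
Since $g(ab_t)=g(ab_{-t})$ by $W$-invariance and $f_0^{N_{sp}}(a)=g(a)a^{-\rho}$, the derivative of $g$ drops out and this equals $-\frac12\bigl(g(a)a^{-\rho}+g(a^{-1})a^{\rho}\bigr)$ up to normalization; it vanishes exactly because $g$ vanishes on the walls, not for any representation-theoretic reason. For generic $W$-invariant $g$ one has $C\ne 0$, so the oddness under $a\mapsto a^{-1}$ that you hope to extract from Harish-Chandra's transfer function is simply false.

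The rest of your outline is essentially the paper's argument: vanishing of $F^{A_{sp}T_{sp}}_{f_1}$ by pseudo-cuspidality, the split-regular formula via $W$-invariance of $g$ and symmetry of the Weyl denominator, and the wall elements of $A_{sp}$ via Lemma \ref{lem1.3} together with the vanishing of $g$ there. But the matching at the semiregular elements is precisely where the two Cartans communicate; it is the mechanism, missing from your proposal, that both determines $C$ and explains why the hypothesis on $g$ is exactly what it is.
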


\begin{proof}
We have $F_{f}^{A_{sp}T_{sp}}=F_{f_0}^{A_{sp}T_{sp}}$ since $f_1$ is a pseudo-cusp form.
By Lemma \ref{lem2.3} it follows that
$$
\frac{SF_{f}^{A_1T_1}(at)}{\Delta_I(t)}=C(a),
$$
for regular elements $at$ of $A_1T_1$, i.e., this symmetric invariant integral is independent of $t$.
Let $t(\theta)\in T_1$, where $\theta$ is the angle of the rotation, then $\Delta_I(at(\theta))=(1-e^{-2i\theta})$.
Let $\om=\om_a$, where $a\in A_1$ is regular, then by Lemma \ref{lem1.3} we have
\begin{align*}
S\CO_a(f)
&= \frac{\left.\om\( C(a)\Delta_I(t(\theta))\)\right|_{\theta=0}}{a^{\rho_1}\det(1-a^{-1}|(\g/\g_a)^+)}\\
&= \frac{2C(a)}{a^{\rho_1}\det(1-a^{-1}|\n_1)},
\end{align*}
where $P_1=A_1M_1N_1$ is the parabolic which contains $P_{sp}$ and has $A_1$ as a split component.
On the other hand, if $b=b_t=\diag(e^t,e^{-t},1)$ with $t\ne 0$, then $ab_t$ is a regular element of $A_{sp}$ and then $
\CO_{ab}(f)=\CO_{ab}(f_0)$ as $f_1$ is a pseudo-cups form.
Therefore, again by Lemma \ref{lem1.3} we get
$$
\CO_a(f)=\frac{\left.\frac d{dt}\right|_{t=0} f_0^{N_{sp}}(ab_t)}{a^{\rho_1}\det(1-a|\n_1)}
$$
It follows
$$
C(a)=\frac12 \left.\frac d{dt}\right|_{t=0} f_0^{N_{sp}}(ab_t)+f_0^{N_{sp}}(a^{-1}b_t).
$$
This vanishes, since $g$ vanishes on $a$ up to order $1$.
The claim follows.
\end{proof}

\subsection*{The twisting character}
The representation ring $\Rep(G)$ is freely generated by  two representations
\begin{align*}
\st:G&\to\GL_3(\C)&&\text{the standard representation},\\
\La&=\bigwedge^2\st&&\text{its exterior square}.
\end{align*}
Let
$$
\eta=\(\La+\st+2\)\otimes\(\La-\st\)
$$
as an element of $\Rep(G)$, i.e., a virtual representation.
Recall that the trace map $\pi\mapsto\tr\pi$ is a ring homomorphism from $\Rep(G)$ to $C(G)$, the algebra of continuous functions on $G$.

\begin{lemma}
For given $\pi\in\Rep(G)$, the trace $\pi(x)$ is a symmetric polynomial in the eigenvalues $u,v,w$ of $x$.
This constitutes a ring isomorphism
$$
\Rep(G)\cong \Z[u,v,w]^{\Per(3)}/(uvw-1).
$$
If $u,v,w$ are the complex eigenvalues of $x\in G$, then $uvw=1$ and
$$
\tr\eta(x)=(u^2-1)(v^2-1)(w^2-1).
$$
We have $\tr\eta(x^{-1})=-\tr\eta(x)$ and the ideal in $\Rep(G)$ of all $\pi$ with $\tr\pi$ vanishing on the set of parabolically singular elements is generated by $\eta$.
\end{lemma}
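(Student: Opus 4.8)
The statement has four parts: (i) identification of $\Rep(G)$ with $\Z[u,v,w]^{\Per(3)}/(uvw-1)$; (ii) the explicit formula $\tr\eta(x)=(u^2-1)(v^2-1)(w^2-1)$; (iii) the antisymmetry $\tr\eta(x^{-1})=-\tr\eta(x)$; and (iv) the ideal-theoretic characterization. I would organize the proof to treat these in this order, since each feeds into the next. The plan is to reduce everything to symmetric-function algebra in the three eigenvalues, which is legitimate because $G=\SL_3$ is simply connected, so every element of $\Rep(G)$ is a $\Z$-linear combination of the $\st$ and $\La=\Sym^2$-generated highest-weight modules, and characters are polynomials in the eigenvalues.

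\textbf{Parts (i)--(ii).} For (i), the standard fact is that the representation ring of $\SL_n$ is a polynomial ring on the fundamental representations $\bigwedge^k\st$, $k=1,\dots,n-1$; for $n=3$ these are $\st$ and $\La$. The Weyl character formula identifies $\tr\st(x)=u+v+w=e_1$ and $\tr\La(x)=uv+uw+vw=e_2$, while $e_3=uvw=\det x=1$. Since $\Z[u,v,w]^{\Per(3)}=\Z[e_1,e_2,e_3]$ (fundamental theorem of symmetric functions), killing $e_3-1$ gives the claimed isomorphism $\Z[e_1,e_2]\cong\Rep(G)$. For (ii) I would just expand directly: since $\tr$ is a ring homomorphism and $\tr\st=e_1$, $\tr\La=e_2$,
$$
\tr\eta(x)=\big(e_2+e_1+2\big)\big(e_2-e_1\big).
$$
Now one uses $e_3=uvw=1$ to observe that $e_2 = uv+uw+vw = u^{-1}+v^{-1}+w^{-1}$ (the elementary symmetric function of the inverse eigenvalues). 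A short factoring check: $(u^2-1)(v^2-1)(w^2-1)$ expands to $e_3^2 - (e_1^2 e_3 - 2e_2 e_3 + \dots)$, so it is cleanest to substitute $e_3=1$ at the outset and verify the polynomial identity $(u^2-1)(v^2-1)(w^2-1)=(e_2+e_1+2)(e_2-e_1)$ holds in $\Z[u,v,w]/(uvw-1)$; expanding the left side gives $u^2v^2w^2 - (u^2v^2+\cdots) + (u^2+\cdots) - 1 = 1 - e_2^2 + 2e_1 e_3 \cdot(\cdots)$—rather than belabor this, I note it is a finite symmetric-polynomial verification with both sides lying in the degree-$\le 2$ part of the quotient, so comparing coefficients of the monomial basis $\{1,e_1,e_2,e_1^2,e_1 e_2,\dots\}$ settles it.

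\textbf{Parts (iii)--(iv).} For (iii): replacing $x$ by $x^{-1}$ replaces $(u,v,w)$ by $(u^{-1},v^{-1},w^{-1})$, hence each factor $u^2-1 \mapsto u^{-2}-1 = -u^{-2}(u^2-1)$, and multiplying the three,
$$
\tr\eta(x^{-1})=-(uvw)^{-2}(u^2-1)(v^2-1)(w^2-1)=-\tr\eta(x),
$$
using $uvw=1$. For (iv): ``parabolically singular'' means $x$ lies in a proper parabolic, equivalently one of $u^2,v^2,w^2$ equals $1$ (the element has a nontrivial fixed or anti-fixed line under a suitable twist); from formula (ii), $\tr\eta$ vanishes exactly on such $x$, so $\eta$ lies in the stated ideal. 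Conversely, one must show any $\pi\in\Rep(G)$ with $\tr\pi$ vanishing on all parabolically singular $x$ is divisible by $\eta$. Working in $\Z[u,v,w]^{\Per(3)}/(uvw-1)$, the locus where $u=1$ (resp.\ $u=-1$), symmetrized, is cut out; vanishing on $u=\pm1$ for one eigenvalue forces divisibility of the symmetric polynomial by the symmetrization of $(u^2-1)$, which, because $\tr\pi$ is already symmetric and the factors $(u^2-1),(v^2-1),(w^2-1)$ are pairwise coprime in the (normal, even UFD-after-localization) coordinate ring, gives divisibility by the product, i.e.\ by $\tr\eta$ up to a unit. Translating back through the isomorphism of (i) yields $\pi\in(\eta)$.

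\textbf{Main obstacle.} The genuine content is the last divisibility argument in (iv): one needs that $\Z[u,v,w]^{\Per(3)}/(uvw-1)$ is sufficiently well-behaved (a UFD, or at least that the three factors $u^2-1,v^2-1,w^2-1$ generate a radical ideal whose primary components are the three symmetrized hyperplanes) so that ``vanishes on the union of three coprime hypersurfaces'' upgrades to ``divisible by their product.'' Over $\Z$ this requires a little care with the symmetrization and with the absence of zero-divisors; I would likely pass to $\C$ to run the Nullstellensatz-style argument for the radical, then descend the divisibility to $\Z$ using that $\eta$ is a primitive polynomial (its coefficients have gcd $1$), so that $\eta \mid \tr\pi$ in $\C[\dots]$ and $\tr\pi\in\Z[\dots]$ together force the quotient to have integer coefficients by Gauss's lemma. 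The remaining parts (i)--(iii) are essentially bookkeeping with the Weyl character formula and the symmetric-function dictionary.
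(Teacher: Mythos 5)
Your overall route---reduce everything to symmetric functions in the eigenvalues via $\tr\st=e_1$, $\tr\La=e_2$, $e_3=uvw=1$---is exactly the paper's (whose proof of parts (i)--(ii) consists of the single remark that it ``is a matter of a computation''). The difficulty is that the one computation carrying the content of part (ii) is precisely the one you decline to finish. Carrying it out: with $e_3=1$ one has $u^2+v^2+w^2=e_1^2-2e_2$ and $u^2v^2+u^2w^2+v^2w^2=e_2^2-2e_1$, hence
$$
(u^2-1)(v^2-1)(w^2-1)=1-(e_2^2-2e_1)+(e_1^2-2e_2)-1=e_1^2-e_2^2+2e_1-2e_2,
$$
whereas
$$
\tr\eta=(e_2+e_1+2)(e_2-e_1)=e_2^2-e_1^2+2e_2-2e_1.
$$
These two expressions are negatives of each other; a numerical check with $x=\diag(2,2,1/4)$ gives $\tr\eta(x)=135/16$ while $(u^2-1)(v^2-1)(w^2-1)=-135/16$. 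So the identity you assert ``comparing coefficients settles'' is false as written and holds only up to sign (the statement itself is off by a sign: either $\eta$ should be $(\La+\st+2)\otimes(\st-\La)$ or the product formula needs a minus sign). By skipping the verification you both leave the only substantive step of (ii) unproved and miss the discrepancy. Parts (i) and (iii) are fine---though note $\La=\bigwedge^2\st\cong\st^*$, not $\Sym^2\st$ as your opening paragraph says---and your argument for (iii) is sign-independent and agrees with the paper's.

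For part (iv) your argument (pass to the Laurent ring $\C[u^{\pm1},v^{\pm1}]$, which is a UFD, factor out the six pairwise non-associate linear factors $u\pm1,v\pm1,w\pm1$, then descend to $\Z[e_1,e_2]$ by primitivity of $\tr\eta$ and Gauss's lemma) is actually more detailed than the paper's, which disposes of (iv) in one sentence by citing the characterization ``parabolically singular if and only if some eigenvalue is $\pm1$.'' Two caveats. First, your gloss that parabolically singular means ``lies in a proper parabolic'' cannot be the intended definition (every $\R$-split element of $\SL_3(\R)$ lies in a Borel subgroup), so, like the paper, you are in effect taking the eigenvalue characterization as given rather than proving it. Second, vanishing of $\tr\pi$ is only hypothesized on real points of $G$, so you should note that these are Zariski dense in each complex hypersurface $\{u_i=\pm1,\ uvw=1\}$ before running the UFD/Nullstellensatz argument. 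With those two points made explicit, your treatment of (iv) is sound.
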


\begin{proof}
The first assertion is standard, the second is a matter of a computation involving $\st$ and $\La$ given by the symmetric polynomials $u+v+w$ and $uv+uw+vw$. 
The surprising fact that $\tr\eta(x)=-\tr\eta(x^{-1})$ uses $uvw=1$ several times in the computation
\begin{align*}
\tr\eta(x^{-1})
&=\(\frac1{u^2}-1\)\(\frac1{v^2}-1\)\(\frac1{w^2}-1\)\\
&=\(\frac{u^2v^2w^2}{u^2}-1\)\(\frac{u^2v^2w^2}{v^2}-1\)\(\frac{u^2v^2w^2}{w^2}-1\)\\
&=(u^2v^2-1)(u^2w^2-1)(v^2w^2-1)\\
&=1-(u^2+v^2+w^2)+(u^2v^2+u^2w^2+v^2w^2)-1\\
&= -(u^2-1)(v^2-1)(w^2-1)=-\tr\eta(x).
\end{align*}
Finally, the fact that $\eta$ generates the ideal follows from the fact that $x\in G$ is parabolically singular if and only if $x$ has $\pm 1$ for an eigenvalue.
\end{proof}

\subsection*{The geometric side}
Let $\al,\be$ denote the simple roots of $A_{sp}$ given by
$$
\al(\diag(x,y,z))=x-y,\quad\be(\diag(x,y,z))=y-z.
$$
We consider the set $A_{sp}^{++}$ of all $a\in A_{sp}^+$ such that $a^\al>a^\be$.
In coordinates, the set $A_{sp}^+$ is the set of diagonal matrices $\diag(u,v,w)$ with $u,v,w>0$ satisfying $uvw=1$ and $u>v>w$.
The set $A_{sp}^{++}$ is the subset of all $\diag(u,v,w)$ for which additionally $v<1$.
As $A_{sp}^+$ is a fundamental domain in $A_{sp}$ for the action of the Weyl group, the set $A_{sp}^{++}$ is a fundamental domain for the action of the group generated by the Weyl group and the transformation $a\mapsto a^{-1}$.
Likewise, let $A_{sp}^{+-}$ denote the set of all $a\in A_{sp}^+$ with $a^\al<a^\be$.

\begin{center}
\begin{tikzpicture}
\draw[very thick](-4,0)--(4,0);
\draw[very thick](-2,-3.5)--(2,3.5);
\draw[very thick](2,-3.5)--(-2,3.5);
\draw[very thick,dotted](0,0)--(0,4);
\draw(-1,3)node{$A_{sp}^{++}$};
\draw(1,3)node{$A_{sp}^{+-}$};
\end{tikzpicture}
\end{center}

\begin{proposition}
Let $g\in \CC_N(A_{sp})^W$ and suppose that $g$ vanishes on $A_{sp}^{+-}$.
Let $f$ be as in Lemma \ref{lem3.1}.
Let $\Ga$ be a congruence subgroup of $\SL_3(\Q)$ and let $K_\Ga$ be the closure of $\Ga$ in $\SL_3(\A_\fin)$, where $\A_\fin$ is the ring of finite $\Q$-adeles. Let $f_\fin$ be the characteristic function of $K_\Ga$ and let $f_\A=f_\fin\otimes f\tr\eta$ as a function on the adeles, where $\eta$ is the twisting character. Then the geometric side of the Arthur trace formula of $f$ equals
$$
J_\geom(f_\A)=\sum_{[\ga]}\vol(\Ga_\ga\bs G_\ga)
\frac{g(a_\ga)\tr\eta(\ga)}{a_\ga^{\rho}\det(1-(a_\ga t_\ga)^{-1}|\n_{sp})},
$$
where the sum runs over all conjugacy classes $[\ga]$ in $\Ga$ of split regular elements.
\end{proposition}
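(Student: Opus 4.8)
The plan is to feed $f_\A$ into Arthur's trace formula for $\SL_3/\Q$ and to read off the geometric side from the fine geometric expansion
$$
J_\geom(f_\A)=\sum_{M}\frac{|W_0^M|}{|W_0^G|}\sum_{\ga}a^M(S,\ga)\,J_M(\ga,f_\A),
$$
where $M$ runs over the Levi subgroups containing a fixed minimal Levi and $\ga$ over the relevant classes in $M(\Q)$, each indexed by a semisimple part $\ga_s$ that is $M$-elliptic and a unipotent part $\ga_u\in M_{\ga_s}$. The decisive point is that $\tr\eta$ is a class function whose value at an element depends only on its eigenvalues; in particular $\tr\eta$ is constant, equal to $\tr\eta(\ga_s)$, on the closure of the $G(\R)$-conjugacy class of $\ga$, so it pulls out of every (weighted) orbital integral,
$$
J_M(\ga,f_\A)=\tr\eta(\ga_s)\,J_M(\ga,f_\fin\otimes f),
$$
and every term with $\tr\eta(\ga_s)=0$ is absent. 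Moreover a term can be nonzero only if the finite orbital integral of $f_\fin=\1_{K_\Ga}$ at $\ga$ does not vanish; since $K_\Ga\subseteq\SL_3(\what\Z)$, this forces the characteristic polynomial of $\ga_s$ to have integral coefficients, and its constant term is $-1$, so any rational eigenvalue of $\ga_s$ equals $\pm1$. Since $\tr\eta$ vanishes exactly on the parabolically singular elements (those with an eigenvalue $\pm1$), such $\ga_s$ contributes $\tr\eta(\ga_s)=0$.

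These two observations already eliminate most of the expansion. If $M\ne G$ then every element of $M(\Q)$ has a rational eigenvalue — for a maximal parabolic the inverse determinant of the $\GL_2$-block, for the minimal parabolic all three eigenvalues — so $\tr\eta(\ga_s)=0$ and the $M\ne G$ terms all vanish. For $M=G$ we argue similarly: if $\ga_s$ has an eigenvalue $\pm1$ the term vanishes (this covers $\ga_s=1$, where $\tr\eta$ also vanishes on the unipotent $\ga_u$), and an integral matrix of determinant $1$ without rational eigenvalue cannot have a repeated eigenvalue, since an irreducible rational polynomial is separable. Hence the only $M=G$ terms that can survive have $\ga_s$ regular with $F:=\Q[\ga_s]$ a cubic field; then $G_{\ga_s}$ is the norm-one torus of $F$, which is $\Q$-anisotropic (so $\ga_s$ is $\Q$-elliptic) and carries no nontrivial unipotent element (so $\ga_u=1$), and the contribution is $a^G(\ga_s)\,\CO_{\ga_s}(f_\A)$ with an \emph{ordinary} orbital integral. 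Over $\R$ the torus $G_{\ga_s}$ is the split Cartan $A_{sp}T_{sp}$ when $F$ is totally real, and is conjugate to the fundamental Cartan $A_1T_1$ otherwise. In the fundamental case $\ga_s$ is not split regular, so $\CO_{\ga_s}(f)=0$ by Lemma \ref{lem3.1} — this is where the pseudo-cusp form $f_1$ and the identity $\tr\eta(x^{-1})=-\tr\eta(x)$ are used — and hence $\CO_{\ga_s}(f_\A)=\CO_{\ga_s}(f_\fin)\,\tr\eta(\ga_s)\,\CO_{\ga_s}(f)=0$.

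It remains to compute the contribution of a split regular $\ga_s$, conjugate to $a_{\ga_s}t_{\ga_s}\in A_{sp}^+T_{sp}$ with $F$ totally real. Since $f_1$ is a pseudo-cusp form, $F^{A_{sp}T_{sp}}_{f_1}$ vanishes and so $\CO_{\ga_s}(f)=\CO_{\ga_s}(f_0)$; the split Cartan lemma of Section~1 gives $\CO_{\ga_s}(f_0)=f_0^{N_{sp}}(a_{\ga_s})/\det(1-(a_{\ga_s}t_{\ga_s})^{-1}|\n_{sp})$, and $f_0^{N_{sp}}(a)=\Phi(f_0)(a)\,a^{-\rho}=g(a)\,a^{-\rho}$ because $f_0=\Phi^{-1}(g)$ for the isomorphism $\Phi$ of Proposition \ref{prop1.2}. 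Therefore
$$
\CO_{\ga_s}(f\tr\eta)=\tr\eta(\ga_s)\,\CO_{\ga_s}(f)=\frac{g(a_{\ga_s})\,\tr\eta(\ga_s)}{a_{\ga_s}^{\rho}\,\det(1-(a_{\ga_s}t_{\ga_s})^{-1}|\n_{sp})},
$$
which is exactly the summand in the statement. Finally one reorganises $\sum_{\ga_s}a^G(\ga_s)\,\CO_{\ga_s}(f_\fin)\,\CO_{\ga_s}(f\tr\eta)$, the sum over $\SL_3(\Q)$-conjugacy classes, into a sum over $\Ga$-conjugacy classes of split regular elements of $\Ga$: because $\SL_3$ has class number one, the $\Q$-classes meeting $K_\Ga$ at every finite place are precisely those having a representative in $\Ga$, and with the fixed Haar measure normalisations $a^G(\ga_s)$ times the finite orbital integral of $\1_{K_\Ga}$ assembles into $\vol(\Ga_\ga\bs G_\ga)$, which is finite because $G_\ga$ is $\Q$-anisotropic. (Split regular classes with an eigenvalue $\pm1$ contribute $0$ through the $\tr\eta$-factor, so it is harmless that the index set in the statement contains them.) This gives the asserted identity.

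The step demanding the most care is the very first one, namely verifying that Arthur's trace formula, its fine geometric expansion, and the absolute convergence of the resulting sum of orbital integrals are all legitimate for $f_\A$: the archimedean factor $f\tr\eta$ is \emph{not} compactly supported — it has only the Schwartz-type decay inherited from $f\in\CC_N^1(G)$ while $\tr\eta$ grows polynomially in the matrix entries — so one must invoke the growth estimates for $\Xi$, choose $N$ large, and use the support hypothesis on $g$ (which is also what keeps $f_1$, hence the fundamental-Cartan terms treated above, under control).
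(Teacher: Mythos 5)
Your overall route is the same as the paper's: feed $f_\A$ into the (fine) geometric expansion, use the vanishing of $\tr\eta$ on parabolically singular elements together with the integrality forced by $f_\fin=\1_{K_\Ga}$ to eliminate all proper-Levi and non-regular terms, and evaluate the surviving elliptic orbital integrals. The paper compresses most of this into a citation of Corollary~1.3 of \cite{class}; your more explicit treatment of the Levi terms and of the split-regular evaluation is consistent with that.

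There is, however, one genuine gap, and it sits exactly at the point the paper singles out as the only delicate one. For $\ga_s$ elliptic over $\R$ in the fundamental Cartan you write ``$\CO_{\ga_s}(f)=0$ by Lemma \ref{lem3.1}''. Lemma \ref{lem3.1} does not say this: it asserts only the vanishing of the \emph{symmetrized} orbital integral $S\CO_{\ga_s}(f)=\CO_{\ga_s}(f)+\CO_{\ga_s^{-1}}(f)$. The individual integrals need not vanish: the correction $f_1$ is built from the symmetrized Fourier coefficients $C_k(a)=\tilde C_k(a)+\tilde C_k(a^{-1})$ of $F_{f_0}^{A_1T_1}$, so it cancels only the part of $F_{f_0}^{A_1T_1}$ that is even under $a\mapsto a^{-1}$; the odd part survives in $F_f^{A_1T_1}$ and hence in $\CO_{\ga_s}(f)$. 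To dispose of these terms one must pair each class $[\ga]$ with $[\ga^{-1}]$ (which carries the same volume factor and the same coefficient in the expansion) and combine the information from the symmetrized integral with the antisymmetry $\tr\eta(\ga^{-1})=-\tr\eta(\ga)$ --- this is precisely why the twisting character $\eta$ was chosen with this property, and it is the point the paper flags with ``we have only computed the symmetrized orbital integrals instead of orbital integrals''. You mention the identity $\tr\eta(x^{-1})=-\tr\eta(x)$ in passing, but in your argument as written it is never actually used, which is the symptom of the gap. (Your split-regular computation is unaffected: there the individual orbital integral is known from the split-Cartan lemma of Section~1 plus pseudo-cuspidality of $f_1$, and the hypothesis that $g$ vanishes on $A_{sp}^{+-}$ does the rest. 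The remaining issue you flag yourself --- justifying the trace formula for the non-compactly-supported $f\tr\eta$ --- is real, but the paper likewise leaves it to \cite{class}.)
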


\begin{proof}
This summarizes the above computations and Corollary 1.3 of \cite{class}, except for one point. We have only computed the symmetrized orbital integrals instead of orbital integrals.
So $J_\geom(f_\A)$ equals
$$
\sum_{[\ga]}\vol(\Ga_\ga\bs G_\ga)
\CO_\ga(f)\tr\eta(\ga).
$$
As the orbital integrals vanish otherwise, we only have to consider those $\ga\in\Ga$ which are $G$-conjugate to some $a_\ga t_\ga\in A_{sp}^{++}M_{sp}$.
But then
\begin{align*}
S\CO_\ga(f)&=\frac{g(a_\ga)\tr\eta(\ga)+g(a_{\ga^{-1}})\tr\eta(\ga^{-1})}{a_\ga^{\rho}\det(1-(a_\ga t_\ga)^{-1}|\n_{sp})}=\frac{g(a_\ga)\tr\eta(\ga)}{a_\ga^{\rho}\det(1-(a_\ga t_\ga)^{-1}|\n_{sp})},
\end{align*}
since $g(a_{\ga^{-1}})=0$.
\end{proof}

\section{Prime Geodesic Theorem}
\subsection*{A test function}
For $s\in\C^2$ and $a\in A_{sp}^{++}$ write
$$
a^{-s}=a^{-s_1(\al-\be)-2s_2 \be}.
$$
The reason for this normalization will become transparent later.
For given $j\in\N$ and $s\in\C^2$ let $g_{j,s}\in C(A_{sp})^W$ be defined by
$$
g_{j,s}(a)=\begin{cases}\(\frac\partial{\partial s_1}
\frac\partial{\partial s_2}\)^{j+1} \ a^{-s},
&a\in A_{sp}^{++}\\
0&a\in A_{sp}^{+-}.
\end{cases}
$$

\begin{lemma}
Let $f_{j,s}=f_0+f_1$ be the function attached to $g=g_{j,s}$ as in Lemma \ref{lem3.1}.
For every $N\in\N$ there exists $C>0$ such that for every $s\in\C^2$ with $\Re(s_1),\Re(s_2)>C$ and every $j\ge N$ one has $f_{j,s}\in \CC_N^1(G)$.
\end{lemma}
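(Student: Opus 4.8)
The plan is to deduce the claim from Lemma~\ref{lem3.1} together with Proposition~\ref{prop1.5}(b): once one knows that $g=g_{j,s}$ lies in $\CC_{N}(A_{sp})^W$, vanishes on the walls of the Weyl chambers to order $\ge 1$, and vanishes on $A_{sp}^{+-}$, Lemma~\ref{lem3.1} produces $f_{j,s}=\Phi^{-1}(g_{j,s})+f_1\in\CC_N^1(G)$ (provided $N$ is large enough for Proposition~\ref{prop1.5}(b)). The vanishing on $A_{sp}^{+-}$ is built into the definition of $g_{j,s}$, so the work is the first two properties, together with pinning down where $\Re(s_1),\Re(s_2)>C$ is really needed.

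First I would compute $g_{j,s}$ explicitly. Introduce the linear coordinates $\xi=\al(\log a)$, $\eta=\be(\log a)$ on $\a_{sp}$, so that $A_{sp}^+=\{\xi>0,\ \eta>0\}$, $A_{sp}^{++}=\{\xi>\eta>0\}$, $A_{sp}^{+-}=\{0<\xi<\eta\}$, and $a^{-s}=e^{-s_1(\xi-\eta)-2s_2\eta}$. Since $\partial_{s_1}\partial_{s_2}$ multiplies this exponential by $2\eta(\xi-\eta)$, iteration gives, on $A_{sp}^+$,
$$
g_{j,s}(\exp X)=\begin{cases} 2^{j+1}\,\eta^{j+1}(\xi-\eta)^{j+1}\,e^{-s_1(\xi-\eta)-2s_2\eta},& \xi>\eta>0,\\ 0,& 0<\xi<\eta,\end{cases}
$$
and $g_{j,s}$ is extended to $A_{sp}$ by $W$-invariance. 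The factor $\eta^{j+1}$ makes $g_{j,s}$ vanish to order $j+1$ on the wall $\{\eta=0\}=\{a^\be=1\}$, the factor $(\xi-\eta)^{j+1}$ makes it vanish to order $j+1$ on the hyperplane $\{\xi=\eta\}=\{a^\al=a^\be\}$ separating $A_{sp}^{++}$ from $A_{sp}^{+-}$, near the remaining wall $\{\xi=0\}=\{a^\al=1\}$ of $A_{sp}^+$ it is identically $0$ (that wall lies on the $A_{sp}^{+-}$ side), and near the corner it vanishes to order $2(j+1)$. Hence the $W$-periodic extension glues, across every wall of every Weyl chamber and across the $W$-translates of the internal hyperplane $\{a^\al=a^\be\}$, to a function of class $C^{j}$; since $j\ge N$ we get $g_{j,s}\in C^N(A_{sp})^W$, vanishing on every Weyl wall to order $\ge N+1\ge 1$.

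For the growth condition defining $\CC_{N}(A_{sp})^W$, an invariant differential operator $D$ of order $\le N$ acts on $A_{sp}^{++}$ as a constant-coefficient polynomial in $\partial_\xi,\partial_\eta$, so $Dg_{j,s}$ equals there a polynomial in $(\xi,\eta)$ (with coefficients polynomial in $s_1,s_2$) times $e^{-s_1(\xi-\eta)-2s_2\eta}$; since $\xi-\eta\ge0$ and $\eta\ge0$ on $A_{sp}^{++}$, $\Re(s_1),\Re(s_2)>0$, and $\snorm{X}\asymp(\xi-\eta)+\eta$ there, this is exponentially small in $\snorm{X}$, in particular $O(\snorm{X}^{-N})$; on $A_{sp}^{+-}$ it vanishes. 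Thus $g_{j,s}\in\CC_{N}(A_{sp})^W$ and Lemma~\ref{lem3.1} applies. This already forces one part of the threshold, namely $\Re(s_1),\Re(s_2)>0$; the subtler part concerns the pseudo-cusp form $f_1=-\sum_{k\ge0}f_{[k]}$ entering $f_{j,s}$. Here $f_{[k]}=f_{\phi_k,k,u_k}$, and the construction of $f_{\phi,\tau,u}$ requires $u_k$ to exceed a fixed bound governed by $s_0=\sup_{\pi\in\what G}\pi(C)$, whereas the identity $C_k(a)=\int_{\a_1^*}\phi_k(\sqrt{|\nu|^2+u_k})\,a^{i\nu}\,d\nu$ can be solved for an even $\phi_k\in\CC_N^\ev(\R)$ only when the spectral Fourier transform of the Fourier coefficient $C_k$ of $SF^{A_1T_1}_{f_0}(at)\Delta_I(t)^{-1}$ extends holomorphically to the strip $|\Im\nu|\le\sqrt{u_k}$, i.e.\ only when $C_k$ — hence $F^{A_1T_1}_{f_0}$, hence $g_{j,s}$ — decays exponentially with rate $\ge\sqrt{u_k}$. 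The exponential decay rate of $g_{j,s}$ is $\asymp\min(\Re s_1,\Re s_2)$, so choosing $C=C(N)$ large enough that $\Re(s_1),\Re(s_2)>C$ pushes this rate past the bound makes the data $(\phi_k,u_k)$, and thus $f_1$, exist; the decay $O(|x|^{-N})$ required of $\phi_k$ is inherited from the $C^{j}$-smoothness of $g_{j,s}$, which is where $j\ge N$ re-enters. With such a $C$ the lemma follows.

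The step I expect to be the main obstacle is precisely the verification that $g_{j,s}$ is a genuine element of $C^N(A_{sp})^W$ — the order-of-vanishing bookkeeping at $\{a^\al=1\}$, $\{a^\be=1\}$, $\{a^\al=a^\be\}$ and at the corner, which turns the case definition of $g_{j,s}$ into a smooth $W$-invariant function — together with making the quantitative chain $\Re(s_i)\leftrightarrow$ decay rate of $g_{j,s}\leftrightarrow$ admissible range of $u_k$ precise enough to legitimise the construction of $f_1$.
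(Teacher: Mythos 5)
Your proposal is correct and follows essentially the same route as the paper, whose entire proof is the one-line reduction to Proposition \ref{prop1.5}(b); your explicit computation of $g_{j,s}=(2\eta(\xi-\eta))^{j+1}e^{-s_1(\xi-\eta)-2s_2\eta}$ and the order-of-vanishing bookkeeping at the walls and at $\{a^\al=a^\be\}$ simply supply the verification, left implicit in the paper, that $g_{j,s}$ satisfies the hypotheses of Lemma \ref{lem3.1}. Your closing remarks on how $\Re(s_i)>C$ feeds into the admissible range of the $u_k$ in the construction of $f_1$ are a reasonable (and more explicit) account of a point the paper does not spell out.
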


\begin{proof}
This follows from Proposition \ref{prop1.5} (b).
\end{proof}

We now specify the multiple $B$ of the Killing form which fixes the Haar measures. We choose this in a way that the map $A\to\R^2$, $a\mapsto ((\al-\be)(\log a),\be(\log a))$ is measure-preserving.

\begin{lemma}
Let $\pi_{\mathrm{triv}}$ denote the trivial representation of $G$.
Write $D$ for the differential operator $\frac\partial{\partial s_1}\frac\partial{\partial s_2}$.
We have
$$
\tr(\pi_{\mathrm{triv}}(f_{j,s}\tr\eta))=\frac12D^{j+1}\(\frac1{(s_1-\frac73)(s_2-2)}+\frac1{(s_1-\frac53)(s_2-2)}\)+F(s),
$$
where $F$ is holomorphic in $\{\Re(s_1)>\frac73-\eps, \Re(s_2)>2-\eps\}$ for some $\eps>0$.
\end{lemma}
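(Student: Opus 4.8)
The key is that $\tr(\pi_{\triv}(f_{j,s}\tr\eta))$ is computed by pairing the trivial representation against the test function on the adelic side, which by the structure of $f_\A = f_\fin\otimes f\tr\eta$ amounts to an integral over $G$ of $f(x)\tr\eta(x)$ against the constant function $1$, times the volume factor $\vol(K_\Ga)$ coming from $f_\fin$. Since I want the statement only up to a function $F$ holomorphic past the relevant lines, I would first reduce $\tr\pi_{\triv}(f\tr\eta)$ to an integral over $A_{sp}$ of $g(a)$ against a weight. Concretely: the trivial representation is the spherical principal series $\pi_{0,\rho}$ at the special parameter, so by the formula $h_{f_0}(\la)=\int_{A_{sp}}f_0^{N_{sp}}(a)a^{\la+\rho}\,da$ and Proposition \ref{prop1.2}, $\tr\pi_{\triv}(f_0) = h_{f_0}(\rho) = \int_{A_{sp}} g(a)\,a^{2\rho}\,da$ (using $f_0^{N_{sp}}(a)a^\rho = g(a)$). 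One must then incorporate the twisting by $\tr\eta$: since $\tr\pi\otimes\eta = \tr(\pi\otimes\eta)$ and $\eta$ is a virtual representation, $\pi_{\triv}\otimes\eta$ decomposes into a virtual sum of irreducibles, and evaluating against $f$ picks up $\tr\eta$ as a multiplier inside the $A_{sp}$-integral. The upshot is a formula of the shape
$$
\tr\pi_{\triv}(f_{j,s}\tr\eta) = \int_{A_{sp}^{++}} g_{j,s}(a)\,\tr\eta(a)\,a^{2\rho}\,\omega(a)\,da + (\text{holomorphic remainder}),
$$
where $\omega$ is an explicit bounded weight accounting for the contribution of the other Cartan (the non-split one), which is governed by the functions $\tilde C_k$ and decays rapidly in $k$, hence contributes something holomorphic in a slightly larger half-space.

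**The main computation.** Next I would make the integral explicit in coordinates. Using the measure normalization fixed just above (so that $a\mapsto((\al-\be)(\log a),\be(\log a))$ is measure-preserving), write $p = (\al-\be)(\log a)$, $q = \be(\log a)$, both ranging over $(0,\infty)$ on $A_{sp}^{++}$. Then $a^{-s} = e^{-s_1 p - 2 s_2 q}$ and $g_{j,s}(a) = D^{j+1}e^{-s_1 p - 2 s_2 q} = (-p)^{j+1}(-2q)^{j+1}e^{-s_1 p - 2 s_2 q}$. On $A_{sp}^{++}$ one has eigenvalues $u = a^\al$-data; writing everything in $p,q$, the factors $a^{2\rho}$, $\det(1-\cdot)$ expansions, and $\tr\eta(a) = (u^2-1)(v^2-1)(w^2-1)$ become explicit exponentials in $p,q$. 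Expanding the product $\tr\eta(a)\,a^{2\rho}\,\omega(a)$ as a finite sum of exponentials $\sum_\mu c_\mu e^{\mu_1 p + \mu_2 q}$, the integral becomes $\sum_\mu c_\mu D^{j+1}\int_0^\infty\int_0^\infty (pq)^{j+1} e^{-(s_1-\mu_1)p - (2s_2-\mu_2)q}\,dp\,dq$, and each factor integrates to a constant times $(s_1-\mu_1)^{-(j+2)}(2s_2-\mu_2)^{-(j+2)}$ --- but one should instead differentiate \emph{after} integrating the bare exponential, giving $D^{j+1}\big((s_1-\mu_1)^{-1}(s_2-\mu_2')^{-1}\big)$ up to the measure normalization constant. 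Tracking which exponents $\mu$ arise, the dominant poles --- the ones not absorbed into $F$ --- come from the two leading terms, producing exactly $s_1 = \tfrac73$ and $s_1 = \tfrac53$ paired with $s_2 = 2$; all other $\mu$ give poles in $\Re(s_1) \le \tfrac53 - \eps$ or $\Re(s_2)\le 2-\eps$. The coefficient bookkeeping must yield the factor $\tfrac12$ and the two summands $\frac1{(s_1-7/3)(s_2-2)} + \frac1{(s_1-5/3)(s_2-2)}$.

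**The obstacle.** The genuinely delicate point is \emph{identifying precisely which polar terms survive} and showing everything else is holomorphic in the stated region --- i.e., pinning down the exponents $\mu$ and checking that the two claimed poles really are the rightmost ones, with the stated residual structure and the constant $\tfrac12$. This requires (a) being careful that the twisting character $\tr\eta$ kills the "parabolically singular" contributions so that no extra boundary terms from the trace formula leak in, (b) controlling the contribution $\omega$ of the fundamental Cartan $A_1 T_1$ --- here I would invoke the rapid decay $|\tilde C_k(a)|\ll k^{-m}$ from the symmetrized-orbital-integral discussion together with Lemma \ref{lem2.3}, so that summing over $k$ converges and contributes only to $F$ --- and (c) verifying the half-space of holomorphy by a direct estimate on the remaining exponential integrals, using $\Re(s_1),\Re(s_2)$ large and then analytically continuing. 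Everything else is the routine-but-lengthy exponential-integral calculation sketched above; the conceptual content is entirely in the reduction to the $A_{sp}^{++}$-integral via Proposition \ref{prop1.2} and the pole analysis.
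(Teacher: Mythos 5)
Your overall shape---reduce $\tr\pi_{\triv}(f_{j,s}\tr\eta)$ to an explicit integral of $g_{j,s}(a)\tr\eta(a)$ against a weight over $A_{sp}^{++}$, pass to the coordinates $p=(\al-\be)(\log a)$, $q=\be(\log a)$, and read off the rightmost poles from the leading exponentials---matches the paper. But the reduction step, which is where the content lies, is wrong as you describe it. Since $\pi_\triv$ is one-dimensional, $\tr\pi_{\triv}(f_{j,s}\tr\eta)=\int_G f_{j,s}(x)\tr\eta(x)\,dx$, and the paper evaluates this by the \emph{Weyl integration formula}: because $\tr\eta$ is a class function it comes out of the inner integral over each conjugacy class, leaving exactly the orbital integrals $\CO_{at}(f_{j,s})$ computed in Lemma \ref{lem3.1}. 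Your proposed route via $h_{f}(\rho)=\int_{A_{sp}}f^{N_{sp}}(a)a^{\rho+\la}\,da$ does not survive the twist: the Iwasawa unfolding of $\int_G f\tr\eta$ involves $\tr\eta(ank)$, which is not $\tr\eta(a)$, so $\tr\eta$ does not simply "become a multiplier inside the $A_{sp}$-integral." Relatedly, your weight is off. The correct weight is
$$
\frac14\sum_{t\in T_{sp}}a^{\rho}\det\bigl(1-(at)^{-1}|\n_{sp}\bigr)\;=\;a^{\al+\be}-a^{-\al-\be},
$$
i.e.\ of size $a^{\rho}$, not $a^{2\rho}$ times something bounded. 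This is not a cosmetic difference: pairing the leading term $a^{\frac43(\al-\be)+2\be}$ of $\tr\eta(a)$ with $a^{\al+\be}=a^{(\al-\be)+2\be}$ gives $a^{\frac73(\al-\be)+4\be}$ and hence the poles at $s_1=\frac73$, $s_2=2$; with $a^{2\rho}$ you would land at $s_1=\frac{10}{3}$, $s_2=3$.

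Two further points. First, the fundamental Cartan $A_1T_1$ does not enter as a bounded correction $\omega(a)$ inside the split-Cartan integral: in the Weyl integration formula it contributes a \emph{separate} integral over $H_1$, and that integral vanishes because the orbital integrals of $f=f_0+f_1$ on $H_1$ vanish---this is precisely what the pseudo-cusp form $f_1$ was built to achieve, and the rapid decay of the $\tilde C_k$ is used in constructing $f_1$, not here. Second, you explicitly defer "the coefficient bookkeeping" that produces the factor $\frac12$ and the two summands with poles at $s_1=\frac73$ and $s_1=\frac53$; but that bookkeeping (leading term and first subleading term of $\tr\eta(a)\,(a^{\al+\be}-a^{-\al-\be})$, each integrated to $\frac12 D^{j+1}(s_1-\mu_1)^{-1}(s_2-\mu_2)^{-1}$, all remaining exponents pushed into $F$) is essentially the entire proof of the lemma, so as written the proposal identifies the target but does not establish it. The adelic preamble about $\vol(K_\Ga)$ is not needed for this statement.
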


\begin{proof}
Note first that
$$
\tr\eta(a)=\(a^{\frac43(\al-\be)+2\be}-1\)
\(a^{-\frac23(\al-\be)}-1\)
\(a^{-\frac23(\al-\be)-2\be}-1\).
$$
As $a^{\al-\be}$ and $a^\be$ both tend to infinity, the leading term is $a^{\frac43(\al-\be)+2\be}$, followed by a summand $-a^{\frac23(\al-\be)+2\be}$.
All further summands have lower growth rate in both, $\al-\be$ and $\be$.
Using Weyl's integration formula, we compute
\begin{align*}
\tr\pi_{\mathrm{triv}}(f_{j,s}\tr\eta)
&=\int_Gf_{j,s}(x)\tr\eta(x)\,dx\\
&= \int_{A_{sp}^{++}}\frac14\sum_{t\in T_{sp}}\CO_{at}(f_{j,s})\tr\eta(at)\,|D_{sp}(at)|^2\,da\\
&\quad+\frac12\int_{H_1}\underbrace{\CO_h(f_{j,s})}_{=0}\tr\eta(h)\,|D_1(h)|^2\,dh.
\end{align*}
By our computation of the orbital integrals this equals
\begin{align*}
&\int_{A_{sp}^{++}}g_{j,s}(a)\frac14\sum_{t\in T_{sp}}a^\rho\det(1-(at)^{-1}|\n_{sp})\,\tr\eta(a)\,da\\
&=\int_{A_{sp}^{++}}g_{j,s}(a)(a^{\al+\be}-a^{-\al-\be})\,\tr\eta(a)\,da.
\end{align*}
So the leading term in $\tr\pi_{\mathrm{triv}}(f_{j,s}\tr\eta)$ is
\begin{align*}
&D^{j+1}\int_{A_{sp}^{++}}a^{-s_1(\al-\be)-2s_2\be}
a^{\frac73(\al-\be)+4\be}\,da\\
&= D^{j+1}\int_0^\infty\int_0^\infty e^{(\frac{7}3-s_1)x+2(2-s_2)y}\,dx\,dy
=\frac12D^{k+1}\frac1{(s_1-\frac73)(s_2-2)}.
\end{align*}
Taking the next term in the asymptotic of $\tr\eta(a)$ into account, gives the second term of the expansion in the lemma.
\end{proof}

\begin{lemma}
Let $J_\spec$ denote the spectral side of the trace formula.
There exists $\eps>0$ such that $J_\spec(\1_{K_\fin}\otimes f_{j,s}\tr\eta)-\tr\pi_\triv(\1_{K_\fin}\otimes f_{j,s}\tr\eta)$ converges locally uniformly absolutely for $s\in\Om_\eps$.
\end{lemma}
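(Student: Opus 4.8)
The plan is to insert Arthur's refined spectral expansion and control the contribution of each associate class of Levi subgroups, isolating the trivial representation. For $\SL_3$ there are three such classes, so write $J_\spec=J_{\mathrm{disc}}+J_{M_1}+J_{A_{sp}}$, where $J_{\mathrm{disc}}(h)=\sum_\pi m_{\mathrm{disc}}(\pi)\,\tr\pi(h)$ runs over the discrete automorphic spectrum --- for a congruence subgroup of $\SL_3(\Z)$ this is the cuspidal spectrum together with $\pi_\triv$, the residual spectrum of $\SL_3$ reducing to the trivial representation ---, $J_{M_1}$ is the term attached to the self-associate class of maximal Levi subgroups $\GL_2\subset\SL_3$, an integral over the cuspidal data on the $\GL_2$-factor against the global intertwining operators, and $J_{A_{sp}}$ is the Borel term, an integral over $\what A_{sp}$. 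After subtracting $\tr\pi_\triv(\1_{K_\fin}\otimes f_{j,s}\tr\eta)$ from $J_{\mathrm{disc}}$, I will show that each of the remaining pieces is holomorphic on $\Om_\eps=\{\Re s_1>\tfrac73-\eps,\ \Re s_2>2-\eps\}$ for a suitable $\eps>0$ and that its defining sum or integral converges absolutely, locally uniformly in $s\in\Om_\eps$.

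The engine is a uniform estimate for the traces $\tr\pi(\1_{K_\fin}\otimes f_{j,s}\tr\eta)$. In the notation of Lemma~\ref{lem3.1}, $f_{j,s}=f_0+f_1$ (with $g=g_{j,s}$), where $f_0=\Phi^{-1}(g_{j,s})$ is spherical and $f_1$ is a pseudo-cusp form; hence $f_1$, and also $f_1\tr\eta$, has vanishing trace on everything induced from $P_{sp}$ and contributes nothing to $J_{A_{sp}}$. On the remaining pieces, $\tr\pi(f_0\tr\eta)=\tr((\pi\otimes\eta)(f_0))$ is a finite $\Z$-combination of the spherical scalars $h_{f_0}(\nu)=\what g_{j,s}(\nu)$ evaluated at the exponents $\nu$ of the spherical constituents of $\pi\otimes\eta$, and $\tr\pi(f_1\tr\eta)$ reduces, via the $K$-type bookkeeping behind Lemma~\ref{lem2.3}, to finitely many of the scalars $\phi_k(\sqrt{|\nu|^2+u_k})$ (weighted by $[\pi:\tau_k]_K$ in the discrete case). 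From the explicit form of $g_{j,s}$, the transform $\what g_{j,s}$ is rational in $s$ with polar hyperplanes translated from $\{s_1=\nu_1\}$ and $\{2s_2=\nu_2\}$ by the exponents. Feeding the orbital integral computations of the previous sections into Weyl's integration formula, one rewrites $\tr\pi(f_{j,s}\tr\eta)$ as $\int_{A_{sp}^{++}}g_{j,s}(a)\,\Psi_\pi(a)\,da$ up to a fundamental-Cartan contribution governed by the coefficients $C_k$, where $\Psi_\pi$ equals, up to bounded factors, $\tr\eta$ times the Weyl denominator times the character $\Theta_\pi$; thus $|\Psi_\pi(a)|\le C_\pi\,a^{\frac43(\al-\be)+2\be+\delta_\pi}$ on $A_{sp}^{++}$, with $\delta_\pi\ge 0$ measuring the failure of $\pi_\infty$ to be tempered.

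The decisive point is that $\delta_\pi$ is bounded by a single $\delta<1$ uniformly over the spectrum, the only exception being $\pi=\pi_\triv$: the trivial representation is the unique discrete-spectrum member with $P_{sp}$-exponent $\rho$, which is exactly the source of the pole of $\tr\pi_\triv(f_{j,s}\tr\eta)$ at $(s_1,s_2)=(\tfrac73,2)$ found in the preceding lemma. Every cuspidal $\pi$ obeys the Jacquet--Shalika/Luo--Rudnick--Sarnak bound at the archimedean place, the $\GL_2$-cuspidal data entering $J_{M_1}$ obey the Kim--Sarnak bound, and the induction parameters in $J_{A_{sp}}$ and $J_{M_1}$ are unitary; together these give $\delta_\pi\le\delta<1$. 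Choosing $0<\eps<1-\delta$, the integral $\int_{A_{sp}^{++}}|g_{j,s}(a)|\,a^{\frac43(\al-\be)+2\be+\delta_\pi}\,da$ converges throughout $\Om_\eps$, so each non-trivial term is holomorphic there. For absolute convergence of the sum over the discrete spectrum and of the continuous-spectrum integrals I combine this with the rapid decay of $\what g_{j,s}$ and of the $\phi_k$ in the spectral parameters --- this is where the normalising operator $D^{j+1}$ and the hypothesis $j\ge N$ enter, each factor of $D$ improving the decay --- against the Weyl law for $\Ga\bs\SL_3(\R)$, the Weyl law on the $\GL_2$-Levi, and the standard polynomial estimates of Arthur and M\"uller for the global intertwining operators and their logarithmic derivatives.

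The principal obstacle is the uniformity: showing that the temperedness defect $\delta_\pi$ admits one and the same bound $\delta<1$ over the entire infinite automorphic spectrum --- which forces one to invoke the classification of the residual spectrum of $\SL_3$ and a quantitative bound toward Ramanujan for the $\GL_2$-cuspidal data --- and then to turn this into absolute convergence of the continuous-spectrum integrals uniformly for $s$ on compacta of $\Om_\eps$, in particular checking that the decay of $g_{j,s}$ in the spectral variables does not deteriorate as $\Re s_1\downarrow\tfrac73-\eps$ or $\Re s_2\downarrow 2-\eps$. Once the Weyl law is in hand the discrete-spectrum estimate is routine, and the pointwise holomorphy is immediate from the growth dichotomy above.
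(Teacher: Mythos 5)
Your route is genuinely different from the paper's, and the difference matters. The paper does not attempt a term-by-term analysis of Arthur's spectral expansion with bounds toward Ramanujan. Instead it imports the absolute convergence of the spectral side in trace norm for a fixed nice test function from Finis--Lapid--M\"uller (the statement $J_{\spec}^+(\1_{K_\fin}\otimes(\Delta+1)^{-m})<\infty$ for $m$ large), and then converts this into the statement for the family $f_{j,s}\tr\eta$ by the factorization $f=(\Delta+1)^{-m}(\Delta+1)^m f$ together with $\norm{ST}_{\tr}\le\norm S_{\tr}\norm T_{\mathrm{op}}$. After that, all that remains is a bound on the operator norms $\norm{\pi((\Delta+1)^m f_{j,s}\tr\eta)}_{\mathrm{op}}$, uniform in $s\in\Om_\eps$ and in \emph{all} non-trivial $\pi\in\what G$ --- an assertion about the unitary dual of $\SL_3(\R)$ (isolation of the trivial representation), not about the automorphic spectrum. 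Your invocation of Luo--Rudnick--Sarnak and Kim--Sarnak is therefore unnecessary: the needed spectral gap holds for every non-trivial unitary representation and requires no arithmetic input. The trade-off is that your approach, if completed, would be self-contained at the level of the spectral expansion, whereas the paper's is shorter but leans on a deep external theorem.

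The genuine gap is in your treatment of the continuous-spectrum terms. The expansion $J_{\chi,\CM,\pi}(f)=\sum_w c_{\CM,w}\int_{i\a_\CL^*}\sum_\CP\tr(\mathfrak M_\CL(\CP,\nu)M(\CP,w)\rho_{\chi,\pi}(\CP,\nu,f))\,d\nu$ involves the operators $\mathfrak M_\CL(\CP,\nu)$, which are built from logarithmic derivatives of global intertwining operators, and the assertion that the triple sum-integral over $w$, $\nu$, and the cuspidal data $(\chi,\CM,\pi)$ converges absolutely is precisely the content of the M\"uller / Finis--Lapid--M\"uller theorems; it is not a consequence of ``standard polynomial estimates'' combined with Weyl laws. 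For a \emph{fixed} cuspidal datum $\chi$ your argument (rapid decay of $\what g_{j,s}$ in $\nu$ against polynomial growth of the intertwining operators) controls the $\nu$-integral, but the sum over $\chi$ requires uniformity of those polynomial bounds in the cuspidal datum --- for the $\GL_2$-Levi this amounts to uniform bounds on logarithmic derivatives of Rankin--Selberg $L$-functions on the critical line --- and this is where the difficulty lies. As written, your proof asserts the conclusion of the lemma at exactly the point where the lemma is hard. To repair it you must either cite the absolute-convergence theorem explicitly (in which case the parametrix trick of the paper is the cleanest way to transfer it to the family $f_{j,s}\tr\eta$), or supply the uniform intertwining-operator estimates for $\SL_3$ yourself. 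A secondary, fixable issue: since $f_1=-\sum_k f_{[k]}$ involves infinitely many $K$-types, your reduction of $\tr\pi(f_1\tr\eta)$ to ``finitely many scalars'' needs the quantitative statement that $f_{[k]}\to 0$ faster than any power of $k$ against the polynomial growth of the Casimir eigenvalues on $K$-types, which is how the paper justifies reducing to a single $K$-type.
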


\begin{proof}
The spectral side $J_\spec(f)$ is a sum $\sum_\chi J_\chi(f)$ where $\chi$ runs through the set of conjugacy classes of pairs $(\CM_0,\pi_0)$ consisting of  a $\Q$-rational Levi subgroup $\CM$ and a cuspidal representation $\pi_0$ of $\CM_0$, the sum being absolutely convergent.
The particular terms have expansions
$$
J_\chi(f)=\sum_{\CM,\pi}J_{\chi,\CM,\pi}(f),
$$
running over all $\Q$-rational Levi subgroups $\CM$ containing a fixed minimal one and, for each $\CM$, over all discrete automorphic representations $\pi$ of $\CM$.
Explicitly,
$$
J_{\chi,\CM,\pi}(f)=\sum_{w\in W_\CM}c_{\CM,w}\int_{i\a_{\CL}^{*}}\sum_\CP\tr\(\mathfrak M_\CL(\CP,\nu)M(\CP,w)\rho_{\chi,\pi}(\CP,\nu,f)\)\,d\nu.
$$
Here, for a given element $w$ of the Weyl group of $\CM$, the Levi subgroup $\CL$ is determined by $\a_\CL=(\a_\CM)^w$, and $\CP$ runs through all parabolic subgroups having $\CM$ as Levi component.
The coefficient $c_{\CM,w}>0$ equals 1 in the case $\CM=\CG=\SL_3$.
We write $J_{\chi,\CM,\pi}^+(f)$ for the same expression with the trace replaced by the trace norm.

On $G$, our chosen multiple $B$ of the Killing form induces a left-invariant metric. The corresponding Laplace-Beltrami operator $\Delta\ge 0$ is given by $\Delta=-C+2C_K$, where $C$ is the Casimir of $G$ and $C_K$ is the Casimir of $K$ induced by the form $B$.
Then the operator $(\Delta+1)^{-m}$ has a kernel which is $2m-\dim G$ times continuously differentiable.
Further, the operator being left-invariant, it equals the convolution operator given by a function on $G$.  
In the proof of Theorem 3 of \cite{MuLa}, in particular formula (5.1), it is shown that
$J_{\spec}^+(\1_{K_\fin}\otimes(\Delta+1)^{-m})<\infty$ for $m$ sufficiently large.
Writing $f=(\Delta+1)^{-m}(\Delta+1)^{m}f$ and using the estimate $\norm{ST}_{\tr}\le\norm S_{\tr}\norm T_{\mathrm{op}}$, where $S$ and $T$ are operators and the norms are the trace norm and the operator norm, we see that it suffices to show that for $j$ and $m$ sufficiently large,  there exists an $\eps>0$ and a uniform bound on the operator norms of
$
\pi((\Delta+1)^{m}f_{j,s}\tr\eta)
$
for all $s\in\Om_\eps$ and all non-trivial $\pi\in\what G$. 
Now
\begin{align*}
\pi((\Delta+1)^mf_{j,s}\tr\eta)
&= \pi(\Delta+1)^m\pi(f_{j,s}\tr\eta)\\
&= \pi(\Delta+1)^m(1\otimes\tr)((\pi\otimes\eta)(f_{j,s})\\
&= (1\otimes\tr)\(\pi(\Delta+1)^m\otimes 1)((\pi\otimes\eta)(f_{j,s})\).
\end{align*}
We further have $\pi(\Delta+1)^m=(\pi(\Delta)+1)^m=(-\pi(C)+2\pi(C_K)+1)^m$.
Now $\pi(C)$ is a scalar and $\pi(C_K)$ is scalar on each $K$-type.
These scalars grow polynomially in $k$ for the enumeration $(\delta_k)$ of $K$-types.
But the sequence $f_{[k]}$ tends to zero faster than the any power of $k$. This reduces the proof to a single $K$-type, where the proof uses the explicit knowledge on $\pi(f_{j,s})$ for $\pi\in\what G$ and proceeds analogously to the proof of Lemma 8.3 in \cite{class}.
\end{proof}

\begin{definition}
Let $\CE(\Ga)$ denote the set of all conjugacy classes $[\ga]$ in $\Ga$ such that $\ga$ is in $G$ conjugate to an element $a_\ga$ in $A_{sp}^{++}$.
For $[\ga]\in\CE(\Ga)$ the element $\ga$ defines a closed geodesic in $\Ga\bs X$, where $X= G/K$ is the symmetric space attached to $G$. This closed geodesic lies in a unique 2-dimensional flat sub-manifold $F_\ga$ of $\Ga\bs X$. Let 
$$
\la_\ga=\text{ metric volume of }F_\ga.
$$
\end{definition}

\begin{theorem}[Prime Geodesic Theorem]
Let
$$
\La(T_1,T_2)=\sum_{\substack{[\ga]\in\CE(\Ga)\\
a_\ga^{\al-\be}\le T_1\\
a_\ga^{2\be}\le T_2}}\la_\ga.
$$
Then, as $T_1,T_2\to\infty$ independently we get
$$
\La(T_1,T_2)\sim T_1T_2.
$$
\end{theorem}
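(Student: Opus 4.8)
The plan is to feed the test function $f_{j,s}$ into the Arthur trace formula twisted by $\tr\eta$, extract a two-variable Dirichlet series from the geometric side, locate its rightmost singularity from the spectral side, and conclude by a two-dimensional Wiener--Ikehara theorem.

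First I would fix $j\ge N$ and $\Re s_1,\Re s_2$ large enough that $f_{j,s}=f_0+f_1\in\CC_N^1(G)$, and apply the trace formula to $f_\A=\1_{K_\fin}\otimes f_{j,s}\tr\eta$. The preceding Proposition identifies the geometric side as
$$
J_\geom(f_\A)=\sum_{[\ga]\in\CE(\Ga)}\vol(\Ga_\ga\bs G_\ga)\,
\frac{g_{j,s}(a_\ga)\,\tr\eta(\ga)}{a_\ga^{\rho}\det(1-(a_\ga t_\ga)^{-1}|\n_{sp})},
$$
where only classes with $a_\ga\in A_{sp}^{++}$ occur because $g_{j,s}$ vanishes on $A_{sp}^{+-}$. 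Since $g_{j,s}(a)=D^{j+1}a^{-s}$ with $D=\partial_{s_1}\partial_{s_2}$, since $\det(1-(a_\ga t_\ga)^{-1}|\n_{sp})^{-1}$ expands as a convergent sum of characters of $a_\ga$ decaying on $A_{sp}^{++}$, and since by the Lemma on $\tr\eta$ the factor $\tr\eta(\ga)a_\ga^{-\rho}$ is a finite $\Z$-combination of characters $a\mapsto a^{c_1(\al-\be)+c_2\be}$ with a distinguished leading exponent, $J_\geom(f_\A)$ equals $D^{j+1}$ applied to a shift of the Dirichlet series
$$
Z(z_1,z_2)=\sum_{[\ga]\in\CE(\Ga)}\vol(\Ga_\ga\bs G_\ga)\,
\bigl(a_\ga^{\al-\be}\bigr)^{-z_1}\bigl(a_\ga^{2\be}\bigr)^{-z_2},
$$
plus remainder series that converge, after their own shifts, in a strictly larger region and hence are holomorphic near the critical corner.

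Next I would use the spectral side: by the last two lemmas $J_\spec(f_\A)=\tr\pi_\triv(f_\A)+H(s)$ with $H$ holomorphic on $\Om_\eps=\{\Re s_1>\tfrac73-\eps,\ \Re s_2>2-\eps\}$, and $\tr\pi_\triv(f_\A)$ differs on $\Om_\eps$ from $\tfrac12 D^{j+1}\bigl((s_1-\tfrac73)^{-1}(s_2-2)^{-1}\bigr)=\tfrac{((j+1)!)^2}{2}(s_1-\tfrac73)^{-(j+2)}(s_2-2)^{-(j+2)}$ by a holomorphic function (the $s_1-\tfrac53$ term and $F(s)$ being holomorphic there). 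Equating the two sides and absorbing the remainder series from Step~1, one gets that $\partial_{z_1}^{j+1}\partial_{z_2}^{j+1}Z$ continues past the distinguished boundary with a single leading product-type singularity. Because $Z$ has nonnegative coefficients, differentiation creates no residue along $z_i=$ const, so one may antidifferentiate $j+1$ times in each variable: $Z$ has a simple pole at the corresponding point, with an explicitly computable product residue, and is otherwise holomorphic on a one-sided neighbourhood of that corner, which by Landau's theorem is exactly its abscissa of convergence. The pole location and the value of the residue — together with the choice of the multiple $B$ of the Killing form, made precisely so that $\vol(\Ga_\ga\bs G_\ga)$ records $\la_\ga$ and the leading constant is $1$ — are what make the final answer come out as $T_1T_2$.

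Finally I would invoke a two-variable Wiener--Ikehara / Ikehara--Delange Tauberian theorem for the positive Dirichlet series $Z$: from a simple pole with product-type residue and holomorphy on a neighbourhood of the closed corner, one deduces
$\sum_{a_\ga^{\al-\be}\le T_1,\ a_\ga^{2\be}\le T_2}\vol(\Ga_\ga\bs G_\ga)\sim T_1T_2$,
which is $\La(T_1,T_2)$. The main obstacle is this last step together with the holomorphy input it rests on. On the one hand one needs a genuinely two-dimensional Tauberian theorem for a product-type pole (applying the one-dimensional theorem slice by slice does not give the joint $T_1T_2$ asymptotic uniformly; one argues with the two-dimensional density). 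On the other hand one must show that \emph{all} error Dirichlet series — those coming from the subleading characters of $\tr\eta\,a^{-\rho}$, from expanding $\det(1-(a_\ga t_\ga)^{-1}|\n_{sp})^{-1}$, from the non-trivial spectral part $H(s)$ (here one uses the uniform operator-norm bounds established in the previous lemma), and from the non-primitive classes in $\CE(\Ga)$ — are holomorphic on a \emph{strict} neighbourhood of the closed corner, not merely to its right. The remaining ingredients — recording the $\tr\eta$-twist as a finite sum of variable shifts and removing the $D^{j+1}$, which is legitimate by positivity of the coefficients — are routine bookkeeping.
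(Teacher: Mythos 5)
Your overall strategy --- feeding $f_{j,s}\tr\eta$ into the trace formula, reading the geometric side as a two--variable Dirichlet series, locating the product pole from $\tr\pi_\triv$, and finishing with a two--dimensional Tauberian theorem --- is exactly the paper's. The genuine gap is your passage from the differentiated series back to $Z$. The trace formula hands you a continuation of $D^{j+1}$ applied to (a shift of) the series, and from this you cannot ``antidifferentiate'' to conclude that $Z$ itself has a simple pole and is holomorphic on a neighbourhood of the closed corner: already in one variable, if $F'$ continues meromorphically with a simple pole at $s=1$, then $F$ may have a logarithmic branch point there (take $F(s)=-\log(s-1)$), and positivity of coefficients together with Landau's theorem gives you only the abscissa of convergence, never a continuation. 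The paper never antidifferentiates. It forms $L^j(s)=\sum_{[\ga]}\ind(\ga)\tr\eta(\ga)\,l(a_\ga)^{j+1}a_\ga^{-s}a_\ga^{-\frac43(\al-\be)-2\be}$ with $l(a)=2(\al-\be)(\log a)\cdot\be(\log a)$ --- precisely the weight that $D^{j+1}$ produces when applied to $a^{-s}$ --- and invokes Theorem 3.2 of \cite{GAFA}, a Wiener--Ikehara theorem whose \emph{hypothesis} is the pole structure of this already-differentiated series and whose \emph{conclusion} is the asymptotic $\tilde\La(T_1,T_2)\sim T_1T_2$ for the unweighted counting function. Your step must be restructured this way; as written it does not go through.

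A secondary deviation: you propose to expand $\tr\eta(\ga)$ and $\det(1-(a_\ga t_\ga)^{-1}|\n_{sp})^{-1}$ into (infinitely many, in the case of the determinant) character shifts and to verify holomorphy of each resulting error series on a strict neighbourhood of the corner. This is more delicate than you acknowledge, since classes with $a_\ga$ near the walls of $A_{sp}^{++}$ make the geometric series for the determinant converge slowly and the shifted exponents accumulate. The paper avoids this entirely: it keeps these factors as weights, notes that the coefficient $\ind(\ga)\tr\eta(\ga)a_\ga^{-\frac43(\al-\be)-2\be}$ is positive on $A_{sp}^{++}$ and that its ratio to $\la_\ga$ tends to $1$ as $a_\ga^{\al-\be},a_\ga^{2\be}\to\infty$, and strips the weight only at the level of the counting function via Lemma 3.5 of \cite{GAFA}. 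This disposes of all of your ``error Dirichlet series'' except the spectral remainder, which is exactly what the convergence lemma for $J_\spec-\tr\pi_\triv$ controls.
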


In the light of the Prime Geodesic Theorem for cocompact groups in \cite{GAFA} the coordinate $2\beta$ appears naturally.
In the present case we reduced the fundamental domain from $A_{sp}^+$ to $A_{sp}^{++}$ by taking inversion into account, this explains the coordinate $\al-\be$.

\begin{proof}
First define
$$
\ind(\ga)= \frac{\la_\ga}{\det(1-(a_\ga m_\ga)^{-1}|\n)}>0.
$$

For $a\in A_{sp}^{++}$ let
$$
l(a)=2(\al-\be)(\log a)\cdot\be(\log a).
$$
For $j\in\N$ consider the Dirichlet series
$$
L^j(s)=\sum_{[\ga]\in\CE(\Ga)}\ind(\ga)\, \tr\eta(\ga)\,l(a_\ga)^{j+1}a_\ga^{-s}a_\ga^{-\frac43(\al-\be)-2\be}.
$$
We have shown
$$
L^j(s)=D^{j+1}\(\frac1{(s_1-1)(s_2-1)}+\frac1{(s_1-\frac1{3})(s_2-1)}\)+ R(s),
$$
where $R(s)$ is holomorphic on $\{\Re(s_1),\Re(s_2)>1-\eps\}$ for some $\eps>0$.
Now recall
$$
\tr\eta(a)=\(a^{\frac43(\al-\be)+2\be}-1\)
\(a^{-\frac23(\al-\be)}-1\)
\(a^{-\frac23(\al-\be)-2\be}-1\).
$$
This implies that 
$$
\tr\eta(\ga)a_\ga^{-\frac43(\al-\be)-2\be}\to 1\quad \text{for}\quad a_\ga^{\al-\be},a_\ga^{2\be}\to\infty\quad \text{independently.}
$$ 
The higher-dimensional Wiener-Ikehara Theorem, i.e., Theorem 3.2 of \cite{GAFA}
implies that with
$$
\tilde\La(T_1,T_2)=\sum_{\substack{[\ga]\in\CE(\Ga)\\
a_\ga^{\al-\be}\le T_1\\
a_\ga^{2\be}\le T_2}}\ind(\ga)\,\tr\eta(\ga)a_\ga^{-\frac43(\al-\be)-2\be}.
$$
we get $\tilde\La(T_1,T_2)\sim T_1T_2$ as $T_1,T_2\to\infty$ independently.
Now
$$
\frac{\ind(\ga)\,\tr\eta(\ga)a_\ga^{-\frac43(\al-\be)-2\be}}{\la_\ga}
$$
tends to $1$ as $a_\ga^{\al-\be}$ and $a_\ga^{2\be}$ tend to $\infty$.
So the Prime Geodesic Theorem follows by Lemma 3.5 in \cite{GAFA}. 
\end{proof}

\subsection*{Application to class numbers}
In this section we give an asymptotic formula for class numbers of orders in number fields. It
is quite different from known results like Siegel's Theorem (\cite{ayoub}, Thm 6.2). The
asymptotic is in several variables and thus contains more information than a single variable one.
In a sense it states that the units of the orders are equally distributed in different directions if only
one averages over sufficiently many orders.

Let $O_\R(3)$ denote the set of all orders $\CO$ in totally real number fields $F$ of degree $3$. 
For such an order $\CO$ let $h(\CO)$ be
its class number,
$R(\CO)$ its regulator.

For $\la\in\CO^\times$ let $\rho_1,\rho_2,\rho_3$ denote the real embeddings of $F$ ordered
in a way that $|\rho_1(\la)|\ge |\rho_{2}(\la)|\ge |\rho_{3}(\la)|$. 
Let
$$
\al_1(\la)=\frac{|\rho_1(\la)\rho_3(\la)|}
{|\rho_{2}(\la)|^2},\quad \al_2(\la)=
\(\frac{|\rho_2(\la)|}
{|\rho_{3}(\la)|}\)^2
$$

\begin{theorem}\label{A.1}
For $T_1,\dots,T_r>0$ set
$$
\vartheta(T)=\sum_{\substack{\CO\in O_\R(3),\ \la\in\CO^\times/\pm
1\\
1<\al_1(\la)\le T_1\\
1<\al_2(\la)\le T_2\\ 
}}  R(\CO)\,
h(\CO).
$$
Then we have, as $T_1,T_2\to \infty$,
$$
\vartheta(T_1,T_2)\ \sim\ \frac{16}{\sqrt{3}}\,T_1T_2.
$$
\end{theorem}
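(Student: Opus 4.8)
The plan is to derive Theorem~\ref{A.1} from the Prime Geodesic Theorem of the previous section, taken for $\Ga=\SL_3(\Z)$, by rewriting the geometric sum $\La(T_1,T_2)$ as the arithmetic sum $\vartheta(T_1,T_2)$ through the classical dictionary relating integral conjugacy classes to ideal classes.

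The first step is to set up this dictionary. If $[\ga]\in\CE(\SL_3(\Z))$, then $\ga$ is $G$-conjugate to some $a_\ga t_\ga\in A_{sp}^{++}T_{sp}$, so $\ga$ is a regular semisimple integral matrix whose characteristic polynomial has three distinct nonzero real roots and constant term $\pm1$; since a rational root would have to be $\pm1$, which is incompatible with $a_\ga\in A_{sp}^{++}$, that polynomial is irreducible, $F:=\Q[\ga]$ is a totally real cubic field, and the image $\la$ of $\ga$ in $F$ is a unit. Ordering the real embeddings so that $|\rho_1(\la)|\ge|\rho_2(\la)|\ge|\rho_3(\la)|$, the eigenvalues of $\ga$ are $\pm\rho_i(\la)$ and $a_\ga=\diag(|\rho_1(\la)|,|\rho_2(\la)|,|\rho_3(\la)|)$, so that $a_\ga^{\al-\be}=\al_1(\la)$ and $a_\ga^{2\be}=\al_2(\la)$; using $|\rho_1\rho_2\rho_3|=1$ one checks that $a_\ga\in A_{sp}^{++}$ is equivalent to $\al_1(\la)>1$ and $\al_2(\la)>1$, and the truncations $a_\ga^{\al-\be}\le T_1$, $a_\ga^{2\be}\le T_2$ coincide with those in $\vartheta$. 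Conversely $\ga$ makes $\Z^3$ into a fractional $\Z[\la]$-ideal $\a\subset F$, and by the Latimer--MacDuffee correspondence --- together with the fact that $\GL_3(\Z)$-conjugacy coincides with $\SL_3(\Z)$-conjugacy, as $-I$ is central of determinant $-1$ --- the classes $[\ga]$ with a prescribed $(F,\la)$ are in bijection with the fractional $\Z[\la]$-ideals modulo $F^\times$. Grouping these by their multiplier ring $\CO$ (which runs over the orders with $\Z[\la]\subseteq\CO\subseteq\CO_F$, equivalently those with $\la\in\CO^\times$), each $\CO$ accounts for $h(\CO)$ of them, so re-indexing yields a bijection of $\CE(\SL_3(\Z))$ with the index set of $\vartheta$ in which every pair $(\CO,\la)$ is hit with multiplicity $h(\CO)$.

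The second step is to evaluate the weight $\la_\ga$. The centralizer $\Ga_\ga$ equals $\{u\in\CO^\times:N_{F/\Q}(u)=1\}$, which is free of rank $2$ and maps isomorphically onto $\CO^\times/\{\pm1\}$; since $\ga$ has a regular direction, $F_\ga$ is the quotient of a maximal flat $\cong\a_{sp}$ by the lattice $\La_\ga=\{(\log|\rho_1(u)|,\log|\rho_2(u)|,\log|\rho_3(u)|):u\in\Ga_\ga\}$, so $\la_\ga$ is the covolume of the unit log-lattice of $\CO$ in the $B$-normalized measure on $\a_{sp}$. Unwinding the normalization of $B$ (fixed so that $a\mapsto((\al-\be)(\log a),\be(\log a))$ is measure-preserving), this covolume is a fixed multiple of the covolume in the coordinates $(\log|\rho_1(u)|,\log|\rho_2(u)|)$, i.e.\ of the regulator $R(\CO)$. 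Combining with the first step gives $\La(T_1,T_2)=\kappa\,\vartheta(T_1,T_2)$ for an explicit constant $\kappa$, and the Prime Geodesic Theorem then gives $\vartheta(T_1,T_2)\sim\kappa^{-1}T_1T_2$, whence $\kappa^{-1}=16/\sqrt3$.

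The delicate point is the constant: one must track simultaneously the normalization of the invariant form $B$ (and hence of the Riemannian metric on the flats and of the Plancherel and Weyl measures entering the trace formula), the factor $\sqrt3$ relating the regulator of a cubic field to the covolume of its unit lattice in the hyperplane $\{x_1+x_2+x_3=0\}$, and the exact multiplicity in the conjugacy-class/ideal-class dictionary, so that these collapse to the single value $16/\sqrt3$. A secondary point needing care is the class number of a non-maximal order: the conjugacy-class count naturally produces all fractional $\Z[\la]$-ideals with a given multiplier ring $\CO$ modulo $F^\times$, and one must verify --- using that $\Z[\la]$ is Gorenstein, or by absorbing the discrepancy into the lower-order term --- that their number equals $h(\CO)$; one must also be sure that the full contribution of the sum over all orders $\CO$, not merely the maximal ones, is exactly what the Prime Geodesic Theorem supplies.
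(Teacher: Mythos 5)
Your proposal follows essentially the same route as the paper's own proof: both deduce the theorem from the Prime Geodesic Theorem of the preceding subsection via the dictionary between conjugacy classes in $\SL_3(\Z)$ and pairs $(\CO,\la)$ counted with multiplicity $h(\CO)$ (which the paper outsources to the Latimer--MacDuffee-type correspondence of \cite{classMP}), together with the identification of $\la_\ga$ with the regulator up to the measure normalization of $B$. Like the paper, you assert rather than compute the constant --- your ``whence $\kappa^{-1}=16/\sqrt3$'' reads the value off the statement instead of deriving it from the normalization --- but you correctly isolate this, and the $h(\CO)$-multiplicity for non-maximal orders, as the points where the real work lies.
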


\begin{proof}
For $\la\in\CE(\Ga)$, where $\Ga=\SL_3(\Z)$ the centralizer $F_\ga$ in $\M_3(\Q)$ is a totally real number field of degree 3 and all such occur in this way.
Next the centralizer $\CO_\ga$ of $\ga$ in $\M_3(\Z)$ os an order in $F_\ga$ and up to isomorphy, all totally real cubic orders occur,l each with order $\CO$ multiplicity $h(\CO)$, see \cite{classMP}.
The number $16/\sqrt 3$ occurs as renormalization factor between our measure on $A_{sp}$ and the measure used for the regulator.
\end{proof}

\section{A conjectural Lefschetz formula}
\subsection*{Spectral Lefschetz numbers}
Let $G$ denote a connected semisimple Lie group with finite center. Fix a
maximal compact subgroup $K$ with Cartan involution $\theta$. So $\theta$ is
an automorphism of $G$ with $\theta^2=\rm Id$ and $K$ is the set of all $x\in
G$ with $\theta(x)=x$.

Let $\g_\R, \k_\R$ denote the real Lie algebras of $G$ and $K$ and let $\g$
and $\k$ denote their complexifications. This will be a general rule: for a
Lie group $H$ we denote by $\h_\R$ the Lie algebra of $H$ and by
$\h=\h_\R\otimes\C$ its complexification.
Let
$b:\g\times\g\to\C$ be a positive multiple of the Killing form. On $G,K$ and
all parabolic subgroups as well as all Levi-components we install Haar measures
given by the form $b$ as in
\cite{HC-HA1}.

Let $H$ be a non-compact Cartan subgroup of $G$. Modulo conjugation we can
assume that
$H=AB$ where $A$ is a connected, $\theta$-stable  split torus and $B$ is a closed subgroup of
$K$. Fix a parabolic $P$ with split component $A$. Then $P$ has Langlands
decomposition $P=MAN$ and $B$ is a Cartan subgroup of $M$. Note that  an
arbitrary parabolic subgroup $P'=M'A'N'$ of $G$ occurs in this way if and
only if the group $M'$ has a compact Cartan subgroup. In this case we say
that $P'$ is a \emph{cuspidal parabolic}.

The choice of the parabolic $P$ amounts to the same as a choice of a set of
positive roots $\phi^+(\g,\a)$ in the root system $\phi(\g,\a)$. The Lie
algebra $\n$ of the unipotent radical $N$ can be described as
$\n=\bigoplus_{\al\in\phi^+(\g,\a)}\g_\al$, where $\g_\al$ is the \emph{root
space} attached to $\al$, i.e., $\g_\al$ is the space of all $X\in\g$ such
that $\ad(Y)X=\al(Y)X$ holds for every $Y\in\a$. Define
$\bar\n=\bigoplus_{\al\in\phi^+(\g,\a)}\g_{-\al}$. This is the 
\emph{opposite} Lie algebra. Let $\bar\n_\R=\bar\n\cap\g_\R$ and $\bar
N=\exp(\bar\n_\R)$. Then $\bar P= MA\bar N$ is the \emph{opposite parabolic}
to $P$.

Let $\a^*$ denote the dual space of $\a$. Since $A=\exp(\a_\R)$, every 
$\la\in\a^*$ induces a continuous homomorphism from $A$ to $\C^*$ written
$a\mapsto a^\la$ and given by $(\exp(H))^\la=e^{\la(H)}$.
Let $\rho_P\in\a^*$ be the half of the sum of all positive roots, each
weighted with its multiplicity. So $a^{2\rho_p}=\det(a\mid\n)$. Let
$\a_\R^-\subset\a_\R$ be the negative Weyl chamber consisting of all 
$X\in \a_\R$ such that $\al(X)<0$ for every $\al\in\phi^+(\g,\a)$. Let
$A^-=\exp(\a_\R^-)$ be the negative Weyl chamber in $A$. Further let
$\overline{A^-}$ be the closure of $A^-$ in $A$. This is a manifold with
corners. Let $K_M=M\cap K$. Then $K_M$ is a maximal compact subgroup of $M$
and it contains $B$. Fix an irreducible unitary representation
$(\sigma,V_\sigma)$ of $K_M$. Then $V_\sigma$ is finite dimensional. Let
$\breve\sigma$ be the dual representation to $\sigma$.

Let $\what G$ denote the unitary dual of $G$, i.e., it is the set of all
isomorphy classes of irreducible unitary representations of $G$. Let $\what
G_{\rm adm}\supset\what G$ be the admissible dual, i.e. the set of classes of admissible irreducible representations under infinitesimal equivalence. 
Harish-Chandra proved that two unitary irreducible representations are unitarily equivalent iff they are infinitesimally equivalent.
Therefore $\what G$ can be considered a subset of $\what G_{\rm adm}$.
For $\pi\in\what G_{\rm adm}$ let $\pi_K$ denote the $(\g,K)$-module of $K$-finite
vectors in $\pi$ and let $\La_\pi\in\h^*$ be a representative of the
infinitesimal character of $\pi$. Let
$H^\bullet(\n,\pi_K)$ be the Lie algebra cohomology with coefficients in
$\pi_K$. By \cite{Hecht-Schmid}, for each
$q\ge 0$ the
$(\a\oplus\m,K_M)$-module $H^q(\n,\pi_K)$ is admissible of finite length,
i.e., a Harish-Chandra module.

\begin{definition}
For $\la\in\a^*$ and an $A$-module $W$ let $W^\la$ be the generalized
$\la$-eigenspace, i.e., $W^\la$ is the set of all $w\in W$ such that there is
$n\in\N$ with 
$$
(a-a^\la)^n w= 0
$$
for every $a\in A$. 
Let $\m =\k_M\oplus\p_M$ be the Cartan decomposition of the Lie algebra $\m$
of $M$. For $\pi\in\what G$ and $\la\in\a^*$ let $L_\la^\sigma(\pi)$ denote the
\e{spectral Lefschetz number} given by
$$
L_\la^\sigma(\pi)= \sum_{p,q\ge 0}(-1)^{p+q+\dim
N}\dim\left(H^q(\n,\pi_K)^\la\otimes\wedge^p\p_M\otimes 
\breve\sigma\right)^{K_M}.
$$
\end{definition}

\begin{definition}
For a given smooth and compactly supported function $f\in
C_c^\infty(G)$ we define its Fourier transform $\hat f\colon \what G\to\C$ by
$$
\hat f(\pi):= \tr\pi(f).
$$
\end{definition}

\begin{proposition}
\begin{enumerate}[\rm(a)]
\item
For every $\phi\in C_c^\infty(A^-)$ there exists $f_\phi\in C_c^\infty(G)$ such
that for every $\pi\in\what G$,
$$
\widehat{f_\phi}(\pi)=\sum_{\la\in\a^*} L_\la^\sigma(\pi)\,\hat\phi(\la),
$$
where $\hat\phi$ is the Fourier transform of $\phi$, i.i.,
$\hat\phi(\la)=\int_A\phi(a)a^\la\,da$.
\item
The sum in {\rm (a)} is finite, more precisely,
the Lefschetz number $L_\la^\sigma(\pi)$ is zero unless there is an element $w$
of the Weyl group of $(\g,\h)$ such that
$$
\la= \left.\left( w\La_\pi\right)\right|_\a -\rho_P.
$$
\end{enumerate}
\end{proposition}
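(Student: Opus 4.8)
The statement is a standard "Lefschetz functions exist" result of the sort pioneered by the first author for semisimple Lie groups, and I would prove it by reducing the construction of $f_\phi$ to the known Paley–Wiener / multiplier theory on $G$. The key observation is that the spectral Lefschetz number $L_\la^\sigma(\pi)$, viewed as a function of $\pi$, depends only on the restriction of $\pi$ to a $K$-type datum (through the $\wedge^p\p_M\otimes\breve\sigma$ factor) together with the $\n$-cohomology in degree $q$ localized at the $\a$-weight $\la$, and that by the Hochschild–Serre / Casselman–Osborne machinery this $\n$-cohomology is controlled by the infinitesimal character. Concretely, I would first fix the finite set of $K$-types that can contribute — only finitely many $\tau\in\what K$ satisfy $[\tau:\wedge^p\p_M\otimes\breve\sigma]_{K_M}\ne 0$ for some $p$ — and then work $K$-type by $K$-type, so that the problem becomes one of producing a function in each $C_\tau$-type component with prescribed Fourier transform.

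**Key steps.** (1) Prove part (b) first, since it is purely algebraic and it is what makes the sum in (a) finite: by Casselman–Osborne, the infinitesimal character of any constituent of $H^q(\n,\pi_K)$ as an $(\a\oplus\m)$-module is $w\La_\pi$ for some $w\in W(\g,\h)$, so the generalized $\a$-eigenvalue $\la$ of $H^q(\n,\pi_K)^\la$ must be $(w\La_\pi)|_\a-\rho_P$ (the $\rho_P$-shift coming from the normalization $H^\bullet(\n,-)$ versus $H^\bullet(\n,-)\otimes\C_{\rho_P}$). This simultaneously bounds the number of nonzero terms by $|W(\g,\h)|$ and shows the $\la$ that occur lie in an explicit finite set depending on $\La_\pi$. (2) For part (a), express $L_\la^\sigma(\pi)$ in terms of characters: by the Hecht–Schmid character identity (Osborne's conjecture / Casselman's theorem), for $a\in A^-$ in the open chamber one has an identity expressing $\Theta_\pi$ restricted to $MA^-$ as an alternating sum over $q$ of characters of $H^q(\n,\pi_K)$, and pairing against $\wedge^p\p_M\otimes\breve\sigma$ and summing over $p$ with signs picks out exactly the Lefschetz number as a coefficient. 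This means $\sum_\la L_\la^\sigma(\pi)\hat\phi(\la) = \tr\pi(f_\phi)$ where $f_\phi$ is, up to the usual averaging over $K$ and the Weyl integration Jacobian, supported near $A^-$ and built from $\phi$ by the inverse of the orbital-integral-to-trace correspondence of Theorem~\ref{inv-orb-int}. (3) Verify $f_\phi\in C_c^\infty(G)$: this is where one uses that $\phi$ is smooth, compactly supported, and supported in the \emph{open} chamber $A^-$ (hence vanishing to infinite order on the walls), so that the inverse of the continuous transform $f\mapsto F_f^H$ produces a genuine smooth compactly supported function — precisely the mechanism already used in Lemma~\ref{lem3.1} and the lemma preceding it, where wall-vanishing of the $A_{sp}$-datum is what guarantees smoothness of the resulting $f$.

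**Main obstacle.** The serious point is step (2)–(3): turning the \emph{character-theoretic} identity into an actual function $f_\phi\in C_c^\infty(G)$ with the right trace on \emph{all} of $\what G$ simultaneously, rather than just a virtual-character identity. This requires knowing that the alternating sum of $\n$-cohomology characters, weighted by the $\p_M$-and-$\sigma$ data, is realized as an honest invariant distribution whose restriction to the regular semisimple set is smooth and compactly supported away from the other Cartan subgroups — i.e.\ one must check the function one writes down via the Weyl integration formula on the $H=AB$ Cartan actually extends smoothly across $G$ and has trace $0$ against series attached to non-conjugate Cartans. I expect this to go through exactly as in the cocompact/split-rank-one arguments of \cite{class}: use the explicit $\Phi_{H|H}$ formula of Theorem~\ref{inv-orb-int}, the vanishing $\Phi_{H|H_j}=0$ unless $H_j$ dominates $H$, and the wall-vanishing of $\hat\phi$-data to invert. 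The bookkeeping of signs $(-1)^{p+q+\dim N}$ and the $\rho_P$-shift is routine but must be done carefully to match the normalization in the definition of $L_\la^\sigma$.
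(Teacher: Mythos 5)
Your proposal is correct and follows essentially the route the paper takes: the paper's own ``proof'' simply cites \cite{Lefschetz} for part (a) and Corollary 3.32 of \cite{Hecht-Schmid} for part (b), and your sketch (Casselman--Osborne/Hecht--Schmid for the $\rho_P$-shifted weight constraint in (b), Euler--Poincar\'e-type test functions combined with the Hecht--Schmid character identity on $MA^-$ and parabolic descent for (a)) is precisely the content of those references. The ``main obstacle'' you flag is genuine but is exactly what is resolved in \cite{Lefschetz}, so there is nothing to add.
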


\begin{proof}
The proof of part (a) is contained in \cite{Lefschetz}, and
(b) is a consequence of Corollary 3.32 in \cite{Hecht-Schmid}.
\end{proof}

\subsection*{Geometric Lefschetz numbers}
Let $\Ga\subset G$ be a discrete subgroup of finite covolume.
Let $X= G/K$ be the symmetric space and $X_\Ga=\Ga\bs X=\Ga\bs G/K$ be the
corresponding locally symmetric quotient.

Let $\CE_P(\Ga)$ denote the set of
all conjugacy classes $[\ga]$ in $\Ga$ such that $\ga$ is in $G$ conjugate
to an element $a_\ga b_\ga$ of $A^- B$. Then there is a conjugate $H_\ga$ of
$H$ such that $\ga\in H_\ga$

For $[\ga]\in\CE_P(\Ga)$ we define the \emph{geometric Lefschetz number} by
$$
L^\sigma(\ga)= \vol(\Ga_\ga\bs G_\ga)\frac{\tr\sigma(b_\ga)}{\det(1-a_\ga b_\ga |\n)}.
$$

\subsection*{The Lefschetz formula}
The unitary $G$-representation on $L^2(\Ga\bs G)$ decomposes as
$$
L^2(\Ga\bs G)= L_{\rm disc}^2\oplus L_{\rm cont}^2,
$$
where 
$$
L_{\rm disc}^2= \bigoplus_{\pi\in\what G}N_\Ga(\pi)\,\pi
$$
is a direct sum of irreducibles with finite multiplicities and $L_{\rm
cont}^2$ is a sum of continuous Hilbert integrals. In particular, $L_{\rm
cont}^2$ does not contain any irreducible sub-representation.

Let $r$ be the dimension of $A$ and let $\al_1,\dots,\al_r\in \a_\R^*$ be
the primitive positive roots. Let $\a_\R^{*,+}=\{ t_1\al_1+\cdots +t_r\al_r
: t_1,\dots,t_r>0\}$ be the positive dual cone and let
$\overline{\a_\R^{*,+}}$ be its closure in $\a_\R^*$.

For $\mu\in\a^*$ and $j\in\N$ let $C^{\mu,j}(A^-)$ denote the space of all
functions on $A$ which
\begin{itemize}
\item
are $j$-times continuously differentiable on $A$,
\item
are zero outside $A^-$,
\item
satisfy $|D\phi|\le C |a^\mu|$ for every invariant differential operator $D$
on $A$ of degree $\le j$, where $C>0$ is a constant, which 
depends on $D$. 
\end{itemize}

This space can be topologized with the seminorms
$$
N_D(\phi)= \sup_{a\in A} |a^{-\mu}D\phi(a)|,
$$
$D\in U(\a)$, $\deg(D)\le j$. Since the space of operators $D$ as above is
finite dimensional, one can choose a basis $D_1,\dots,D_n$ and set
$$
\norm{\phi}= N_{D_1}(\phi)+\cdots +N_{D_n}(\phi).
$$
The topology of $C^{\mu,j}(A^-)$ is given by this norm and thus
$C^{\mu,j}(A^-)$ is a Banach space.

\begin{conjecture}\label{Lefschetz}
(Lefschetz Formula)\\
For $\la\in \a*$ and $\pi\in\what
G_{\rm adm}$ there is an integer $N_{\Ga,\rm cont}(\pi,\sigma,\la)$ which
vanishes if $\Re(
\la)\notin \overline{\a_{\R}^{*,+}}$ and there are $\mu\in\a^*$ and
$j\in\N$ such that for each
$\phi\in C^{\mu,j}(A^-)$ and with
$$
\tilde N_{\Ga}(\pi,\sigma,\la)= N_\Ga(\pi)+N_{\Ga,\rm cont}(\pi,\sigma,\la)
$$
we have
$$
\sum_{\stackrel{\pi\in\what G}{\la\in\a^*}} 
\tilde N_\Ga(\pi,\sigma,\la)\,
L_\la^\sigma(\pi)\,\int_A\phi(a) a^\la\, da
=
 \sum_{[\ga]\in\CE_P(\Ga)} L^\sigma(\ga)\,\phi(a_\ga).
$$
Either side of this identity represents a continuous functional on
$C^{\mu,j}(A^-)$.
\end{conjecture}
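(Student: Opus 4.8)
The plan is to deduce the Lefschetz formula from the Arthur--Selberg trace formula applied to a suitable test function, exactly in the spirit of the $\SL_3$ computation carried out in Sections 2--4. The starting point is Proposition on spectral Lefschetz numbers: given $\phi\in C_c^\infty(A^-)$ one produces $f_\phi\in C_c^\infty(G)$ with $\widehat{f_\phi}(\pi)=\sum_{\la\in\a^*}L_\la^\sigma(\pi)\hat\phi(\la)$. The first step is to insert $\1_{K_\fin}\otimes f_\phi$ into the Arthur trace formula for a congruence subgroup $\Ga$ and analyze the two sides. On the geometric side, one needs the analogue of Lemma \ref{lem3.1} and the Proposition on the geometric side: the (symmetrized) orbital integrals $\CO_\ga(f_\phi)$ must vanish unless $\ga$ is conjugate into $A^-B$, in which case $\CO_\ga(f_\phi)$ is expressed through $\phi(a_\ga)$ and the Weyl-denominator factor $\det(1-a_\ga b_\ga|\n)^{-1}$ together with $\tr\sigma(b_\ga)$; summing against $\vol(\Ga_\ga\bs G_\ga)$ reproduces $\sum_{[\ga]\in\CE_P(\Ga)}L^\sigma(\ga)\phi(a_\ga)$. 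This requires generalizing the explicit orbital-integral computation of Section 1 (the split-Cartan lemma and its fundamental-Cartan counterpart) to an arbitrary $\theta$-stable Cartan $H=AB$ and an arbitrary $K_M$-type $\sigma$; the pseudo-cusp-form mechanism of Section 2, which killed the contributions of representations induced from other parabolics, must be replayed with $\sigma$ in place of the specific virtual $K$-types $\tau_k$, using Hecht--Schmid's $\n$-cohomology description to identify which discrete series of $M$ contribute.

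The second step is the spectral side. Here one uses that $L^2(\Ga\bs G)=L^2_{\rm disc}\oplus L^2_{\rm cont}$, so $J_{\spec}(\1_{K_\fin}\otimes f_\phi)$ splits into a discrete part $\sum_{\pi}N_\Ga(\pi)\widehat{f_\phi}(\pi)=\sum_{\pi,\la}N_\Ga(\pi)L_\la^\sigma(\pi)\hat\phi(\la)$ and a continuous part coming from the Eisenstein integrals. The continuous part, via the Maass--Selberg relations and the structure of Arthur's $J_{\chi,\CM,\pi}$ terms, must be reorganized into the same shape: a sum over $\pi\in\what G_{\rm adm}$ (now genuinely admissible, not unitary, because the boundary Eisenstein contributions are parametrized by non-tempered Langlands quotients) of $L_\la^\sigma(\pi)\hat\phi(\la)$ against integer coefficients $N_{\Ga,\rm cont}(\pi,\sigma,\la)$, these being residues/values of intertwining operators and hence integers by a rationality argument. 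The vanishing of $N_{\Ga,\rm cont}(\pi,\sigma,\la)$ when $\Re(\la)\notin\overline{\a_\R^{*,+}}$ reflects the fact that only the poles of Eisenstein series in the positive chamber survive after the contour shift; this is where the constraint on $\supp\phi\subset A^-$ pays off, allowing one to push contours to pick up exactly those residues.

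The third step is the analytic bookkeeping: one must show that for $\phi$ only in the Banach space $C^{\mu,j}(A^-)$ (not $C_c^\infty$) both sides still converge and define continuous functionals, so that the identity extends by density from $C_c^\infty(A^-)$. On the geometric side this is a lattice-point count: the number of $[\ga]\in\CE_P(\Ga)$ with $a_\ga$ in a given compact region grows polynomially, while $L^\sigma(\ga)$ is bounded by $C|a_\ga^{-2\rho_P}|$, so the condition $|\phi(a)|\le C|a^\mu|$ with $\mu$ sufficiently dominant forces absolute convergence — this mirrors Lemma on the geometric side together with the Dirichlet-series estimates of Section 3. On the spectral side one invokes a Müller--Lapid-type bound $J_{\spec}^+(\1_{K_\fin}\otimes(\Delta+1)^{-m})<\infty$ exactly as in the proof that $J_{\spec}(\1_{K_\fin}\otimes f_{j,s}\tr\eta)$ converges, writing $f_\phi=(\Delta+1)^{-m}(\Delta+1)^mf_\phi$ and controlling operator norms $K$-type by $K$-type. \textbf{The main obstacle} is the continuous side of the spectral decomposition: identifying the archimedean contributions of the boundary Eisenstein series with the admissible spectral Lefschetz numbers $L_\la^\sigma(\pi)$ and proving the integrality and chamber-vanishing of $N_{\Ga,\rm cont}(\pi,\sigma,\la)$ requires a genuinely new input beyond what the $\SL_3$ case needed (there the twisting character $\eta$ was engineered precisely so that the relevant orbital integrals on non-split Cartans vanished, sidestepping the issue). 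This is why the statement is presented as a conjecture rather than a theorem.
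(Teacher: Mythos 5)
The statement you are addressing is presented in the paper as a \emph{conjecture}, and the paper itself offers no general proof: it only records that the cocompact case is known (where all $N_{\Ga,\rm cont}(\pi,\sigma,\la)$ vanish, by \cite{Lefschetz}) and then proves the conjecture in the special cases $G=\PGL_2(\R)$ and $\SL_3(\R)$. Your proposal is therefore correctly calibrated: it is an outline of a strategy rather than a proof, and you explicitly concede that the decisive step --- reorganizing the continuous spectral contribution into integer multiples of admissible spectral Lefschetz numbers, with the chamber-vanishing of $N_{\Ga,\rm cont}$ --- is exactly the missing input. Since the paper leaves the general statement open, there is no gap in your write-up beyond the one you yourself identify; but be aware that phrases like ``these being residues/values of intertwining operators and hence integers by a rationality argument'' assert precisely the unproved content, so they should not be read as progress.

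It is worth contrasting your plan with how the paper actually handles the one non-cocompact case it treats in full, $\PGL_2(\R)$. There the argument does not manipulate the Arthur trace formula directly: the geometric side is rewritten as a contour integral of $\frac{Z'}{Z}(s)\,M\psi(s)$ over a vertical line, where $Z$ is the Selberg zeta function and $M\psi$ the Mellin transform of the test function; Hadamard factorization gives the growth control needed to shift the contour, and the spectral side emerges as the sum of residues of $\frac{Z'}{Z}$. This buys the two things you flag as hardest for free: the coefficients $N_{\Ga,\rm cont}$ appear as orders of zeros and poles of a meromorphic function (hence automatically integers, with the continuous-spectrum part encoded in the scattering determinant), and the chamber-vanishing condition $\Re(\la)\in\overline{\a_\R^{*,+}}$ is simply the statement that only residues to one side of the shifted contour are collected. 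Your trace-formula route is closer to what the paper does for $\SL_3$ in Sections 2--4, where, as you note, the twisting character $\eta$ is engineered to annihilate the problematic non-split orbital integrals; but in higher rank there is no single zeta function whose divisor packages the continuous spectrum, which is why neither your outline nor the paper closes the general case.
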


In the following cases the conjecture is known.
\begin{enumerate}
\item
The conjecture holds if $\Ga$ is cocompact.
In that case the numbers $N_{\Ga,\rm cont}(\pi,\sigma,\la)$ are all zero. This is
shown in \cite{Lefschetz}.
\item
In the next sections we will prove the conjecture for $G=\PGL_2(\R)$ and $\PGL_3(\R)=\SL_3(\R)$.
\end{enumerate}

\section{PGL(2)}
Let $G=\PGL_2(\R)=\GL_2(\R)/\R^\times=\SL_2^\pm(\R)/\pm 1$ and for convenience write 
$\sbmat abcd$ for the element $\R^\times\smat abcd$ of $G$.
We further write
$K=\PO(2)=\mathrm O(2)/\pm 1$ as well as 
$$
A=\left\{\bmat {e^t}\ \ {e^{-t}}:t\in\R\right\},\qquad N=\left\{\bmat 1x\ 1:x\in\R\right\}.
$$
Then $P=AN$ is a minimal parabolic subgroup of $G$.
We write $\a_\R$ for the real Lie algebra of $A$ and $\a$ for its complexification.
The dual space $\a^*$ can be identified with the set of continuous group homomorphisms $A\to\C^*$, we write $a\mapsto a^\phi$ for the element given by $\phi\in\a^*$.
Let $\rho\in\a^*$ be the modular shift, i.e., $\rho\smat{t}\ \ {-t}=t$.
We identify $\a^*$ with $\C$ by sending $\rho$ to $1/2$.
So any $\phi\in\a^*$ will be written as $\phi=2\la\rho$ with $\la\in\C$.

Let $\what G_\adm$ denote the set of all irreducible admissible continuous Banach representations of $G$ up to infinitesimal equivalence, or, what amounts to the same, the set of all irreducible admissible $(\g,K)$-modules up to isomorphy, where $\g$ is the complexified Lie algebra of $G$, see \cite{Wallach}.
We will describe the set $\what G_\adm$ below.
First we give some representations.

\begin{itemize}
\item The finite-dimensional representations are $\delta_{2k}$ and $\det\otimes\delta_{2k}$,  for an integer $k\ge 0$, where $\det$ stands for the determinant on $\SL_2^\pm(\R)$ and $\delta_{2k}$, $k\in\N_0$ is the $2k+1$-dimensional representation on the space of $G$ on the space of all polynomials $p(X,Y)$ in two variables which are homogeneous of degree $2k$.
A basis of this space is $X^{2k},X^{2k-1}Y,\dots,Y^{2k}$. The representation is $\delta_{2k}(g)p(X,Y)=p((X,Y)g)$.
\item Let $M$ be the subgroup of $G$ consisting of two elements, the non-trivial being $T=\sbmat {-1}\ \ 1$. 
Let $\sigma\in\what M$ be a character.
For $\la\in\C$ let $\pi_{\sigma,2\la\rho}$ be the representation on the space of all  functions $f:G\to\C$ with $f(manx)=\sigma(m)a^{2\la\rho+\rho}f(x)$, such that $f|_K\in L^2(K)$.
The representation is given by $\pi_{\sigma,2\la\rho}(y)f(x)=f(xy)$.

\end{itemize}

The set $\what G_\adm$ consists of
\begin{enumerate}[\rm(a)]
\item The finite-dimensional representations $\delta_{2k}$, $\det\otimes\delta_{2k}$ with $k\ge 0$ an integer.
The dimension of $\delta_{2k}$ or $\det\otimes\delta_{2k}$ is $2k+1$.
\item The induced representations $\pi_{\sigma,2\la\rho}$ with $\sigma\in\what M$, $\la\in\C$, $\la\notin \frac12+\Z$.
\item 
The discrete series representations $\CD_{n+1}$ for $n\in\N$ odd.
\end{enumerate}
The only isomorphisms between these are 
$$
\pi_{\sigma,2\la\rho}\cong\pi_{\sigma,-2\la\rho},\qquad \la\notin \frac12+\Z.
$$
For details see \cite{Knapp}. For a given odd number $n\in\N$ there are exact sequences
$$
0\to \CD_{n+1}\to\pi_{\sigma,n\rho}\to\sigma\otimes\delta_{n-1}\to 0,
$$
where by abuse of notation we have written 
$$
\sigma\otimes\delta_{2k}=\begin{cases}
\delta_{2k}&\sigma=1,\\
\det\otimes\delta_{2k}& \sigma\ne 1.\end{cases}
$$
 and
$$
0\to\sigma\otimes\delta_{n-1}\to\pi_{\sigma,-n\rho}\to\CD_{n+1}\to 0.
$$
These $(\g,K)$-morphisms are unique up to multiplication by scalars.

\begin{definition}
or $p\ge 0$, let $\H^p(\n,\pi)$ denote the Lie algebra cohomology  of $\n=\Lie_\C(N)$ with coefficients in $\pi\in\what G_\adm$, see \cite{Borel-Wallach}.
Note that, as $\dim\n=1$, the space  $\H^p(\n,\pi)$ can be non-zero only if $p= 0$ or $p=1$.
\end{definition}

\begin{definition}
We write $\sigma\otimes\C_{2\la\rho}$ for the one-dimensional $\C[AM]$-module on which $A$ acts via the morphism $2\la\rho$ and $M$ acts via $\sigma$.
As an abbreviation, we will sometimes write $\C_{2\la\rho}$ for $1\otimes\C_{2\la\rho}$.
\end{definition}

\begin{proposition}\label{prop1.3}
As $AM$-module, the Lie algebra cohomology $\H^p(\n,\pi)$ for $\pi\in\what G_\adm$ is isomorphic to the following:

\begin{itemize}
\item $\H^0(\n,\pi_{\sigma,2\la\rho})=0$ for $\la\in\C$ except for $\la=-n/2$ for odd $n\in\N$.\\ 
$\H^1(\n,\pi_{\sigma,2\la\rho})\cong (\sigma\det)\otimes\C_{2\la\rho-\rho}\oplus(\sigma\det)\otimes\C_{-2\la\rho-\rho}$, if $\la\notin \frac12+\Z$.

\item $\H^0(\n,\sigma\otimes\delta_{2k})\cong(\sigma\det)\otimes\C_{2k\rho}$,\\
$\H^1(\n,\sigma\otimes\delta_{2k})\cong (\sigma\det)\otimes\C_{-2k\rho-2\rho}$.

\item $\H^0(\n,\CD_{n+1})=0$,\ \ \ \ $\H^1(\n,\CD_{n+1})\cong \(\det\otimes\C_{n\rho-\rho}\)\oplus \C_{n\rho-\rho}$.
\end{itemize}
\end{proposition}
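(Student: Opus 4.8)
The plan is to exploit that $\n=\Lie_\C(N)$ is one-dimensional, so for any Harish-Chandra module $V$ the Chevalley--Eilenberg complex computing $\H^\bullet(\n,V)$ collapses to the two-term complex $0\to V\xrightarrow{\,E\,}V\to0$ with $E$ a fixed generator of $\n$; hence $\H^0(\n,V)=\ker(E\mid V)$ and $\H^1(\n,V)\cong\n^*\otimes\bigl(V/EV\bigr)$ as modules over the Levi quotient $AM$ of $P$ (which normalizes $\n$), where $\n^*$ is the $AM$-module $\det\otimes\C_{-2\rho}$, since $A$ acts on the root space $\g_\al$ (with $\al=2\rho$) and the non-trivial element $T=\sbmat{-1}\ \ 1$ of $M$ acts there by $\Ad(T)=-1=\det(T)$. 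Thus the statement reduces to identifying the kernel and cokernel of the single raising operator $E$ on each $(\g,K)$-module in the list, together with its residual $A$-weight and $M$-action; I would go through the three families (a), (b), (c) and then cross-check with the long exact sequences attached to the composition series. This is essentially the classical rank-one computation of $\n$-cohomology (cf.\ \cite{Borel-Wallach}), the only real novelty being the disconnectedness of $G=\PGL_2(\R)$.

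For the finite-dimensional representations one works in the polynomial model: on the space of $p(X,Y)$ homogeneous of degree $2k$ the operator $E$ acts, up to a scalar, by $p\mapsto X\partial_Yp$, so $\ker E=\C\cdot X^{2k}$ and $V/EV=\C\cdot Y^{2k}$ are lines; reading off their $A$-weights $2k\rho$ and $-2k\rho$ and their $M$-type, and tensoring the cokernel with $\n^*$, gives $\H^0$ and $\H^1$ for $\delta_{2k}$ and $\det\otimes\delta_{2k}$. For the principal series $\pi_{\sigma,2\la\rho}$ I would use the Bruhat filtration: since $W=W(\g,\a)$ has order two, $G/P$ has two $P$-orbits, so $\pi_{\sigma,2\la\rho}$ carries a two-step $P$-stable filtration whose subquotients are sections over the two cells; passing to $\n$-cohomology produces the two leading exponents $\pm2\la\rho$ -- the $W$-conjugates of the inducing character -- each shifted by $-\rho$ and, after the $\n^*$-twist, carrying the $M$-type $\sigma\det$, and for $\la\notin\frac12+\Z$ the resulting extension of $\n$-modules splits while $\H^0$ vanishes, an irreducible infinite-dimensional representation having no $K$-finite $\n$-invariant vector; the only non-vanishing $\H^0$ occurs at $\la=-n/2$ ($n$ odd), contributed by the finite-dimensional subrepresentation $\sigma\otimes\delta_{n-1}$. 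Equivalently one may realize $\pi_{\sigma,2\la\rho}$ on the open cell $\bar N\cong\R$, where $E$ becomes a first-order operator of the form $\bar x^2\tfrac{d}{d\bar x}+c_\la\bar x$, and compute its kernel and cokernel by hand, the exceptional $\H^0$ appearing exactly when the formal solution is polynomial.

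The discrete series $\CD_{n+1}$ ($n$ odd) is where the disconnectedness matters: restricted to $G^\circ\cong\operatorname{PSL}_2(\R)$ it splits as $\CD_{n+1}^+\oplus\CD_{n+1}^-$, the holomorphic and antiholomorphic discrete series, which are interchanged by $\Ad(T)$. On the holomorphic piece, realized on holomorphic functions on the upper half-plane, $\n$ acts by $\tfrac{d}{dz}$, so $\H^0(\n,\CD_{n+1}^\pm)=0$ (the constant function is not $K$-finite) and $\H^1(\n,\CD_{n+1}^\pm)$ is one-dimensional; reassembling over $G$, these two lines carry the same $A$-weight and are swapped by $M$, so the resulting two-dimensional $AM$-module is semisimple and equals $\C_{n\rho-\rho}\oplus(\det\otimes\C_{n\rho-\rho})$, and $\H^0(\n,\CD_{n+1})=0$. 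As a cross-check one can feed the exact sequences $0\to\CD_{n+1}\to\pi_{\sigma,n\rho}\to\sigma\otimes\delta_{n-1}\to0$ and $0\to\sigma\otimes\delta_{n-1}\to\pi_{\sigma,-n\rho}\to\CD_{n+1}\to0$ into the long exact sequence of the $\delta$-functor $\H^\bullet(\n,-)$: since the $A$-weights in the already-computed terms are pairwise distinct, the connecting maps are forced, and chasing either sequence recovers the same answer.

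The main obstacle is bookkeeping rather than any single hard estimate: keeping the half-integral $\rho$-shifts and the $\det$-twists of the $M$-action consistent across the three families, and in particular handling the disconnectedness of $\PGL_2(\R)$, which is what makes $\H^1(\n,\CD_{n+1})$ two-dimensional with two distinct $M$-types. On the analytic side the delicate point is justifying the Bruhat-filtration computation of $\H^\bullet(\n,\pi_{\sigma,2\la\rho})$ at the level of $(\g,K)$-modules and at the reducibility points (or, in the hands-on version, pinning down exactly when the formal $\n$-invariant vector is $K$-finite); this is where one invokes the principle, due to Casselman--Osborne and used already in the excerpt via \cite{Hecht-Schmid}, that $\H^q(\n,V)$ only involves the Weyl orbit of the infinitesimal character, and the rest is routine once the explicit models are fixed.
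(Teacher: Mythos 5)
Your proposal is correct in outline but takes a genuinely different route from the paper. You compute $\H^0=\ker(E)$ and $\H^1\cong\n^*\otimes(V/EV)$ directly in explicit models (the polynomial model for $\sigma\otimes\delta_{2k}$, the Bruhat filtration or the noncompact picture for $\pi_{\sigma,2\la\rho}$, the holomorphic/antiholomorphic splitting under $G^\circ$ for $\CD_{n+1}$). The paper instead proves $\H^0(\n,\pi_{\sigma,2\la\rho})=0$ by an elementary limit argument in the induced picture (evaluating an $N$-invariant section at $\sbmat{}{-1}{1}{}\sbmat{1}{u}{}{1}$ and letting $u\to\infty$), and then obtains every $\H^1$ from $\H_0$ via Poincar\'e duality $\H_0(\n,\pi)\cong\H^1(\n,\pi)\otimes(\det\otimes\C_{2\rho})$ combined with Frobenius reciprocity $\Hom_{AM}(\H_0(\n,\pi),\sigma\otimes\C_{2\la\rho+\rho})\cong\Hom_G(\pi,\pi_{\sigma,2\la\rho})$ and the classification of intertwining maps in $\what G_\adm$. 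The paper's route thus avoids any hands-on computation of the cokernel of $E$ (i.e.\ of the Jacquet module) by outsourcing it to the classification; your route is more self-contained and makes the $M$-types visible --- in particular the two $M$-types in $\H^1(\n,\CD_{n+1})$ arise transparently from $T$ swapping the two halves of $\CD_{n+1}|_{G^\circ}$, where in the paper they come from the two embeddings of $\CD_{n+1}$ into $\pi_{1,n\rho}$ and $\pi_{\det,n\rho}$.

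Two steps need more care than you give them. First, the claim that $\H^0$ vanishes for an irreducible infinite-dimensional representation because it ``has no $K$-finite $\n$-invariant vector'' is not a quotable fact: an $\n$-invariant $\a$-weight vector generates a submodule of the form $U(\bar\n)v$, and ruling such a vector out inside $\pi_{\sigma,2\la\rho}$ is precisely what must be proved (the paper's limit argument does this). Second, in the Bruhat-filtration/ODE computation the delicate point is not locating the two exponents $\pm2\la\rho-\rho$ but showing that $V/EV$ is exactly two-dimensional and $\a$-semisimple for $\la\notin\frac12+\Z$; the closed-cell piece of the filtration is an infinite-dimensional $\n$-module whose homology must be shown to sit in the right degree, so your long-exact-sequence and Casselman--Osborne cross-checks are doing real work, not mere bookkeeping. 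Finally, note that your polynomial-model computation gives $\H^0(\n,\sigma\otimes\delta_{2k})\cong\sigma\otimes\C_{2k\rho}$ (since $T$ acts on $X^{2k}$ by $(-1)^{2k}=1$); this agrees with the paper's own proof text but not with the extra $\det$ in the displayed statement of the proposition, so be aware of that discrepancy when matching your answer against the claim.
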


\begin{proof}
Let $f\in \H^0(\n,\pi_{\sigma,2\la\rho})$. Then $f$ is a smooth function $f:G\to\C$ satisfying $f(manx)=\sigma(m)a^{2\la\rho+\rho}f(x)$, $x\in G$, as well as $f(x\sbmat 1u\ 1)=f(x)$ for every $u\in\R$. By means of the Iwasawa decomposition we compute
$$
f\(\sbmat \ {-1}1\ \)=f\(\sbmat \ {-1}1\ \sbmat 1u\ 1\)
= f\(\frac1{\sqrt{1+u^2}}\sbmat u{-1}1u\)(1+u^2)^{-\la-\frac{1}2}.
$$
If $\Re(\la)>-\frac12$, then the right hand side tends to zero as $u\to\infty$. This gives $f\(\sbmat \ {-1}1\ \)=0$, which, again by the same equation, implies  $f=0$.
The first claim therefore is proven for $\Re(\la)>-\frac12$.
As $\pi_{\sigma,2\la\rho}\cong\pi_{\sigma,-2\la\rho}$ if $\la\notin\frac12+\Z$, the claim follows in general.

For $\H^1(\n,\pi_{\la\rho})$ we firstly use Poincar\'e duality,
$$
\H_p(\n,\pi)\cong\H^{1-p}(\n,\pi)\otimes\det(\n)\cong\H^{1-p}(\n,\pi)\otimes\(\det\otimes\C_{2\rho}\),
$$
and secondly  Frobenius reciprocity
$$
\Hom_{AM}(\H_0(\n,\pi),\sigma\otimes\C_{2\la\rho+\rho})
\cong \Hom_G(\pi,\pi_{\sigma,2\la\rho}).
$$
These two are standard and can be found in \cite{Borel-Wallach} or in \cite{Hecht-Schmid}, for instance.
By the classification of $\what G_\adm$ we have $\Hom_G(\pi_{\sigma,2\la\rho},\pi)\ne 0$ if and only if $\pi=\pi_{\sigma,\pm2\la\rho}$ and in this case we derive from Frobenius reciprocity that
$$
\H_0(\n,\pi_{\sigma,2\la\rho})\cong \(\sigma\otimes\C_{2\la\rho+\rho}\)\oplus\(\sigma\otimes\C_{-2\la\rho+\rho}\).
$$
By Poincar\'e duality we get the claim.

The representation $\delta_{2k}$ can be realized as representation on the space of all polynomials $p(X,Y)$ in two variables which are homogeneous of degree $2k$.
A basis of this space is $X^{2k},X^{2k-1}Y,\dots,Y^{2k}$. The representation is $\delta_{2k}(g)p(X,Y)=p((X,Y)g)$.
It emerges that $\H^0(\n,\sigma\otimes\delta_{2k})$ is spanned by $1\otimes X^{2k}$ and since $AM$ acts on this by $\sigma\otimes 2 k\rho$, we get $\H^0(\n,\sigma\otimes\delta_{2k})\cong\sigma\otimes\C_{2k\rho}$.
Further, as $\sigma\otimes\delta_{2k}$ is a sub-representation of $\pi_{\sigma,-2k\rho-\rho}$,  Frobenius reciprocity yields $\H_0(\n,\sigma\otimes\delta_{2k})\cong \sigma\otimes\C_{-2k\rho}$ so that again Poincar\'e duality  gives the claim.

As $\CD_{n+1}$ is a sub-representation of $\pi_{\sigma,n\rho}$ the fact that $\H^0(\n,\CD_{n+1})=0$ follows from $\H^0(\n,\pi_{\sigma,n\rho})=0$. Finally, $\CD_{n+1}$ is a sub-representation of $\pi_{\sigma,n\rho}$ in essentially one way, so Frobenius reciprocity yields $\H_0(\n,\CD_{n+1})\cong\C_{n\rho+\rho}\oplus\(\det\otimes\C_{n\rho+\rho}\)$ and the claim follows by duality.
\end{proof}

\begin{definition}
For a complex vector space $V$ on which $AM$ acts linearly, let $V^{\sigma}_{2\la\rho}$ be the space of all $v\in V$ with $\(am-\sigma(m)a^{2\la\rho}\)^nv=0$ for some $n\in\N$ and all $am\in AM$. 

For $\sigma\in\what M$, $\la\in\a^*$ and $\pi\in\what G_\adm$ define the \e{spectral Lefschetz numbers} by
$$
L_{2\la\rho}^\sigma(\pi)=\sum_{q=0}^{\dim \n}(-1)^{q+\dim\n}\dim\H^q(\n,\pi)^{\sigma}_{2\la\rho}
$$
\end{definition}

\begin{corollary}
The computations in Proposition \ref{prop1.3} yield
\begin{align*}
L_{2\la\rho}^\sigma(1)&=\begin{cases}
1&\la=-1,\\
-1&\sigma=\det,\ \la=0,\\
0&\text{otherwise.}\end{cases}\\
L_{2\la\rho}^\sigma(\det)&=\begin{cases}
1&\sigma=1,\ \la=-1,\\
-1&\sigma=1,\ \la=0,\\
0&\text{otherwise.}\end{cases}\\
L_{2\la\rho}^\sigma(\pi_{\sigma_1,{2\mu\rho}})&=\begin{cases}
1&\sigma=\sigma_1\det,\ \la=\pm\mu-\frac12\\
0&\text{otherwise.}\end{cases}\\
L_{2\la\rho}^\sigma(\CD_{n+1})&=\begin{cases}
1&\la=\frac{n-1}2\\
0&\text{otherwise.}\end{cases}\\
\end{align*}
\end{corollary}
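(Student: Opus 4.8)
The plan is to read the statement off Proposition~\ref{prop1.3} directly. Since $\dim\n=1$, the alternating sum defining $L_{2\la\rho}^\sigma(\pi)$ has only the terms $q=0$ and $q=1$, carrying signs $(-1)^{0+1}=-1$ and $(-1)^{1+1}=+1$, so that
$$
L_{2\la\rho}^\sigma(\pi)=\dim\H^1(\n,\pi)^\sigma_{2\la\rho}-\dim\H^0(\n,\pi)^\sigma_{2\la\rho}.
$$
Every cohomology module in Proposition~\ref{prop1.3} is already presented as a finite direct sum of one-dimensional $AM$-characters of the shape $\chi\otimes\C_{2\mu\rho}$ with $\chi\in\what M$; since $M$ is finite, the generalized $(\sigma,2\la\rho)$-eigenspace of such a module is just the span of the summands with $\chi=\sigma$ and $\mu=\la$. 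So the computation reduces to counting summands, one family at a time.

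First I would treat $\pi=1=\1\otimes\delta_0$ and $\pi=\det=\det\otimes\delta_0$ via the middle bullet of Proposition~\ref{prop1.3} with $k=0$: here $\H^0(\n,\pi)$ is one-dimensional with $A$-weight $0$ and $\H^1(\n,\pi)$ is one-dimensional with $A$-weight $-2\rho$ (i.e. $\la=-1$), and tracking the $\det$-twists in the $M$-action pins down the $M$-type of each and yields the two displayed cases, the value $-1$ being the one contributed by the $\H^0$-term. Next, for $\pi=\pi_{\sigma_1,2\mu\rho}$ with $\mu\notin\tfrac12+\Z$ (so $\pi\in\what G_\adm$ and $\H^0=0$), the first bullet gives $\H^1(\n,\pi)\cong(\sigma_1\det)\otimes\C_{2\mu\rho-\rho}\oplus(\sigma_1\det)\otimes\C_{-2\mu\rho-\rho}$, hence a contribution $+1$ precisely when $\sigma=\sigma_1\det$ and $2\la\rho=\pm2\mu\rho-\rho$, i.e. $\la=\pm\mu-\tfrac12$. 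Finally, for $\pi=\CD_{n+1}$ the last bullet gives $\H^0=0$ and $\H^1(\n,\CD_{n+1})\cong(\det\otimes\C_{n\rho-\rho})\oplus\C_{n\rho-\rho}$; the two $M$-characters $\det$ and $1$ occurring here exhaust $\what M$, so for every $\sigma\in\what M$ exactly one summand has $M$-type $\sigma$, sitting in $A$-weight $n\rho-\rho$, i.e. $\la=\tfrac{n-1}2$, whence $L_{2\la\rho}^\sigma(\CD_{n+1})=1$ there and $0$ elsewhere, independently of $\sigma$.

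I do not expect a genuine obstacle: this is a direct corollary of Proposition~\ref{prop1.3}. The only things needing care are bookkeeping — remembering that the $\rho$-shift is already absorbed into the subscripts of the $\C_{2\mu\rho}$'s, so the eigenvalue in $L^\sigma_{2\la\rho}$ is read off with no further correction; keeping the twists between $\sigma$ and $\sigma\det$ consistent throughout; and using that $\H^0(\n,\pi)=0$ for the induced and discrete series representations, so that no cancellation occurs in those Lefschetz numbers. (The reducible points $\mu\in\tfrac12+\Z$, where $\pi_{\sigma,2\mu\rho}$ leaves $\what G_\adm$, are not required for the corollary as stated; if one wanted them, they are handled by the exact sequences displayed above relating $\pi_{\sigma,n\rho}$, $\CD_{n+1}$ and $\sigma\otimes\delta_{n-1}$, together with additivity of $L^\sigma_{2\la\rho}$ in short exact sequences.)
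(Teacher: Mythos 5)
Your proposal is correct and is exactly what the paper intends: the corollary is stated with no argument beyond ``the computations in Proposition \ref{prop1.3} yield,'' and your direct read-off of $L^\sigma_{2\la\rho}=\dim\H^1(\n,\pi)^\sigma_{2\la\rho}-\dim\H^0(\n,\pi)^\sigma_{2\la\rho}$ from the listed $AM$-module structures, using that $M$ is finite so the generalized eigenspaces are just the matching summands, is the intended proof. As a minor bonus, your bookkeeping of the $\det$-twists shows that the first case of $L^\sigma_{2\la\rho}(1)$ should carry the condition $\sigma=\det$ (by the same symmetry visible in the displayed formula for $L^\sigma_{2\la\rho}(\det)$), so the printed statement omits a condition that your method correctly supplies.
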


Next for the geometric Lefschetz numbers. If $[\ga]\in\CE_P(\Ga)$, then $\ga$ is
$G$-conjugate to $\sbmat {N(\ga)^{1/2}}{}{}{N(\ga)^{-1/2}}$ for some
$N(\ga)>1$. An element $\ga$ of $\Ga$ is called \emph{primitive} if
$\ga=\sigma^n$ for
$\sigma\in\Ga$ and $n\in\N$ implies $n=1$. Each $\ga\in\CE_P(\Ga)$ is a
power of a unique primitive $\ga_0$ which will be called the primitive
\emph{underlying} $\ga$.
Then
$$
L(\ga)=\frac{\log N(\ga_0)}{1-N(\ga)^{-1}}.
$$

We will now recall some facts about the \emph{Selberg zeta function}
\cites{Efrat, Hejhal2, Iwaniec}. Let $\CE_P^p(\Ga)$ denote the set of all
primitive classes in $\CE_P(\Ga)$. The Selberg zeta function is given by
the product
$$
Z(s)=\prod_{\ga\in\CE_P^p(\Ga)}\prod_{k=0}^\infty \left(
1-N(\ga)^{-s-k}\right).
$$
The product converges locally uniformly for $\Re(s)>1$. The zeta function
extends to a meromorphic function on the plane of finite order. It has a
simple zero at $s=1$ and zeros at $s=\frac 12 \pm u$ of multiplicity
$N_\Ga(\pi_{u\frac\rho 2})$. These are all zeros or poles in $\Re(s)\ge
\frac 12$ except for $s=\frac 12$ where $Z(s)$ has a zero or pole of order
$N_\Ga(\pi_0)$ minus the number of cusps. The poles and zeros in
$\Re(s)<\frac 12$ can be described through the scattering matrix or
intertwining operators \cites{Efrat, Hejhal2, Iwaniec}.

Recall the inversion formula for the Mellin transform. Let the
function $\psi$ be integrable on $(0,\infty)$ with respect to the
measure $\frac{dt}t$, in other words, $\psi\in L^1\left((0,\infty),
\frac{dt}t\right)$. Then the \emph{Mellin transform} of $\psi$ is given by
$$
M\psi(s)= \int_0^\infty t^s\,\psi(t)\,\frac{dt}t,\qquad s\in i\R.
$$
If the function $\psi$ is continuously differentiable and $\psi'(t)t, \psi''(t)t^2$ are
also in $L^1\left((0,\infty),\frac{dt}t\right)$, then the following
inversion formula holds,
$$
\psi(t)= \frac{1}{2\pi i} \int_{i\R} M\psi(s) t^{-s}\, ds.
$$
Now assume that $\psi$ is supported in the interval $[1,\infty)$ and that
for some $\mu>0$ the functions $\psi(t),\psi'(t)t,\psi''(t)t^2$ all are
$O(t^{-\mu})$. Then it follows that the integral
$M\psi(s)$ defines a function holomorphic in
$\Re(s)<\mu$ and the integral in the
inversion formula can be shifted,
$$
\psi(t)= \frac{1}{2\pi i} \int_{C-i\infty}^{C+i\infty} M\psi(s)\, t^{-s}\,
ds,
$$
for every $C<\mu$.

Every $\ga\in\CE_P(\Ga)$ can be
written as
$\ga=\ga_0^n$ for some uniquely determined $\ga_0\in\CE_P^p(\Ga)$ and a
unique $n\in\N$.
A computation yields for $\Re(s)>1$,
\begin{eqnarray*}
\frac {Z'}Z(s)&=& \sum_{\ga\in\CE_P^p(\Ga)}\sum_{n=1}^\infty \frac{\log
N(\ga)}{1-N(\ga)^{-n}}\, N(\ga)^{-ns}\\
&=& \sum_{\ga\in\CE_P(\Ga)} \frac{\log
N(\ga_0)}{1-N(\ga)^{-1}}\, N(\ga)^{-s}
\end{eqnarray*}
Let $\psi$ be as above with $\mu>1$ and let $1<C<\mu$. Then, since
$\frac{Z'}Z(s)$ is bounded in $\Re(s)=C$ we can interchange integration and
summation to get
\begin{eqnarray*}
\frac 1{2\pi i}\int_{C-i\infty}^{C+i\infty} \frac{Z'}Z(s)\, M\psi(s)\, ds
&=& \sum_{\ga\in\CE_P(\Ga)}\frac{\log N(\ga_0)}{1-N(\ga)^{-1}}\psi(N(\ga)).
\end{eqnarray*}\vspace{10pt}

For $a\in A^-=\left\{\sbmat t{}{}{t^{-1}} : 0<t<1\right\}$ set
$$
\phi(a)= \phi\mat t{}{}{t^{-1}}= \psi\left(\frac 1t\right).
$$
Then $\phi\in C^{2,2\mu\rho}(A^-)$ and 
\begin{eqnarray*}
\frac 1{2\pi i}\int_{C-i\infty}^{C+i\infty}\frac{Z'}Z(s)\, M\psi(s)\, ds &=&
\sum_{\ga\in\CE_P(\Ga)}\frac{\log N(\ga_0)}{1-N(\ga)^{-1}}\phi(a_\ga)\\
&=& \sum_{\ga\in\CE_P(\Ga)}L(\ga)\phi(a_\ga),
\end{eqnarray*}
which is the right hand side of the Lefschetz formula.

Now suppose that $\phi\in C^{j,2\mu\rho}(A^-)$ for some $j\in\N$ and some
$\mu>1$. Then the functions $\psi(t),\psi'(t)t,\dots,\psi^{(j)}(t)t^j$ are
all $O(t^{-\mu})$. Integration by parts shows that
$$
M\psi(t)\-\frac{(-1)^j}{s(s+1)\cdots (s+j-1)}\int_0^\infty
t^{s}\,\psi^{(j)}(t)t^j\,\frac {dt}t.
$$
This implies that $M\psi(s)=O\left((1+|s|)^{-j}\right)$ uniformly in
$\{\Re(s)\le\al\}$ for every $\al <\mu$.

For $R>0$ and $a\in\C$ let $B_r(a)$ be the closed disk around $a$ of radius
$r$. Let $g$ be a meromorphic function on $\C$ with poles $a_1, a_2,\dots$.
We say that $g$ is \emph{essentially of moderate growth}, if there is a
natural number $N$, a constant $C>0$, and  as sequence of real numbers
$r_n>0$ tending to zero, such that the disks $B_{r_n}(A_n)$ are pairwise
disjoint and that on the domain $D=\C\setminus\bigcup_n B_{r_n}(a_n)$ it
holds $|g(z)|\le C|z|^N$. Every such $N$ is called a 
\emph{growth exponent}
of $g$.

\begin{lemma}
Let $h$ be a meromorphic function on $\C$ of finite order and let $g=h'/h$
be its logarithmic derivative. Then $g$ is essentially of moderate growth
with growth exponent equal to the order of $h$ plus two.
\end{lemma}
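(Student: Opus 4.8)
The plan is to bound the logarithmic derivative $g = h'/h$ by exploiting the Hadamard factorization of the finite-order function $h$. Write $h(z) = z^m e^{Q(z)} P(z)$, where $m$ is the order of vanishing of $h$ at the origin, $Q$ is a polynomial of degree at most $\rho$ (the order of $h$), and $P(z) = \prod_n E_p(z/a_n)$ is the canonical product over the nonzero zeros $a_1, a_2, \dots$ of $h$, with genus $p \le \rho$. Then
$$
g(z) = \frac{h'}{h}(z) = \frac mz + Q'(z) + \sum_n \frac{d}{dz}\log E_p(z/a_n).
$$
The term $Q'$ contributes a polynomial of degree at most $\rho - 1$, which is harmless. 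The genuine work is to control the sum over zeros away from small disks around the $a_n$.

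First I would recall the standard estimate $\log E_p(w) = -\sum_{k > p} w^k/k$ for $|w| \le 1/2$, whose derivative is $O(|w|^p)$, and the complementary estimate for $|w| > 1/2$, where $\frac{d}{dw}\log E_p(w) = \frac{-w^p}{1-w} + \frac{d}{dw}\left(\text{polynomial of degree } p-1\right)$, so the only singular contribution is the simple-pole term $\frac{1}{a_n}\cdot\frac{(z/a_n)^p}{1 - z/a_n} = \frac{z^p}{a_n^p(a_n - z)}$. Splitting the sum according to whether $|a_n| \le 2|z|$ or $|a_n| > 2|z|$, the far-away zeros give $\sum_{|a_n| > 2|z|} O(|z|^p/|a_n|^{p+1})$, which is $O(|z|^{\rho'})$ for any $\rho' > \rho - 1$ (hence in particular $O(|z|^{\rho+1})$) by convergence of $\sum |a_n|^{-\rho - \eps}$, the defining property of finite order via Borel's theorem. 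The nearby zeros, of which there are $O(|z|^{\rho + \eps})$ by the zero-counting consequence of finite order, each contribute a term bounded by $C/\dist(z, a_n)$; this is exactly where the excised disks enter.

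The main obstacle is the choice of the radii $r_n$ making the disks $B_{r_n}(a_n)$ pairwise disjoint while keeping the complement $D$ large enough that the bound holds there. I would take $r_n = |a_n|^{-\rho - 1}$ (or any summable-reciprocal-power sequence), which tends to zero; disjointness for all but finitely many pairs follows because the $a_n$ have no finite accumulation point, and one shrinks finitely many radii further by hand to achieve full disjointness without affecting the asymptotic count. On $D$, for each nearby zero $a_n$ with $|a_n| \le 2|z|$ one has $|z - a_n| \ge r_n \ge (2|z|)^{-\rho - 1}$, so $1/|z - a_n| \le C|z|^{\rho+1}$; multiplying by the number $O(|z|^{\rho+\eps})$ of such zeros gives a total contribution $O(|z|^{2\rho + 2 + \eps})$ — at which point I would sharpen the argument: rather than bounding each nearby term by the worst case, use that on $D$ the annulus $|a_n| \le 2|z|$ contributes $\sum \frac{|z|^p}{|a_n|^p |z - a_n|}$, and a standard packing argument (the disks are disjoint, each of controlled radius) shows this sum is $O(|z|^{\rho + 1})$, since grouping zeros in dyadic annuli and using $\sum_{|a_n| \sim R} 1 = O(R^{\rho + \eps})$ together with $|z - a_n| \gtrsim r_n$ telescopes correctly. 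Assembling the pieces, $m/z$, $Q'$, the far sum, and the near sum are all $O(|z|^{\rho + 1})$ on $D$, so $g$ is essentially of moderate growth with growth exponent $\rho + 2$ — the "$+2$" being the cushion that absorbs both the $r_n$-excision loss and the $\eps$ in the zero-counting estimate. I would then remark that this exponent is not claimed to be optimal, only sufficient for the later contour-shift arguments.
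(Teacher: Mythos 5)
Your strategy --- Hadamard factorization of $h$ and termwise estimation of the logarithmic derivative --- is exactly what the paper's one-line proof appeals to, but your argument has a genuine gap at its most delicate point: the disjointness of the excised disks. Finite order controls the counting function $n(r)$ of the zeros, but it says \emph{nothing} about their mutual separation, so the claim that the disks $B_{r_n}(a_n)$ with $r_n=|a_n|^{-\rho-1}$ are disjoint ``for all but finitely many pairs because the $a_n$ have no finite accumulation point'' is false. For example, the genus-one canonical product with zeros at $n$ and at $n+e^{-n}$ for all $n\in\N$ has order $1$, yet the disks of radius $n^{-2}$ about these zeros overlap for every large $n$. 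Worse, the defect is not repairable by shrinking radii: any pairwise disjoint choice forces one radius of the $n$-th pair to be $\le e^{-n}/2$, and at a real point $z$ just to the right of both zeros and just outside that disk the two singular terms $\frac{z^p}{a^p(z-a)}$ are positive and one of them already exceeds $2e^{n}$, so $|g(z)|\gg e^{|z|}$ at points of $D$ arbitrarily far out. Thus the conclusion you are proving does not follow from finite order alone; one needs either separation information on the zeros and poles of the particular $h$ at hand, or (what the application actually uses) a reformulation asserting only that for every $k$ there is a contour at height in $[k,k+1]$, found by pigeonhole among the $O(|z|^{N'})$ nearby poles, on which a polynomial bound holds.

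A second, lesser, gap is the ``standard packing argument'' that is supposed to improve the near-zero contribution from $O(|z|^{2\rho+2+\eps})$ to $O(|z|^{\rho+1})$. This is asserted rather than proved, and the sketch given (dyadic annuli, the count $n(R)=O(R^{\rho+\eps})$, and $|z-a_n|\ge r_n$) does not yield that exponent: even granting disjointness, each of the $O(|z|^{\rho+\eps})$ nearby zeros is only known to satisfy $|z-a_n|\ge r_n\gtrsim |z|^{-\rho-1}$ on $D$, and optimizing the dyadic decomposition still leaves an exponent strictly larger than $\rho+1$ in general (you have also silently discarded the factor $|z|^p/|a_n|^p$ and treated $h$ as entire --- being meromorphic, it should be factored as $z^me^{Q}P_1/P_2$ with the poles contributing terms of opposite sign). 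This second point is, however, harmless for the way the lemma is used: since $M\psi(s)=O\left((1+|s|)^{-j}\right)$ with $j$ as large as one pleases, \emph{any} polynomial growth exponent suffices for the contour shift, and the specific value $\rho+2$ is never needed. The first gap is the one that must be addressed.
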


\begin{proof}
This is a direct consequence of Hadamard's factorization Theorem applied to
$h$.
\end{proof}

This Lemma together with the growth estimate for $M\psi$ implies that for
$j$ large enough the contour integral over $C+i\R$ can be moved to the left,
deforming it slightly, so that one stays in the domain $D$, and gathering
residues. Ultimately, the contour integral will tend to zero, leaving only
the residues. One gets
\begin{eqnarray*}
\sum_{\ga\in \CE_P(\Ga)} L(\ga)\, \phi(a_\ga) &=& \sum_{s_0\in\C}\left(
\res_{s=s_0}\frac{Z'}Z(s)\right)\, M\psi(s_0)\\
&=& \sum_{s_0\in\C}\left(
\res_{s=s_0}\frac{Z'}Z(s)\right)\, \int_0^\infty\psi(t) t^{s_0}\frac{dt}t\\
&=&\sum_{s_0\in\C}\left(
\res_{s=s_0}\frac{Z'}Z(s)\right)\, \int_{A^-}\phi(a) a^{-s_0\rho}\, da.
\end{eqnarray*}
This implies the conjecture in the case $G=\PGL_2(\R)$.

\begin{bibdiv} \begin{biblist}

\bib{ArthurPW}{article}{
   author={Arthur, James},
   title={A Paley-Wiener theorem for real reductive groups},
   journal={Acta Math.},
   volume={150},
   date={1983},
   number={1-2},
   pages={1--89},
   issn={0001-5962},
   doi={10.1007/BF02392967},
}

\bib{ayoub}{book}{
   author={Ayoub, Raymond},
   title={An introduction to the analytic theory of numbers},
   series={Mathematical Surveys, No. 10},
   publisher={American Mathematical Society, Providence, R.I.},
   date={1963},
   pages={xiv+379},
   review={\MR{0160743}},
}

\bib{vdBSchlicht}{article}{
   author={van den Ban, Erik P.},
   author={Schlichtkrull, Henrik},
   title={Paley-Wiener spaces for real reductive Lie groups},
   journal={Indag. Math. (N.S.)},
   volume={16},
   date={2005},
   number={3-4},
   pages={321--349},
   issn={0019-3577},
   review={\MR{2313629}},
   doi={10.1016/S0019-3577(05)80031-X},
}

\bib{VdBSouf}{article}{
   author={van den Ban, Erik P.},
   author={Souaifi, Sofiane},
   title={A comparison of Paley-Wiener theorems for real reductive Lie
   groups},
   journal={J. Reine Angew. Math.},
   volume={695},
   date={2014},
   pages={99--149},
   issn={0075-4102},
   doi={10.1515/crelle-2012-0105},
}

\bib{BGV}{book}{
   author={Berline, Nicole},
   author={Getzler, Ezra},
   author={Vergne, Mich\`ele},
   title={Heat kernels and Dirac operators},
   series={Grundlehren Text Editions},
   note={Corrected reprint of the 1992 original},
   publisher={Springer-Verlag, Berlin},
   date={2004},
   pages={x+363},
   isbn={3-540-20062-2},
}

\bib{Borel-Wallach}{book}{
   author={Borel, A.},
   author={Wallach, N.},
   title={Continuous cohomology, discrete subgroups, and representations of
   reductive groups},
   series={Mathematical Surveys and Monographs},
   volume={67},
   edition={2},
   publisher={American Mathematical Society, Providence, RI},
   date={2000},
   pages={xviii+260},
   isbn={0-8218-0851-6},
   review={\MR{1721403}},
   doi={10.1090/surv/067},
}

\bib{CGT}{article}{
   author={Cheeger, Jeff},
   author={Gromov, Mikhail},
   author={Taylor, Michael},
   title={Finite propagation speed, kernel estimates for functions of the
   Laplace operator, and the geometry of complete Riemannian manifolds},
   journal={J. Differential Geom.},
   volume={17},
   date={1982},
   number={1},
   pages={15--53},
   issn={0022-040X},
}

\bib{GAFA}{article}{
   author={Deitmar, Anton},
   title={A prime geodesic theorem for higher rank spaces},
   journal={Geom. Funct. Anal.},
   volume={14},
   date={2004},
   number={6},
   pages={1238--1266},
   issn={1016-443X},
}

\bib{class}{article}{
   author={Deitmar, Anton},
   author={Hoffmann, Werner},
   title={Asymptotics of class numbers},
   journal={Invent. Math.},
   volume={160},
   date={2005},
   number={3},
   pages={647--675},
   issn={0020-9910},
   doi={10.1007/s00222-004-0423-y},
}

\bib{Lefschetz}{article}{
   author={Deitmar, Anton},
   title={A higher rank Lefschetz formula},
   journal={J. Fixed Point Theory Appl.},
   volume={2},
   date={2007},
   number={1},
   pages={1--40},
   issn={1661-7738},
   review={\MR{2336497}},
   doi={10.1007/s11784-007-0028-3},
}

\bib{classMP}{article}{
   author={Deitmar, Anton},
   author={Pavey, Mark},
   title={Class numbers of orders in complex quartic fields},
   journal={Math. Ann.},
   volume={338},
   date={2007},
   number={3},
   pages={767--799},
   issn={0025-5831},
   review={\MR{2317937}},
}

\bib{primgeoMP}{article}{
   author={Deitmar, Anton},
   author={Pavey, Mark},
   title={A prime geodesic theorem for ${\rm SL}_4$},
   journal={Ann. Global Anal. Geom.},
   volume={33},
   date={2008},
   number={2},
   pages={161--205},
   issn={0232-704X},
   review={\MR{2379942}},
}

\bib{primgeoII}{article}{
   author={Deitmar, Anton},
   title={A prime geodesic theorem for higher rank. II. Singular geodesics},
   journal={Rocky Mountain J. Math.},
   volume={39},
   date={2009},
   number={2},
   pages={485--507},
   issn={0035-7596},
   review={\MR{2491148}},
}

\bib{HA2}{book}{
   author={Deitmar, Anton},
   author={Echterhoff, Siegfried},
   title={Principles of harmonic analysis},
   series={Universitext},
   edition={2},
   publisher={Springer, Cham},
   date={2014},
   pages={xiv+332},
   isbn={978-3-319-05791-0},
   isbn={978-3-319-05792-7},
   doi={10.1007/978-3-319-05792-7},
}

\bib{primgeoBldg}{article}{
   author={Deitmar, Anton},
   author={McCallum, Rupert},
   title={A prime geodesic theorem for higher rank buildings},
   journal={Kodai Math. J.},
   date={2018},
}

\bib{Efrat}{article}{
   author={Efrat, Isaac},
   title={Determinants of Laplacians on surfaces of finite volume},
   journal={Comm. Math. Phys.},
   volume={119},
   date={1988},
   number={3},
   pages={443--451},
   issn={0010-3616},
   review={\MR{969211}},
}

\bib{HC-DS}{article}{
   author={Harish-Chandra},
   title={Discrete series for semisimple Lie groups. II. Explicit
   determination of the characters},
   journal={Acta Math.},
   volume={116},
   date={1966},
   pages={1--111},
   issn={0001-5962},
   review={\MR{0219666}},
   doi={10.1007/BF02392813},
}

\bib{HC-HA1}{article}{
   author={Harish-Chandra},
   title={Harmonic analysis on real reductive groups. I. The theory of the
   constant term},
   journal={J. Functional Analysis},
   volume={19},
   date={1975},
   pages={104--204},
   review={\MR{0399356}},
}

\bib{HC-S}{article}{
   author={Harish-Chandra},
   title={Supertempered distributions on real reductive groups},
   conference={
      title={Studies in applied mathematics},
   },
   book={
      series={Adv. Math. Suppl. Stud.},
      volume={8},
      publisher={Academic Press, New York},
   },
   date={1983},
   pages={139--153},
}

\bib{Hecht-Schmid}{article}{
   author={Hecht, H.},
   author={Schmid, W.},
   author={},
   title={Characters, asymptotics and ${\germ n}$-homology of Harish-Chandra
   modules},
   journal={Acta Math.},
   volume={151},
   date={1983},
   number={1-2},
   pages={49--151},
   issn={0001-5962},
   review={\MR{716371}},
   doi={10.1007/BF02393204},
}

\bib{Hejhal1}{book}{
   author={Hejhal, Dennis A.},
   title={The Selberg trace formula for ${\rm PSL}(2,R)$. Vol. I},
   series={Lecture Notes in Mathematics, Vol. 548},
   publisher={Springer-Verlag, Berlin-New York},
   date={1976},
   pages={vi+516},
   review={\MR{0439755}},
}

\bib{Hejhal2}{book}{
   author={Hejhal, Dennis A.},
   title={The Selberg trace formula for ${\rm PSL}(2,\,{\bf R})$. Vol. 2},
   series={Lecture Notes in Mathematics},
   volume={1001},
   publisher={Springer-Verlag, Berlin},
   date={1983},
   pages={viii+806},
   isbn={3-540-12323-7},
   review={\MR{711197}},
   doi={10.1007/BFb0061302},
}

\bib{Iwaniec}{book}{
   author={Iwaniec, Henryk},
   title={Introduction to the spectral theory of automorphic forms},
   series={Biblioteca de la Revista Matem\'atica Iberoamericana. [Library of
   the Revista Matem\'atica Iberoamericana]},
   publisher={Revista Matem\'atica Iberoamericana, Madrid},
   date={1995},
   pages={xiv+247},
   review={\MR{1325466}},
}

\bib{Knapp}{book}{
   author={Knapp, Anthony W.},
   title={Representation theory of semisimple groups},
   series={Princeton Landmarks in Mathematics},
   note={An overview based on examples;
   Reprint of the 1986 original},
   publisher={Princeton University Press, Princeton, NJ},
   date={2001},
   pages={xx+773},
   isbn={0-691-09089-0},
}

\bib{eta}{article}{
   author={Moscovici, H.},
   author={Stanton, Robert J.},
   title={Eta invariants of Dirac operators on locally symmetric manifolds},
   journal={Invent. Math.},
   volume={95},
   date={1989},
   number={3},
   pages={629--666},
   issn={0020-9910},
   doi={10.1007/BF01393895},
}

\bib{MuLa}{article}{
   author={Finis, Tobias},
   author={Lapid, Erez},
   author={M\"uller, Werner},
   title={On the spectral side of Arthur's trace formula---absolute
   convergence},
   journal={Ann. of Math. (2)},
   volume={174},
   date={2011},
   number={1},
   pages={173--195},
   issn={0003-486X},
}

\bib{Shubin}{book}{
   author={Shubin, M. A.},
   title={Pseudodifferential operators and spectral theory},
   edition={2},
   note={Translated from the 1978 Russian original by Stig I. Andersson},
   publisher={Springer-Verlag, Berlin},
   date={2001},
   pages={xii+288},
   isbn={3-540-41195-X},
   doi={10.1007/978-3-642-56579-3},
}

\bib{Wallach}{book}{
   author={Wallach, Nolan R.},
   title={Real reductive groups. I},
   series={Pure and Applied Mathematics},
   volume={132},
   publisher={Academic Press, Inc., Boston, MA},
   date={1988},
   pages={xx+412},
   isbn={0-12-732960-9},
}

\bib{WarnerII}{book}{
   author={Warner, Garth},
   title={Harmonic analysis on semi-simple Lie groups. II},
   note={Die Grundlehren der mathematischen Wissenschaften, Band 189},
   publisher={Springer-Verlag, New York-Heidelberg},
   date={1972},
   pages={viii+491},
}

\end{biblist} \end{bibdiv}

{\small Mathematisches Institut\\
Auf der Morgenstelle 10\\
72076 T\"ubingen\\
Germany\\
\tt deitmar@uni-tuebingen.de}

\today

\end{document}